\renewcommand{\mathcal}[1]{{\mathscr#1}}
\newtheorem{theorem}{Theorem}[section]
\newtheorem{lemma}[theorem]{Lemma}
\newtheorem{prop}[theorem]{Proposition}
\theoremstyle{definition}
\theoremstyle{remark}
\newtheorem{remark}[theorem]{Remark}
\numberwithin{equation}{section}
\newcommand{\R}{{\mathbb R}}
\renewcommand{\leq}{\leqslant}
\renewcommand{\le}{\leqslant}
\renewcommand{\geq}{\geqslant}
\renewcommand{\ge}{\geqslant}
\newcommand{\la}{\lambda}
\newcommand{\oxi}{\overline \xi}
\newcommand{\uxi}{\underline \xi}
\newcommand{\ozeta}{\overline \zeta}
\newcommand{\uzeta}{\underline \zeta}
\newcommand{\ophi}{\overline{\phi}}
\newcommand{\uphi}{\underline{\phi}}
\newcommand{\oophi}{\overline{\overline{\phi}}}
\newcommand{\oT}{\overline{T}}
\newcommand{\oh}{\overline{h}}
\newcommand{\oZ}{\overline{Z}}
\newcommand{\oc}{\overline{c}}
\newcommand{\uU}{\underline U}
\newcommand{\oU}{\overline U}
\renewcommand{\epsilon}{\varepsilon }
\newlength{\defbaselineskip}
\newcommand{\setlinespacing}[1]
           {\setlength{\baselineskip}{#1 \defbaselineskip}}
\begin{document}
\title[Doubling Nodal Solutions  with Maximal Rank]{Doubling Nodal Solutions to the Yamabe
Equation in $\R^n$ with maximal rank}

\author[M. Medina]{Maria Medina}
\address[Mar\'ia Medina]{Departamento de Matem\'aticas,
Universidad Aut\'onoma de Madrid,
Ciudad Universitaria de Cantoblanco,
28049 Madrid, Spain}
\email{maria.medina@uam.es}

\author[M. Musso]{Monica Musso}
\address[Monica Musso]{Department of Mathematical Sciences, University of Bath,
 Bath BA2 7AY, United Kingdom}
\email{m.musso@bath.ac.uk}

\thanks{The first author was partially supported by the European Union's Horizon $2020$ research and innovation programme under the Marie Sklodowska-Curie grant agreement N $754446$ and UGR Research and Knowledge Transfer Found - Athenea3i, and by Project PDI2019-110712GB-100, MICINN, Spain. The second author is supported by
 EPSRC Research Grant EP/T008458/1.}

\medskip

\begin{abstract}
We construct a new family of entire solutions to the Yamabe equation
$$-\Delta u=\frac{n(n-2)}{4}|u|^{\frac{4}{n-2}}u \mbox{ in }\mathcal{D}^{1,2}(\R^n).$$
If $n=3$ our solutions have maximal rank, being the first example in odd dimension. Our construction has analogies with the doubling of the equatorial spheres in the construction of  minimal surfaces in $S^3(1)$.

\end{abstract}

\maketitle


\section{Introduction}

\noindent Consider the problem
\begin{equation}\label{prob}
-\Delta u=\gamma |u|^{p-1}u\hbox{ in }\mathbb{R}^n,\qquad \gamma:=\frac{n(n-2)}{4},\qquad u\in\mathcal{D}^{1,2}(\R^n),
\end{equation}
where $n\geq 3$, $p:=\frac{n+2}{n-2}$ and $\mathcal{D}^{1,2}(\R^n)$ is the completion of $C_0^\infty(\R^n)$ with the norm $\|\nabla u\|_{L^2(\R^n)}$. Problem \eqref{prob} corresponds to  the steady state of the energy-critical focusing nonlinear wave equation
\begin{equation*}
\label{2m}
\partial_t^2 u-\Delta u- |u|^{\frac{4}{n-2}} u=0, \ (t,x)\in \R \times \R^n,
\end{equation*}
whose study (see for instance \cite{DJKM, DKM2, DKM3,  KM2, KST}) naturally relies on the complete classification of the set of non-zero finite energy solutions to  \eqref{prob}, which is defined by
\begin{equation*}
\label{3m}
\Sigma:= \left\{ Q \in {\mathcal D}^{1,2} (\R^n) \backslash \{0\}: \ -\Delta Q= \frac{n(n-2)}{4} |Q|^{\frac{4}{n-2}} Q\right \},
\end{equation*}
in particular
in connection with the {\it soliton resolution} conjecture for which only a few examples have become known \cite{KM1,KM2,DKM,DKM2,DKM3}. Observe that \eqref{prob} is the Euler-Lagrange equation of the functional defined by
\begin{equation}\label{energy}
{\bf e} (u) := {1\over 2} \int_{\R^n} |\nabla u |^2 \, dx - {\gamma (n-2) \over 2 n} \int_{\R^n} |u|^{2n \over n-2} \, dx.
\end{equation}
 Positive solutions to \eqref{prob} solve the Yamabe problem on the sphere (after a stereographic projection) and are the  extremal  functions for the  Sobolev embedding. Thanks to the classical work of Caffarelli-Gidas-Spruck \cite{CGS}, it is known that all positive solutions to (\ref{prob}) are given by the so called {\it bubble} and all its possible translations and dilations, that is,
\begin{equation}\label{bubble}
U(y):=\left(\frac{2}{1+|y|^2}\right)^{\frac{n-2}{2}} \mbox{ and }\;\;
U_{\alpha,y_0}(y):=\alpha^{-\frac{n-2}{2}}U\left(\frac{y-y_0}{\alpha}\right),\;\; \alpha>0,\; y_0\in\mathbb{R}^n,
\end{equation}
previously discovered independently by Aubin \cite{Au} and Talenti \cite{Ta}. In fact, all radial solutions in $\Sigma$ have the form \eqref{bubble}.
Sign-changing solutions belonging to $\Sigma$ have thus to be non radial. Using the classical theory of Ljusternik-Schnirelman  category, Ding proved in \cite{D} the existence of infinitely many elements in  $\Sigma$  that are non radial, sign-changing, and with arbitrary large energy.
The key idea in \cite{D} is to look for solutions to \eqref{prob} that
are invariant under the action of $O(2) \times O(n-2) \subset O(n)$ to recover  compactness for the functional ${\bf e} (u)$.
No further information though is known on the solutions found by Ding. Recently, more explicit constructions for  sign-changing (non radial) solutions to \eqref{prob} have been obtained by del Pino-Musso-Pacard-Pistoia and Medina-Musso-Wei (see \cite{dPMPP, dPMPP2, MMW}).
The solutions obtained in \cite{dPMPP} are invariant under the action of $D_k \times O(n-2)$, where $D_k$ is the dihedral group of rotations and reflections leaving a regular polygon with $k$ sides invariant.
More precisely, for any $k$ large enough, the authors construct a solution to \eqref{prob}  looking like the {\it bubble} $U$ in \eqref{bubble} surrounded by $k$ negative scaled copies of $U$ arranged along the vertices of a $k$-regular polygon in $\R^2$. At main order the solution looks like
\begin{equation}\label{circle}
U(y)-\sum_{j=1}^k\lambda^{-\frac{n-2}{2}}U\left(\lambda^{-1}(y-\xi_j)\right),
\end{equation}
where $\xi_j:=(e^{\frac{2\pi (j-1)i}{k}},0,\ldots)$, $\lambda=O(k^{-2})$ if $n\geq 4$ and $\lambda=O((k\ln k)^{-2})$ if $n=3$, as $k \to \infty$.
Observe that
$$
\sum_{j=1}^k \lambda^{-\frac{n-2}{2}}U\left(\lambda^{-1}(y-\xi_j)\right) \rightharpoonup c_n \delta_\Gamma, \quad {\mbox {as}} \quad k \to \infty,
$$
for a positive constant $c_n$, where $\delta_\Gamma$ is the Dirac-delta at the equatorial on the $(y_1 , y_2)$-plane $\Gamma =\{ y \in \R^n \, : \, y_1^2 + y_2^2=1 \}$ in  $S^n(1)$. We thus can think of the solutions obtained in \cite{dPMPP} and  described at main order in \eqref{circle} as the sum of a positive fixed central bubble surrounded by a negative smooth function that desingularizes a Dirac-delta along the equatorial $\Gamma$, in the limit as $k \to \infty$. We call this construction a {\it desingularization of the equatorial}, in analogy with similar desingularization constructions for
minimal surfaces in Riemannian three-manifolds \cite{K}.
We remark that these solutions are not
invariant under the action of $O(2) \times O(n-2)$, thus they differ from the ones found by Ding.
Besides, these solutions are the first example of non degenerate sign-changing  solutions to \eqref{prob} as shown in \cite{MW}.
For any $Q \in \Sigma$, consider the linear operator $L_Q:=-\Delta -\gamma p |Q|^{p-1}$ and define the null space
$$
{\mathcal Z}_Q := \{ f \in {\mathcal D}^{1,2} (\R^n) \, : \, f \not=0, \, L_Q (f) = 0\}.
$$
Duyckaerts-Kenig-Merle \cite{DKM} introduced the following definition of non-degeneracy for a solution of problem \eqref{prob}: $Q\in \Sigma$ is said to be {\em non degenerate} if ${\mathcal Z}_Q$ coincides with the vector space generated by the elements in  ${\mathcal Z}_Q$ related to the group of isometries in
${\mathcal D}^{1,2} (\R^n)$ under which problem \eqref{prob} is invariant, given by translations, scalings, rotations and Kelvin transformation.
More precisely, $Q$ is non degenerate if
$$
{\mathcal Z}_Q = \tilde{{\mathcal Z}}_{Q},
$$
where
$$
\tilde{{\mathcal Z}}_{Q}:= \mbox{span} \ \left\{ \begin{array}{l}
(2-n) x_j Q +|x|^2 \partial_{x_j} Q- 2 x_j x \cdot \nabla Q, \quad  \partial_{x_j} Q, \quad 1\leq j\leq n, \\
\\
(x_j \partial_{x_k} - x_k \partial_{x_j})Q, \quad 1\leq j <k\leq n, \quad \frac{n-2}{2} Q + x \cdot Q \end{array}
\right\}.
$$
The rank of a solution $Q \in \Sigma$ is the dimension of the vector space $\tilde {\mathcal Z}_Q$, and it cannot exceed the number
$$\mathcal{N}:=2n+1+\frac{n(n-1)}{2},$$
being this the largest possible dimension for $\tilde {\mathcal Z}_Q$.
It is well known for instance that
$U$ in \eqref{bubble} is non degenerate and its rank is $n+1$ (see \cite{rey}).
In \cite{MW} it is proven that the solutions built in \cite{dPMPP}, looking  at main order as in  \eqref{circle}, are non degenerate and their rank is $3n$.

A question we address in \cite{MMW} is about the existence of solutions $Q \in \Sigma$ to \eqref{prob} whose rank is maximal.  In fact, observe that not the bubbles in \eqref{bubble} nor the solutions built in \cite{dPMPP} have maximal rank. We partially answer this question building a  new family of solutions to \eqref{prob}, where another polygon with a large number of sides is replicated for $n\geq 4$ in the third and fourth coordinates, giving rise to a new family of non degenerate solutions to \eqref{prob} that at main order look like
\begin{equation}\label{circle2}
U(y)-\sum_{j=1}^k\lambda^{-\frac{n-2}{2}}U(\lambda^{-1}(y-\oxi_j))-\sum_{j=1}^h\mu^{-\frac{n-2}{2}}U(\mu^{-1}(y-\uxi_j)),
\end{equation}
where $\oxi_j:=(e^{\frac{2\pi (j-1)i}{k}},0,\ldots)$, $\uxi_j:=(0,0, e^{\frac{2\pi (j-1)i}{h}},0,\ldots)$, $\lambda=O(k^{-2})$ and $\mu=O(h^{-2})$, for $k$ and $h$ sufficiently large, see \cite{MMW}. Using the terminology we introduced before,
this construction is a {\it desingularization of the two equators}, one in the $(y_1, y_2)$-plane and the other in the $(y_3 , y_4)$-plane. These solutions are non degenerate. Furthermore,
their rank is $5 \, (n-1)$, which is {\it maximal} when $n=4$.  A generalization of this result is to consider  non degenerate solutions obtained by gluing a central positive bubble with  negative  scaled copies of  $U$ centered at the vertices of $\ell$ regular polygons with a
large number of sides and lying in consecutive planes, which have maximal rank  provided the dimension $n$ is  $2\ell$. In other words, a {\it desingularization of $\ell$ equators} in consecutive planes would provide an example of non degenerate solutions with maximal rank in any even dimension $2\ell$. If the dimension $n$ is odd, the existence of non degenerate solutions for \eqref{prob} with maximal rank  remains an open problem and a  different construction is required.

Roughly speaking, the solution built in \cite{MMW} {\it breaks} the radial behavior of the bubble in the first four coordinates, loosing the related invariances. This fact adds extra terms in the kernel of the linearized operator, being {\it all the possible} precisely when $n=4$. Thus, to prove the analogue in odd dimension one needs to find a solution {\it breaking the radiality} in an odd number of coordinates.

The aim of this work is to address this question and to provide a new family of sign-changing solutions for problem \eqref{prob} that we claim to have maximal rank  in dimension $n=3$. We prove the following result.

\begin{theorem}\label{theo}
Let $n\geq 3$ and let $k$ be a positive integer. Then for any sufficiently large $k$ there is a finite energy solution to \eqref{prob} of the form
$$u(y)=U(y)-\sum_{j=1}^k\lambda^{-\frac{n-2}{2}}U(\lambda^{-1}(y-\oxi_j))-\sum_{j=1}^k\lambda^{-\frac{n-2}{2}}U(\lambda^{-1}(y-\uxi_j))+o_k(1)(1+\lambda^{-\frac{n-2}{2}}),$$
where
$$\oxi_j:=R(\sqrt{1-\tau^2}e^{\frac{2\pi(j-1)}{k}i},\tau,0,\ldots),\; \;\uxi_j:=R(\sqrt{1-\tau^2}e^{\frac{2\pi(j-1)}{k}i},-\tau,0,\ldots), \;\;j=1,\ldots,k,$$
and
\begin{equation*}\begin{split}
\la := {\ell^{2\over n-2} \over k^2} ,  \; \tau := {t \over k^{1-{2\over n-1}} },  \;{\mbox {if}} \quad n\geq 4, \qquad
\la := {\ell^{2} \over k^2 (\ln k)^2 } , \; \tau := {t \over \sqrt{\ln k} },  \; {\mbox {if}} \quad n=3.
\end{split}
\end{equation*}
Here
$$\lambda^2+R^2=1, \quad {\mbox {and}} \quad  \eta <  \ell , \, t < \eta^{-1},
 $$
 for some positive fixed number $\eta$ independent of $k$.  The term $o_k(1)\rightarrow 0$ uniformly on compact sets of $\R^n$ as  $k \to \infty$.

 These solutions  have maximal rank in dimension $3$.
\end{theorem}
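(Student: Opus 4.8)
The plan is to construct $u$ by a Lyapunov--Schmidt reduction performed inside a space of functions invariant under a large symmetry group. As approximate solution take
\[
W=W_{\ell,t}:=U-\sum_{j=1}^k U_{\lambda,\oxi_j}-\sum_{j=1}^k U_{\lambda,\uxi_j},
\]
with $\lambda,\oxi_j,\uxi_j$ as in the statement, to which in dimension $3$ one adds a fixed lower order correction (built from $U$ and the profiles $\partial_\alpha U$) in order to reduce the size of the error. Fix once and for all the group $G:=D_k\times\mathbb{Z}_2\times O(n-3)$, where $D_k$ acts on the $(y_1,y_2)$--plane together with the cyclic relabelling $j\mapsto j+1$ of the concentration points, the $\mathbb{Z}_2$ factor is the reflection $y_3\mapsto -y_3$ interchanging the two rings, and $O(n-3)$ acts on $(y_4,\dots,y_n)$. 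Working in the $G$--invariant class, the central bubble is rigid, the radius $R=\sqrt{1-\lambda^2}$ is slaved to $\lambda$, and the only free parameters are the two scalars $\ell$ and $t$, to be matched against the two reduced equations coming from the scaling mode and the $y_3$--translation mode of the rings. Note that $|\oxi_j|=|\uxi_j|=R$ regardless of $\tau$, so that the bubble/central--bubble interactions do not depend on $t$ at leading order and the $t$--balance is governed entirely by the interactions among the ring bubbles.

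The first step is the error estimate together with the linear theory. I would estimate $S(W):=-\Delta W-\gamma|W|^{p-1}W$ in a weighted norm adapted to the $2k+1$ concentration points, tracking the interaction terms: bubble/central--bubble, of size $O(\lambda^{(n-2)/2})$; neighbouring bubbles on the same ring, whose centres sit at mutual distance $\simeq 2\pi R\sqrt{1-\tau^2}/k$; and the two rings against each other, the matching bubbles being at distance $2R\tau$. Then one shows that the linearized operator $L_W:=-\Delta-\gamma p|W|^{p-1}$, restricted to $G$--invariant functions $L^2$--orthogonal to the symmetry--reduced approximate kernel spanned by the $\partial_\alpha U_{\lambda,\oxi_j}$, the $\partial_\alpha U_{\lambda,\uxi_j}$ and the associated dilation modes, is invertible with a uniform bound on the inverse (up to a power of $\ln k$ when $n=3$), uniformly in $(\ell,t)$ in the admissible range. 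This is the usual contradiction/blow--up argument: from a hypothetical sequence realizing an unbounded inverse, rescale around each concentration point and pass to the limit, obtaining an element of $\ker L_U$ which the orthogonality plus the nondegeneracy of $U$ forces to be zero; away from all concentration points the limit is a harmonic function in $\mathcal D^{1,2}(\R^n)$, hence zero; contradiction. With this in hand, the projected nonlinear problem $L_W\phi=-S(W)-N(\phi)+\sum_i c_iZ_i$ is solved by contraction mapping, giving $\phi=\phi_{\ell,t}$ with $\|\phi_{\ell,t}\|=o_k(1)$ in the weighted norm --- which is exactly what the remainder $o_k(1)(1+\lambda^{-(n-2)/2})$ in the statement records.

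The core is then the two--dimensional reduced problem: choose $(\ell,t)$ so that all the $c_i$ vanish, equivalently so that $(\ell,t)$ is a critical point of $\mathcal F(\ell,t):={\bf e}(W_{\ell,t}+\phi_{\ell,t})$. Expanding the energy gives $\mathcal F(\ell,t)=(2k+1)\,{\bf e}(U)+\Psi_k(\ell,t)+(\text{lower order})$, with a reduced potential $\Psi_k$ collecting the dominant interactions. The new feature relative to the single--ring construction of \cite{dPMPP} is the dependence on $t$: the interaction between the two rings is attractive (they carry the same sign) and favours small $t$, while other terms in $\Psi_k$ --- notably those produced by the $t$--dependent contraction of each ring --- push in the opposite direction; the balance singles out a nondegenerate critical point $(\ell_*,t_*)$ of $\Psi_k$ with $\eta<\ell_*$ and $t_*<\eta^{-1}$, which is then located and perturbed to a true zero of the reduced map. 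In dimension $3$ the relevant interaction terms are only logarithmic in size, which is precisely why one renormalizes $\lambda=\ell^2/(k\ln k)^2$ and $\tau=t/\sqrt{\ln k}$ so that $\Psi_k$ tends to a nontrivial limiting potential; showing that this limit has a nondegenerate critical point is, I expect, the main analytic obstacle, and the tension between breaking radiality in an odd number of coordinates and retaining a usable symmetry group is what restricts the maximal--rank conclusion to $n=3$.

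Finally, maximal rank when $n=3$ means the $\mathcal N=10$ generators of $\tilde{\mathcal Z}_u$ --- the three translations $\partial_{y_j}u$, the three rotations $(y_j\partial_{y_k}-y_k\partial_{y_j})u$, the three special conformal fields, and the dilation field --- are linearly independent in $\mathcal D^{1,2}(\R^3)$. Near a ring bubble each of these is, to leading order, a rescaled linear combination of $\partial_1 U,\partial_2 U,\partial_3 U$ with coefficients depending on the angle $\theta_i$ of that bubble; decomposing in Fourier modes along the ring one finds four distinct angular frequencies occurring, and the $\{\partial_1,\partial_2\}$--sector separates from the $\partial_3$--sector, which already pins down most of the coefficients in a hypothetical linear relation. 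The few generators whose leading ring profiles coincide (such as $\partial_{y_3}u$ and one of the special conformal fields) are then separated by inspecting their profiles near the central bubble, where $u$ is radial to leading order. Putting these together forces all ten coefficients to vanish, so $\dim\tilde{\mathcal Z}_u=10=\mathcal N$, the maximal value. (The same blow--up analysis used for the linear theory, run now in the full non--symmetric space, is expected to yield in addition that $\mathcal Z_u=\tilde{\mathcal Z}_u$, i.e.\ that $u$ is nondegenerate; only the rank statement is asserted in the theorem.)
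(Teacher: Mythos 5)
Your scaffolding (symmetric ansatz, linear theory modulo the approximate kernel, two reduced equations in $(\ell,t)$) coincides with the paper's general strategy, but the proposal omits the one idea the proof actually hinges on, and without it the reduction step does not close. The equation that fixes $t$ is the projection onto the $y_3$--translation mode $\oZ_3$ of the first ring bubble, and its leading term is tiny: of order $k^{-(n+1-\frac{2}{n-1})}$ for $n\geq 4$ (see \eqref{systLT}), because the dominant part of the error near $\oxi_1$ is even in $y_3-\tau$ and is annihilated by the odd function $Z_3$ (this is the cancellation \eqref{1}). The standard Lyapunov--Schmidt estimates you invoke give $\|\oophi_1\|_*\lesssim k^{-n/q}$ with $\frac n2<q<\frac{n}{2-\frac{2}{n-1}}$, hence a bound of order $\la^{\frac{n-2}{2}}\|\oophi_1\|_*\sim k^{-(n-2)-\frac nq}$ for the projected nonlinear and coupling terms; since $\frac nq<2<3-\frac{2}{n-1}$, this is \emph{always larger} than the leading term, so you cannot conclude that the bracket $\left[d_n\frac{\ell}{t^{n-1}}-1\right]$ must vanish. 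The same obstruction reappears in your variational formulation: the $t$--dependent part of the reduced energy is smaller than the standard bounds on the discrepancy between $\mathcal F(\ell,t)$ and ${\bf e}(W_{\ell,t})$, so asserting that the "lower order" terms do not destroy the critical point in $t$ is precisely the unproved step. Note also that the global reflection $y_3\mapsto-y_3$ in your symmetry group does not help here; what is needed is the \emph{approximate} evenness about the plane $y_3=\tau$ of the upper ring. The paper resolves this by splitting the interior error (Proposition \ref{symErrInt}), the interior correction $\ophi_1$ (Proposition \ref{estSimPart}) and the outer function $\psi$ (Proposition \ref{symEstPsi}) into a relatively large part that is even in $y_3$ around the ring plane --- hence orthogonal to $\oZ_3$ --- plus a genuinely smaller non-symmetric part with quantitative pointwise bounds; only then is the projected nonlinearity of order $k^{-\alpha}$ with $\alpha>n+1-\frac{2}{n-1}$ (Claim 6), and the fixed point in $(\ell,t)$ can be carried out. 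Your proposal contains no analogue of this decomposition, so as written the reduced problem cannot be solved.

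Two secondary points. First, no correction term is added to the ansatz in dimension $3$; the same approximation \eqref{ansatz} is used for all $n\geq3$, with the logarithmic scalings absorbed into \eqref{par1}. Second, for the maximal rank statement the paper does not argue via Fourier modes along the ring: it lists the $4n-2$ kernel elements $z_0,\dots,z_{4n-3}$ of Remark \ref{li} and proves their linear independence (Appendix B) by evaluating a vanishing linear combination at suitably chosen points, exploiting the decay rates at infinity, the invariance under $y_j\mapsto-y_j$ and under rotation by $\frac{2\pi}{k}$, and the non-radiality of $u$ in the relevant planes; this argument is carried out for $k$ even, a restriction your sketch does not address. Your mode-separation idea is plausible but would have to be made quantitative in the same way (in particular to separate $\partial_{y_3}u$ from the third special conformal field), so the rank part of your proposal is an outline rather than a proof.
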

Some remarks are in order.

\begin{remark}
Let us briefly discuss our construction.
The solution predicted by Theorem \ref{theo} looks at main order as
$$
u_0(y) := U(y)-\sum_{j=1}^k\lambda^{-\frac{n-2}{2}}U(\lambda^{-1}(y-\overline \xi_j))-\sum_{j=1}^k\lambda^{-\frac{n-2}{2}}U(\lambda^{-1}(y-\underline \xi_j)).
$$ 
The polygonal distribution of the points  $\overline \xi_1, \ldots , \overline \xi_k$ and 
$\underline \xi_1, \ldots \underline \xi_k$ makes $u_0$ a function with several important symmetries: it is invariant under rotation of angle ${2\pi \over k}$ in the $(y_1, y_2)$-plane, and even in the other variables $y_j$, $j=3, \ldots , n$. The assumption that $\lambda^2 + R^2=1$ gives that $u_0$ is also invariant under Kelvin transformation. We will take great advantage of these symmetries in many different ways in our proof. For instance they allow us to choose the same scaling factor $\lambda$ for each one of the negative bubbles centred at the different points  $\overline \xi_j$ and $\underline \xi_j$, $j=1, \ldots , k$, which in principle may not be the same, reducing substantially the number of scaling parameters to adjust.
Taking $\lambda$ and $\tau$ small positive parameters as $k \to \infty$, a formal computation shows that the energy functional defined in \eqref{energy} and evaluated at $u=u_0$ has the following expansion, as $k \to \infty$,
\begin{align*}
{\bf e} (u_0)& \sim (2k +1) a_n + k \left(  \lambda^{n-2 \over 2} (b_n- c_n \tau^2 ) - d_n \lambda^{n-2} k^{n-2} - e_n { \la^{n-2} \over \tau^{n-3}} \right) ,
\end{align*}
when dimension $n\geq 4$, for some explicit positive constants $a_n$, $b_n$, $c_n$, $d_n$ and $e_n$. Our choice for $\lambda$ and $\tau$ in terms of $k$ is to get at main order the balance
$$
\nabla_{\lambda , \tau} {\bf e} (u_0) \sim 0.
$$
We will justify this heuristic argument in Section 2.
\end{remark}

\begin{remark}
The construction obtained in Theorem \ref{theo} differs from the {\it desingularization of the equatorial} in \eqref{circle} obtained in \cite{dPMPP} or of two equators in \eqref{circle2} in \cite{MMW}.
In fact the solutions in Theorem \ref{theo} can be thought as the sum of a positive fixed central bubble surrounded by a negative smooth function that desingularizes  Dirac-deltas located in points on two circles that are collapsing into a Dirac-delta supported along the equatorial $\Gamma$, in the limit as $k \to \infty$. We call this construction a {\it doubling of the equatorial} in the $(y_1 , y_2)$-plane, in analogy with similar doubling constructions for
minimal surfaces in Riemannian three-manifolds obtained in \cite{K,K1,K2}.
\end{remark}
\begin{remark}\label{li} Let $u$ be the solution predicted in Theorem \ref{theo} and define
the following $4n-2$ functions
$$z_0(y):=\frac{n-2}{2}u(y)+\nabla u(y)\cdot y,\quad z_\alpha(y):=\frac{\partial}{\partial y_\alpha}u(y),\;\alpha=1,\ldots,n,$$
$$z_{n+1}(y):=-2y_1z_0(y)+|y|^2z_1(y),\quad z_{n+2}(y):=-2y_2z_0(y)+|y|^2z_2(y),$$
$$ z_{n+3}(y):=-2y_3z_0(y)+|y|^2z_3(y),$$
$$z_{n+\alpha+2}(y):=-y_\alpha z_1(y)+y_1z_\alpha(y),\;\alpha=2,\ldots, n,$$
$$z_{2n+\alpha}(y):=-y_\alpha z_2(y)+y_2z_\alpha(y),\;\alpha=3,\ldots, n,$$
$$z_{3n+\alpha-3}(y):=-y_\alpha z_3(y)+y_3z_\alpha(y),\;\alpha=4,\ldots, n.$$
These functions belong to $\mathcal{D}^{1,2}(\R^n)$ and also
 to the kernel of $L_u:=-\Delta-\gamma p |u|^{p-1}$. Furthermore, if $k$ is even they are linearly independent (see Appendix B). Consider now the case $n=3$: we have that $4n-2=\mathcal{N}=10$. Thus the solutions constructed in Theorem \ref{theo} have maximal rank in dimension $3$ when $k$ is even. The non-degeneracy of the solution remains an open problem.
\end{remark}
\begin{remark}
The {\it doubling of the equatorial} in $\R^3$ presumably closes the question about solutions with maximal rank. Indeed, for any odd dimension we could combine a {\it doubling} of the equatorial in three coordinates with a {\it desingularization} of the equatorial as in \cite{dPMPP}  in two coordinates or a {\it desingularization} of two equators as in \cite{MMW} in four coordinates, as many times as needed. We conjecture that combining these three structures we can build a maximal solution in any odd dimension.
\end{remark}
\begin{remark}
This {\it doubling} construction also provides an alternative example of solution with maximal rank for even dimensions of the form $n=6\ell$, $\ell \in\mathbb{N}$. For these dimensions, in \cite{MMW} the authors propose to replicate $3\ell$ times the {\it desingularization} of \cite{dPMPP}. We conjecture that combining $2\ell$ structures like the {\it doubling of the equatorial} of Theorem \ref{theo} would provide a different solution with maximal rank in these dimensions.
\end{remark}
\begin{remark}\label{r16}
The construction described in Theorem \ref{theo} is not the only possible way to {\it double the equatorial} $\Gamma$.
For any even integer $2m$, we can construct another sequence of solutions that {\it doubles the equatorial} in the form of  the sum of a positive fixed central bubble surrounded by a negative smooth function that desingularizes  Dirac-deltas located in points on $2m$ circles that are collapsing into a Dirac-delta supported along the equatorial $\Gamma$, in the limit as $k \to \infty$, where $m$ of these circles collapse onto $\Gamma$
from above and $m$ from below.

We can also combine {\it desingularization of the equatorial} and {\it doubling of the equatorial}.
For any odd integer  $2m +1$, we can construct a sequence of solutions with the form of the sum of a positive fixed central bubble surrounded by a negative smooth function that consists of two parts. One part desingularizes  Dirac-deltas located in points on $2m$ circles that are collapsing into a Dirac-delta supported along the equatorial $\Gamma$, in the limit as $k \to \infty$. The other part desingularizes Dirac-deltas located at points along the equatorial,
desingularizing a Dirac-delta along the equatorial $\Gamma$, in the limit as $k \to \infty$.

Since the proofs of these constructions are in the same spirit as the one of Theorem \ref{theo}, we will briefly describe them in section \ref{general}, explaining the principal differences.
\end{remark}

The proof of Theorem \ref{theo} is based on a Lyapunov-Schmidt reduction in the spirit of \cite{dPMPP}: we define a first approximation and we look for a solution in a nearby neighborhood, by  linearizing around the approximation and, after developing an appropriate linear theory, solving by a fixed point argument. This allows to reduce the original problem to the solvability of a finite dimensional one. However, for the construction in Theorem \ref{theo} this last step is rather delicate. In fact, the finite dimensional reduction leads to two equations (in the  two parameters to adjust, $\lambda$ and $\tau$) where the sizes of the error and the non linear term play a fundamental role. If one follows the strategy of \cite{dPMPP}, the nonlinear term cannot be controlled and the reduced problem cannot be solved.  For this reason, we need to carry on a much more refined argument.

The key point is the following: if one pays attention to the error term near the bubbles, this can be decomposed in a (relatively) large but symmetric part, and a smaller but non symmetric part (see Proposition \ref{symErrInt}). In the final  argument (the reduction procedure) the symmetric part is orthogonal to the element of the kernel, and therefore not seen. Thus, the part playing a role in the reduction is the non symmetric one, that is significatively smaller. Roughly speaking, this allows to solve the linearized problem in two parts, a symmetric and large but irrelevant part, and a non symmetric but small one. Indeed, if our solution has the form $u_*+\phi$, being $u_*$  the approximation, we will split $\phi$ near each bubble as $\phi=\phi^s+\phi^*$, where $\phi^s$ is symmetric with respect to the hyperplane $y_3=\tau$ (or analogously $y_3=-\tau$). This behavior is also inherited by the nonlinear part of the equation and we will be able to perform the fixed point argument setting the size of $\phi^*$ very small (see Proposition \ref{existPhi1}), what will allow us to conclude the reduction. This new strategy requires delicate decompositions and estimates of every term of the equations, as well as the development of a sharp linear invertibility theory.

\medskip
The structure of the article is the following. In section \ref{secError} we detail the approximation and the error associated, estimating it near and far from the bubbles in different norms, and identifying the symmetric and non symmetric parts and their sizes. Section \ref{linear} is devoted to the linear theory, where a refinement of the theory in \cite{dPMPP} is developed. Section \ref{gluing} is the core of the strategy, where the gluing scheme is performed, together with the precise decomposition of the function in their symmetric and non symmetric part. In section \ref{thm} we carry on the dimensional reduction, concluding the proof of Theorem \ref{theo}. The appendix contains fundamental computations concerning the shape of the approximation.

\section{Doubling construction: a first approximation}\label{secError}

\noindent Let $n \geq 3$ and $\tau \in (0,1)$. In $\R^n$ we fix the following points
$$
 \overline P := (\sqrt{1-\tau^2 } , 0 , \tau , 0 , \ldots , 0) , \quad \underline P :=  (\sqrt{1-\tau^2 } , 0 , -\tau , 0 , \ldots , 0).
$$
Let $ \la \in (0,1)$ be a positive number, and define $R$ as
\begin{equation}\label{defR}
\la^2 + R^2 = 1.
\end{equation}
Let $k$ be an integer number and
\begin{equation}\label{ansatz}
\textbf{u} [\la, \tau ] (y) := U(y) -\sum_{j=1}^k \underbrace{\la^{-{n-2 \over 2}} U\left( {y-\oxi_j \over \la} \right)}_{\oU_j (y)}
-\sum_{j=1}^k \underbrace{\la^{-{n-2 \over 2}} U\left( {y- \uxi_j \over \la} \right)}_{ \uU_j (y) }
\end{equation}
where
\begin{equation}\begin{aligned}\label{defOxi}
\oxi_j &:= R (\sqrt{1-\tau^2} \cos \theta_j , \sqrt{1-\tau^2} \sin \theta_j , \tau , 0, \ldots , 0 ), \\
 \uxi_j &:= R (\sqrt{1-\tau^2} \cos \theta_j , \sqrt{1-\tau^2} \sin \theta_j , -\tau , 0, \ldots , 0 ),
\end{aligned} \quad {\mbox {with}} \quad \theta_j := 2\pi {j-1 \over k}.
\end{equation}
Observe that $\oxi_1 = R\overline P$ and $\uxi_1 = R\underline P$, while $\oxi_j$ and $ \uxi_j$ are obtained respectively from $\overline P$ and $\underline P$ after a rotation in the $(y_1 , y_2)$-plane of angle $2\pi \, {{j-1} \over k} $. Thanks to \eqref{defR}, the functions $\oU_j$, $\uU_j$ are invariant under Kelvin transform,
so that $\textbf{u}$ is also invariant under this transformation, that is
\begin{equation}\label{ikt}
\textbf{u} (y) = |y|^{2-n} \textbf{u} \left({y\over |y|^2} \right).
\end{equation}
A direct observation reflects that $\textbf{u}$ also shares the following symmetries
\begin{equation}\label{ip}
\textbf{u} (y_1, \ldots , - y_j , \ldots  , y_n) = \textbf{u} (y_1, \ldots , y_j , \ldots  , y_n), \quad j=2, \ldots , n,
\end{equation}
\begin{equation}\label{ir}
\textbf{u} (e^{2\pi { (j-1)\over k} i} \bar y , y_3, \ldots , y_n) = \textbf{u} ( \bar y , y_3, \ldots , y_n), \quad \bar y= (y_1 , y_2), \quad j=2, \ldots , k.
\end{equation}
In our construction, we assume that the integer $k$ is large and the parameters  $\la $ and $\tau$ are given by
\begin{equation}\begin{split} \label{par1}
\la &:= {\ell^{2\over n-2} \over k^2} ,  \quad  \tau := {t \over k^{1-{2\over n-1}} },  \quad {\mbox {if}} \quad n\geq 4, \\
\la &:= {\ell^{2} \over k^2 (\ln k)^2 } , \quad  \tau := {t \over \sqrt{\ln k} },  \quad {\mbox {if}} \quad n=3,
\end{split}  \quad
\quad {\mbox {
where}} \quad
\eta <  \ell , \, t < \eta^{-1},
\end{equation}
for some $\eta$ small and fixed, independent of $k$, for any $k$ large enough. The error function, defined as
\begin{equation}\label{error}
E [\ell , t] (y) := \Delta \textbf{u} + \gamma |\textbf{u}|^{p-1} \textbf{u}, \quad y \in \R^n,
\end{equation}
inherits the symmetries \eqref{ip}, \eqref{ir}. As a consequence, fixing a small $\delta >0$ independent of $k$, it is enough to describe the error function
in the sets $B(\oxi_1 , {\delta \over k})$
and $\R^n \setminus \bigcup_{j=1}^k \left( B(\oxi_j , {\delta \over k})  \cup B( \uxi_j , {\delta \over k}) \right)  $
in order to know it in the whole space $\R^n$. 

For our purpose it is convenient to measure the error using the following weighted $L^q$ norm
\begin{equation}\label{normstar2}
\|h\|_{**} :=  \|  \, (1+ |y|)^{{n+2} - \frac {2n} q} h \|_{L^q(\R^n)}.
\end{equation}
For the moment we request that $q$ is a fixed number with  $  {n \over 2} <q<n$. Later on, we will need a more restrictive assumption on $q$, $\frac n2<q<\frac{n}{2-\frac{2}{n-1}}$. We will evaluate the
 $\| \cdot \|_{**}$-norm of  $E$ in the {\it interior regions} $
  B(\oxi_j, {\delta \over k} )$ and $ B( \uxi_j, {\delta \over k} )$, for any $j=1, \ldots , k$, and
in the {\it exterior region} $\R^n \setminus \bigcup_{j=1}^k \left( B(\oxi_j, {\delta \over k}) \cup B( \uxi_j, {\delta \over k} ) \right)$. 

{\it The error in the interior regions $B (\oxi_j , {\delta \over k} )$ and $ B( \uxi_j, {\delta \over k} )$, $j=1, \ldots , k$}. To describe the error $E$ in each one of the balls
$B(\oxi_j, {\delta \over k})$, $ B( \uxi_j, {\delta \over k} )$, it is enough to do it in $B(\oxi_1, {\delta \over k})$, as already observed. In this region the dominant term of the function $\textbf{u}$ in \eqref{ansatz} is  $\oU_1$. Thus, for some $s \in (0,1)$, we have
\begin{equation*}\begin{split}
\gamma^{-1} E (x) &= p \left( \oU_1 (x) + s [ \sum_{j\not=1} \oU_j (x) + \sum_{j=1}^k \uU_j (x) - U(x) ] \right)^{p-1} \left[ -\sum_{j\not=1} \oU_j (x) - \sum_{j=1}^k \uU_j (x) + U(x) \right] \\
&+ \sum_{j\not= 1} \oU_j^p (x) + \sum_{j=1}^k \uU_j^p (x) - U^p (x),
\end{split}
\end{equation*}
for $x \in B (\oxi_1 , {\delta \over k} )$. Let us introduce the change of variable $\la y = x-\oxi_1$, so that
$\oU_1 (\la y + \oxi_1 )= \la^{-{n-2 \over 2}} U(y)$.
In these expanded variables, the error takes the form
\begin{equation}\begin{split} \label{e1}
\la^{n+2 \over 2} \gamma^{-1} E (\oxi_1 + \la y ) &= p \left( U (y) + s \la^{n-2 \over 2} [ \sum_{j\not=1} \oU_j (\la y + \oxi_1 ) + \sum_{j=1}^k \uU_j (\la y + \oxi_1) - U(\la y + \oxi_1 ) ] \right)^{p-1} \times \\
&\times \la^{n-2 \over 2} \, \left[ \sum_{j\not=1} \oU_j (\la y + \oxi_1 ) + \sum_{j=1}^k \uU_j (\la y + \oxi_1 ) - U(\la y + \oxi_1 ) \right] \\
&+ \la^{n+2 \over 2} \left[ \sum_{j\not= 1} \oU_j^p (\la y + \oxi_1 ) + \sum_{j=1}^k \uU_j^p (\la y + \oxi_1 ) - U^p (\la y + \oxi_1 )\right],
\end{split}
\end{equation}
for some $s \in (0,1)$, and
uniformly for $|y|<{\delta \over \la k}$. A direct Taylor expansion gives that

\begin{equation}\begin{split} \label{e0}
\oU_j (\la y + \oxi_1) &= {2^{n-2 \over 2} \la^{n-2 \over 2} \over (\la^2 +|\la y + \oxi_1 - \oxi_j |^2)^{n-2 \over 2} }\quad {\mbox {for}} \quad j\not=1  \\
&= {2^{n-2 \over 2} \la^{n-2 \over 2} \over |\oxi_1- \oxi_j|^{n-2}} \left[ 1- (n-2) {(y, \oxi_1-\oxi_j) \over |\oxi_1 - \oxi_j|^2 } \la +
O\left( {\la^2 (1+ |y|^2) \over |\oxi_1 - \oxi_j |^2} \right) \right],\\
\uU_j (\la y + \oxi_1) &= {2^{n-2 \over 2} \la^{n-2 \over 2} \over (\la^2 +|\la y + \oxi_1 - \uxi_j |^2)^{n-2 \over 2} }\quad {\mbox {for}} \quad j=1 , \ldots , k  \\
&= {2^{n-2 \over 2} \la^{n-2 \over 2} \over |\oxi_1-  \uxi_j|^{n-2}} \left[ 1- (n-2) {(y, \oxi_1- \uxi_j) \over |\oxi_1 -  \uxi_j|^2 } \la +
O\left( {\la^2 (1+ |y|^2) \over |\oxi_1 -  \uxi_j |^2} \right) \right],\\
U(\la y + \oxi_1)&= U(\oxi_1 ) \left[ 1- (n-2) {(y, \oxi_1 ) \over 1+ |\oxi_1 |^2} \la + O\left({\la^2 |y|^2 \over 1+ |\oxi_1|^2} \right) \right],
\end{split}
\end{equation}
uniformly for $|y| \leq {\delta \over \la k}$. In the Appendix we will show that
\begin{equation}\begin{split} \label{uno}
\sum_{j=2}^k {1\over |\oxi_1 - \oxi_j|^{n-2}} =\sum_{j=2}^k {1\over |\uxi_1 - \uxi_j|^{n-2}} =\begin{cases} A_n \,  k^{n-2} \,  \left(1+  O( \tau^2) \right) \quad
{\mbox {if}} \quad n \geq 4,\\ A_3 \, k  \, \ln k  \,   \left(1+ O(\tau^2 ) \right) \quad {\mbox {if}} \quad n =3,
\end{cases}
\end{split}
\end{equation}
and
\begin{equation}\begin{split} \label{due}
\sum_{j=1}^k {1\over |\oxi_1 - \uxi_j|^{n-2}} =
\begin{cases}  B_n \, {k \over \tau^{n-3}} \, \left(1+  O( (\tau k)^{-2}) \right) \quad
{\mbox {if}} \quad n \geq 5,\\
B_n \, {k \over \tau^{n-3}} \, \left(1+  O( \tau) \right) \quad
{\mbox {if}} \quad n =4,\\
A_3 \,  k \,  \ln \left(\frac{\pi}{\tau}\right)\,   \left(1+ O(|\ln \tau|^{-1} ) \right) \quad {\mbox {if}} \quad n =3,
\end{cases}
\end{split}
\end{equation}
where $A_3 := \pi^{-1}$ and, if $n\geq 4$,
$$
A_n:=  {2\over (2\pi)^{n-2}} \, \sum_{j=1}^\infty j^{2-n},\; \quad B_n := {2\over 2^{n-2} \pi}  \int_0^\infty {ds \over (1+ s^2)^{n-2 \over 2}}.
$$
From \eqref{e1}, \eqref{e0}, \eqref{uno} and \eqref{due} we get that
$$
| \la^{n+2 \over 2} \gamma^{-1} E (\oxi_1 + \la y ) |\le  C \left [  \frac { \la^{\frac{n-2}2}}{1+ |y|^4}   +  \la^{\frac{n+2}2}  \right ]
$$
for $|y| \leq {\delta \over \la k}$. Notice that, to obtain this estimate, we do not need to use the precise information gathered in \eqref{uno} and \eqref{due}, but only the order in $k$.  Indeed, it is enough to get the upper bound
$$|\oU_j (\la y + \oxi_1)|\leq C\quad \mbox{ for }|y| \leq {\delta \over \la k}, $$
and the estimate follows since, as \eqref{due} reflects, $|\uU_j (\la y + \oxi_1)|$ is smaller.

Direct computations (see \cite{dPMPP}) give
$$
\|\, (1+ |y|)^{{n+2} - \frac {2n} q}\, \la^{n+2 \over 2} \gamma^{-1} E (\oxi_1 + \la y ) \|_{L^q( |y| < {\delta \over \la k})} \ \le\  \begin{cases}
C
k^{-{n\over q}} \quad {\mbox {if}} \quad n\geq 4,\\
 {C \over k \ln k}
\quad {\mbox {if}} \quad n=3,
\end{cases}
$$
for some fixed constant $C>0$. Therefore, by symmetry we conclude that, for any $j=1, \ldots , k$,
\begin{equation}
\label{formufinal}
\|\, (1+ |y|)^{{n+2} - \frac {2n} q}\, \la^{n+2 \over 2} \gamma^{-1} E (\oxi_j + \la y ) \|_{L^q( |y| < {\delta \over \la k})} \ \le \
\left\{\begin{array}{cc}
Ck^{- \frac{n} {q} }  \quad {\mbox {if}} \quad n\geq 4,\\
{C\over k \ln k} \quad {\mbox {if}} \quad n=3,
 \end{array}
 \right.
\end{equation}
and
\begin{equation}
\label{formufinal2}
\|\, (1+ |y|)^{{n+2} - \frac {2n} q}\, \la^{n+2 \over 2} \gamma^{-1} E (\uxi_j + \la y ) \|_{L^q( |y| < {\delta \over \la k})} \ \le \
\left\{\begin{array}{cc}
Ck^{- \frac{n} {q} }  \quad {\mbox {if}} \quad n\geq 4,\\
{C\over k \ln k} \quad {\mbox {if}} \quad n=3.
 \end{array}
 \right.
\end{equation}

\medskip
\noindent {\it The error in $\R^n \setminus \bigcup_{j=1}^k \left( B(\uxi_j , {\delta \over k}) \cup B(\bar \xi_j , {\delta \over k}) \right)$}. For
$y$ in this region
we have
\begin{equation}\begin{split}\label{errorFuera}
|E (y)| &\le   C \left [  \frac 1 {(1+ |y|^2)^2}  +  \left| \sum_{j=1}^k  \left[ \frac {\la^{\frac{n-2}2 } }{ |y-\uxi_j|^{n-2}} + \frac {\la^{\frac{n-2}2 } }{ |y-\bar \xi_j|^{n-2}}\right]  \right|^{\frac 4{n-2}} \right ] \, \\
&\times \left [ \sum_{j=1}^k  \left( \frac { {\la^{\frac{n-2}2 }}  } { |y-\uxi_j|^{n-2}} +
\frac {\la^{\frac{n-2}2 } }{ |y-\oxi_j|^{n-2}}\right) \right]\\
&
\le \  C \frac {\la^{\frac{n-2}2 }} {(1+ |y|^2)^2} \sum_{j=1}^k  \left( \frac {1  } { |y-\uxi_j|^{n-2}}   +
\frac {1 }{ |y-\oxi_j|^{n-2}}\right),
\end{split}\end{equation}
where we have used that in this exterior region
\begin{equation}\label{sumFuera}\sum_{j=1}^k\frac { {\la^{\frac{n-2}2 }}  } { |y-\uxi_j|^{n-2}}\leq C,\quad \sum_{j=1}^k\frac {\la^{\frac{n-2}2 } }{ |y-\oxi_j|^{n-2}}\leq C.
\end{equation}
For $n\ge 4$, using \eqref{par1},
\begin{equation}\begin{split}\label{e5}
\| \, &(1+ |y|)^{{n+2} - \frac {2n} q}\, E\|_{L^q( \R^n \setminus \bigcup_{j=1}^k \left( B(\uxi_j , {\delta \over k}) \cup B(\bar \xi_j , {\delta \over k}) \right))} \ \\
&\le\
C\la^{\frac{n-2}2} \sum_{j=1}^k \left[
\left (    \int_ { |y-\uxi_j| > {\delta\over k}}  \frac {(1+ |y|)^{(n+2)q - 2n-4q}} {|y-\uxi_j|^{(n-2)q }}\, dy\,   \right )^{\frac 1q }  \right.\\
&\;\;\;\;\left.+ \left( \int_ { |y-\bar \xi_j| > {\delta\over k}}  \frac {(1+ |y|)^{(n+2)q - 2n-4q}}  {|y-\oxi_j|^{(n-2)q }}\, dy\,   \right )^{\frac 1q } \right] \\
&\le \  C
\la^{\frac{n-2}2}  k
\left (    \int_{  {\delta \over k} }^1
\frac { t^{n-1} } {t^{(n-2)q} }\, dt\,     \right )^{\frac 1q }\
 \le \, Ck^{1-\frac nq},
\end{split} \end{equation}
for some constant $C$.
Similarly, if $n=3$, we get
\begin{equation}
\| (1+ |y|)^{{n+2} - \frac {2n} q}\, E \|_{L^q( \R^n \setminus \bigcup_{j=1}^k ( B(\uxi_j , {\delta \over k}) \cup B(\bar \xi_j , {\delta \over k}) ))} \ \le\  \frac {C} {\ln k  }\ .
\label{ee5}\end{equation}
Estimates \eqref{formufinal}, \eqref{formufinal2}, \eqref{e5} and \eqref{ee5} merge in the following

\begin{prop}\label{Pest2}
Assume that $\la$ and $\tau$ satisfy  \eqref{par1}.
There exist an integer $k_0$ and a positive constant $C$ such that for all $k\geq k_0$ the following estimates
hold true
$$
\| E \|_{**} \leq C k^{1-{n\over q}} \quad {\mbox {if}} \quad n\geq 4
\quad
{\mbox {and}}
\quad
\| E \|_{**} \leq C |\ln k|^{-1} \quad {\mbox {if}} \quad n = 3.
$$
We refer to \eqref{normstar2} for the definition of the $\| \cdot \|_{**}$-norm.
\end{prop}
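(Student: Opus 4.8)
The plan is to read off the proposition by assembling the four regional estimates established above the statement and identifying which one dominates. First I would fix $k$ large enough that the $2k$ balls $B(\oxi_j,\delta/k)$, $B(\uxi_j,\delta/k)$, $j=1,\dots,k$, are pairwise disjoint and hence, together with the exterior region $\R^n\setminus\bigcup_{j=1}^k\bigl(B(\oxi_j,\delta/k)\cup B(\uxi_j,\delta/k)\bigr)$, form a partition of $\R^n$ up to a null set. This is elementary: distinct points $\oxi_j$ are separated by $2R\sqrt{1-\tau^2}\,|\sin(\pi/k)|\ge c/k$ for a fixed $c>0$ and $k$ large, each $\oxi_j$ is separated from every $\uxi_{j'}$ by at least $2R\tau$, and by \eqref{par1} one has $\tau\gg 1/k$ (because $k^{1-2/(n-1)}\ll k$ when $n\ge4$ and $\sqrt{\ln k}\ll k$ when $n=3$), so for $\delta$ small no two of these $2k$ balls meet. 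Because of this partition, and since the $\|\cdot\|_{**}$-norm of $E$ is computed region by region in \eqref{formufinal}--\eqref{ee5} (the interior balls being handled after the bubble-normalising rescaling $x=\oxi_j+\la y$ or $x=\uxi_j+\la y$), the quantity $\|E\|_{**}^{q}$ is bounded by the sum of the $q$-th powers of the $2k$ interior contributions plus the $q$-th power of the exterior contribution.

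Next I would insert the bounds already proved. Each interior contribution is at most $Ck^{-n/q}$ if $n\ge4$ and at most $C(k\ln k)^{-1}$ if $n=3$, by \eqref{formufinal} and \eqref{formufinal2}; summing the $q$-th powers over the $2k$ balls gives an interior total at most $C(2k)^{1/q}k^{-n/q}=Ck^{(1-n)/q}$ for $n\ge4$, and at most $Ck^{1/q}(k\ln k)^{-1}=Ck^{1/q-1}(\ln k)^{-1}$ for $n=3$. The exterior contribution is precisely \eqref{e5} for $n\ge4$, namely $Ck^{1-n/q}$, and \eqref{ee5} for $n=3$, namely $C(\ln k)^{-1}$. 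Comparing: since $q\ge1$ we have $(1-n)/q\le 1-n/q$, so for $n\ge4$ the exterior bound $k^{1-n/q}$ absorbs the interior bound $k^{(1-n)/q}$ (both tending to $0$ because $q<n$); for $n=3$ the prefactor $k^{1/q-1}\to0$ (as $q<3$, so $1/q-1<0$), so the exterior bound $(\ln k)^{-1}$ again absorbs the interior one. Taking $k_0$ to be the largest of the thresholds appearing in \eqref{formufinal}, \eqref{formufinal2}, \eqref{e5}, \eqref{ee5} together with the one securing disjointness of the $2k$ balls, and enlarging $C$, gives $\|E\|_{**}\le Ck^{1-n/q}$ for $n\ge4$ and $\|E\|_{**}\le C|\ln k|^{-1}$ for $n=3$.

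No genuinely difficult step remains here: the substantive work — the lattice-sum asymptotics \eqref{uno}--\eqref{due}, the Taylor expansions \eqref{e0}, the interior weighted $L^q$ computation behind \eqref{formufinal}, and the exterior integral in \eqref{e5} — was carried out before the statement, and the proposition is just their collation. The only two points that deserve a moment's attention are (i) checking that the $2k$ interior balls really are disjoint and, with the exterior region, exhaust $\R^n$, so that the $q$-th powers genuinely add up, and (ii) keeping track of the harmless combinatorial factor $(2k)^{1/q}$ produced by the sum over the $2k$ balls and verifying that it is swallowed by the exterior estimate — which it is exactly because $q<n$ (and, for $n=3$, because $q<3$).
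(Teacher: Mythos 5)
Your assembly step contains a genuine gap: you treat \eqref{formufinal}--\eqref{formufinal2} as if they were bounds on the contribution of each ball $B(\oxi_j,\delta/k)$ to the global norm \eqref{normstar2}, and then sum $q$-th powers. But \eqref{formufinal} bounds the \emph{rescaled} quantity $\|(1+|y|)^{n+2-\frac{2n}{q}}\la^{\frac{n+2}{2}}E(\oxi_j+\la y)\|_{L^q(|y|<\delta/(\la k))}$, which is not the restriction of $\|E\|_{**}$ to the ball. Undoing the change of variables $x=\oxi_j+\la y$ replaces the factor $\la^{\frac{n+2}{2}}$ by $\la^{\frac{n}{q}}$ (and the weight $(1+|x|)^{n+2-\frac{2n}{q}}\simeq 1$ there), and since $\frac nq<\frac{n+2}{2}$ this conversion costs a factor $\la^{\frac nq-\frac{n+2}{2}}\to\infty$. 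The loss is not an artifact of crude bookkeeping: near $\oxi_1$ the leading part of the error is $E\simeq p\gamma\,\oU_1^{p-1}U(\oxi_1)$, so $\|E\|_{L^q(B(\oxi_1,\delta/k))}\sim\la^{\frac nq-2}\sim k^{4-\frac{2n}{q}}$, which for $\frac n2<q<n$ is a \emph{positive} power of $k$ and in particular is not $O(k^{1-\frac nq})$. So the chain ``interior total $\le C(2k)^{1/q}k^{-n/q}$, absorbed by the exterior bound'' cannot be repaired as written; the literal, unrescaled contribution of the concentration balls to \eqref{normstar2} is large.

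The paper's own proof is nothing more than the collation of the four regional estimates, with the convention (standard in this reduction scheme and inherited from \cite{dPMPP}) that near each bubble the error is measured after the rescaling $x=\oxi_j+\la y$, i.e.\ Proposition \ref{Pest2} is shorthand for \eqref{formufinal}, \eqref{formufinal2}, \eqref{e5} and \eqref{ee5}. This is exactly how it is used later: in the outer problem the error enters only through $\bigl(1-\sum_j(\ozeta_j+\uzeta_j)\bigr)E$, so only the exterior bounds \eqref{e5}, \eqref{ee5} are relevant there, while in the inner problem one uses the rescaled bound \eqref{formufinal} to get $\|\tilde f_5\|_{**}\le C\la^{\frac{n}{2q}}\simeq Ck^{-\frac nq}$. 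Your exterior discussion and the disjointness/covering remarks are fine (for an upper bound a covering suffices; disjointness is not needed), but the interior part of your argument should be restated in the rescaled variables rather than as a contribution to the global norm; attempting the latter proves a statement that is actually false.
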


The following result is the key point of the argument carried out for the gluing procedure and the reduction method in sections \ref{gluing} and \ref{thm}. It provides a decomposition of the interior error term in two parts: one is relatively large but symmetric, and the other is non symmetric but smaller in size. Thanks to this observation we can refine the fixed point argument to fix a smaller size of the functions involved in the reduction.
\begin{prop}\label{symErrInt}
Assume that $\la$ and $\tau$ satisfy  \eqref{par1} and $|y|<\frac{\delta}{\lambda k}$. Then, there exists a decomposition
$$E(\oxi_1+\lambda y)=E^s(\oxi_1+\lambda y)+E^*(\oxi_1+\lambda y),$$
such that $E^s(\oxi_1+\lambda y)$ is even with respect to $y_\alpha$ for all $\alpha=1,\ldots,n$
and there exists an integer $k_0$ such that, for $k\geq k_0$,
\begin{equation}\label{errPoint}
|\lambda^{\frac{n+2}{2}}E^*(\oxi_1+\lambda y)|\leq 
\begin{cases}
C \frac{\lambda^{\frac{n-2}{2}}}{k}\frac{1}{1+|y|^3}\mbox{ if }n\geq 4,\\
C \frac{\lambda^{1/2}}{k(\ln k)^3}\frac{1}{1+|y|^3}\mbox{ if }n=3,
\end{cases},\qquad |y|<\frac{\delta}{\lambda k}.
\end{equation}
\end{prop}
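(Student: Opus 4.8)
The plan is to take $E^{s}$, in each interior ball, to be the part of $E$ which is even in every coordinate, and $E^{*}:=E-E^{s}$. Since $\textbf{u}$ — hence $E$ — is even in $x_{\alpha}$ for $\alpha=2,\dots ,n$ by \eqref{ip}, and $(\oxi_{1})_{\alpha}=0$ for every $\alpha\neq 1,3$, the function $E(\oxi_{1}+\la\,\cdot\,)$ is already even in $y_{2},y_{4},\dots ,y_{n}$; one therefore only symmetrises in the two remaining variables, setting
\[
E^{s}(\oxi_{1}+\la y):=\tfrac14\sum_{\sigma_{1},\sigma_{3}\in\{\pm1\}}E\bigl(\oxi_{1}+\la(\sigma_{1}y_{1},\,y_{2},\,\sigma_{3}y_{3},\,y_{4},\dots ,y_{n})\bigr),
\]
so that $E^{s}$ is even in all $y_{\alpha}$; by the symmetries \eqref{ip}--\eqref{ir} it is enough to prove \eqref{errPoint} on $B(\oxi_{1},\delta/k)$. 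Subtracting from $E=\Delta\textbf{u}+\gamma|\textbf{u}|^{p-1}\textbf{u}$ the equations solved by the bubbles yields $\gamma^{-1}E=|\textbf{u}|^{p-1}\textbf{u}-U^{p}+\sum_{j=1}^{k}\oU_{j}^{p}+\sum_{j=1}^{k}\uU_{j}^{p}$. On $B(\oxi_{1},\delta/k)$ we write $\textbf{u}=-\oU_{1}+W$ with $W:=U-\sum_{j\neq1}\oU_{j}-\sum_{j=1}^{k}\uU_{j}$; every singularity other than $\oxi_{1}$ lies at distance $\gtrsim 1/k$ from $\oxi_{1}$, so $W$ is smooth there and, expanding the power nonlinearity around $-\oU_{1}$,
\[
\gamma^{-1}E=p\,\oU_{1}^{p-1}\,W+\mathcal R,\qquad |\mathcal R|\le C\Bigl(\oU_{1}^{p-2}W^{2}+|W|^{p}+\Bigl|\,\sum_{j\neq1}\oU_{j}^{p}+\sum_{j=1}^{k}\uU_{j}^{p}-U^{p}\Bigr|\Bigr).
\]
In the rescaled variable $x=\oxi_{1}+\la y$ we have $\oU_{1}^{p-1}(\oxi_{1}+\la y)=\la^{-2}U(y)^{p-1}\le C\la^{-2}(1+|y|)^{-4}$, which is radial, hence even, so
\[
\la^{\frac{n+2}2}E^{*}(\oxi_{1}+\la y)=p\gamma\,\la^{\frac{n-2}2}U(y)^{p-1}\,W^{*}(\oxi_{1}+\la y)+\la^{\frac{n+2}2}\gamma\,\mathcal R^{*}(\oxi_{1}+\la y),
\]
where $W^{*}$ and $\mathcal R^{*}$ are the odd parts of $W$ and of $\mathcal R$ with respect to the symmetrisation above.

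The heart of the argument is the estimate of $W^{*}$. Writing $W^{*}$ as an average of finite differences $W(\cdot)-W\bigl(\,\cdot\,|_{y_{1}\mapsto\sigma_{1}y_{1},\,y_{3}\mapsto\sigma_{3}y_{3}}\bigr)$ and applying the fundamental theorem of calculus reduces everything to a pointwise bound for $\partial_{x_{1}}W$ and $\partial_{x_{3}}W$ on $B(\oxi_{1},\delta/k)$. Using
\[
\partial_{x_{\alpha}}\oU_{j}(x)=-(n-2)2^{\frac{n-2}2}\la^{\frac{n-2}2}(x-\oxi_{j})_{\alpha}\,(\la^{2}+|x-\oxi_{j}|^{2})^{-\frac n2},
\]
the lower bound $|x-\oxi_{j}|\gtrsim|\oxi_{1}-\oxi_{j}|\gtrsim k^{-1}\min(j-1,\,k-j+1)$ valid on $B(\oxi_{1},\delta/k)$ once $\delta$ is small, the sums \eqref{uno} and \eqref{due}, and the two elementary facts $|(x-\oxi_{j})_{3}|=|\la y_{3}|\le\delta/k$ (because $(\oxi_{1}-\oxi_{j})_{3}\equiv0$) and $|x-\uxi_{j}|\ge 2R\tau$, one obtains
\[
|\partial_{x_{1}}W(x)|+|\partial_{x_{3}}W(x)|\le C\,\la^{\frac{n-2}2}k^{n-1}=
\begin{cases}
C\,k,& n\ge4,\\[3pt]
C\,k/\ln k,& n=3,
\end{cases}\qquad x\in B\bigl(\oxi_{1},\tfrac{\delta}{k}\bigr),
\]
whence, for $|y|<\delta/(\la k)$, $|W^{*}(\oxi_{1}+\la y)|\le C\la k|y|$ if $n\ge4$ and $|W^{*}(\oxi_{1}+\la y)|\le C\la\frac{k}{\ln k}|y|$ if $n=3$.

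Inserting this into the leading term, using $|y|(1+|y|)^{-4}\le(1+|y|)^{-3}$ and the relations $\la k^{2}=\ell^{2/(n-2)}$ ($n\ge4$), $\la k^{2}(\ln k)^{2}=\ell^{2}$ ($n=3$) from \eqref{par1}, gives
\[
\bigl|p\gamma\,\la^{\frac{n-2}2}U(y)^{p-1}W^{*}(\oxi_{1}+\la y)\bigr|\le C\,\la^{\frac{n-2}2}(1+|y|)^{-4}\,\la k|y|\le
\begin{cases}
C\,\dfrac{\la^{\frac{n-2}2}}{k}\,\dfrac{1}{1+|y|^{3}},& n\ge4,\\[11pt]
C\,\dfrac{\la^{1/2}}{k(\ln k)^{3}}\,\dfrac{1}{1+|y|^{3}},& n=3,
\end{cases}
\]
which is exactly the right-hand side of \eqref{errPoint}.

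Finally one must bound the odd part of the nonlinear remainder. For $n\ge4$ a direct computation, using $|W|\le C$ on $B(\oxi_{1},\delta/k)$ and the sums \eqref{uno}--\eqref{due}, shows that already $\la^{\frac{n+2}2}|\mathcal R|$ is bounded by the right-hand side of \eqref{errPoint}, so $\mathcal R$ may be absorbed into $E^{*}$. For $n=3$ the term $\oU_{1}^{p-2}W^{2}=\oU_{1}^{3}W^{2}$ is only of borderline size (of order $\la(1+|y|)^{-3}$ after multiplication by $\la^{5/2}$), so one first writes $W=W(\oxi_{1})+\widetilde W$: the constant piece $\oU_{1}^{3}W(\oxi_{1})^{2}$ is even and goes into $E^{s}$, the piece $\oU_{1}^{3}\widetilde W^{2}$ is already within the target, and the odd part $2\oU_{1}^{3}W(\oxi_{1})W^{*}$ has to be controlled via a sharpened estimate for $W^{*}$ coming from the term-by-term Taylor expansion of $\widetilde W$, in which the cancellations $(\oxi_{1}-\oxi_{j})_{3}\equiv0$ and $(\oxi_{1}-\oxi_{j})_{1}=|\oxi_{1}-\oxi_{j}|^{2}/(2R\sqrt{1-\tau^{2}})$ are used at each order to replace the dangerous sums $\sum_{j}|\oxi_{1}-\oxi_{j}|^{-(n-1)}$ by the milder ones of \eqref{uno}. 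This control of the odd part of the remainder — and especially the refinement of the $W^{*}$-estimate needed to absorb the borderline term when $n=3$ — is the main technical obstacle; the rest is bookkeeping with the weighted norms and the sums already computed in the appendix.
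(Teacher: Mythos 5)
Your decomposition is genuinely different from the paper's: you take $E^s$ to be the exact even part of $E(\oxi_1+\la\,\cdot\,)$ obtained by symmetrising in $(y_1,y_3)$, whereas the paper defines $E^s$ explicitly as the zeroth--order terms $pU(y)^{p-1}\la^{\frac{n-2}{2}}[\,\text{constants}\,]$ plus constants, and then estimates $E^*$ through the first--order Taylor terms, using $\sum_{j\neq1}|\oxi_1-\oxi_j|^{1-n}\le Ck^{n-1}$. Your treatment of the leading linear term (fundamental theorem of calculus plus the uniform bound $|\partial_{x_1}W|+|\partial_{x_3}W|\le C\la^{\frac{n-2}{2}}k^{n-1}$ on $B(\oxi_1,\delta/k)$) reproduces exactly the size the paper gets for that term, and your verification that for $n\ge4$ the whole remainder $\mathcal R$ already fits under the right-hand side of \eqref{errPoint} is correct (up to the cosmetic point that for $n>6$, where $p<2$, the Taylor remainder should be bounded by $C|W|^p$ rather than by $C(\oU_1^{p-2}W^2+|W|^p)$). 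So for $n\ge4$ the proposal is essentially complete.

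For $n=3$, however, the argument is not closed, and this is the case the paper actually needs. Your established bound $|W^*(\oxi_1+\la y)|\le C\la^{1/2}k^2\cdot\la|y|=C\frac{\la k}{\ln k}|y|$ is not strong enough for the cross term $2\oU_1^3W(\oxi_1)W^*$: after rescaling that term is of size $\la U(y)^3|W^*|$, so matching the target $C\frac{\la^{1/2}}{k(\ln k)^3}(1+|y|)^{-3}$ forces $|W^*|\le C\la^{-1/2}k^{-1}(\ln k)^{-3}\sim(\ln k)^{-2}$ uniformly on $|y|<\delta/(\la k)$, while at $|y|\sim\delta/(\la k)$ your bound only gives $|W^*|\lesssim\delta/\ln k$ --- off by a factor $\ln k$. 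Moreover this cannot be repaired within your scheme: a uniform gradient bound on the ball is genuinely of order $\la^{1/2}k^2$ (the identity $(\oxi_1-\oxi_j)_1=|\oxi_1-\oxi_j|^2/(2R\sqrt{1-\tau^2})$ helps only at the centre; for $|x-\oxi_1|\sim\delta/k$ the displacement against $\sum_j|\oxi_1-\oxi_j|^{-3}\sim k^3$ restores $k^2$), so the ``sharpened estimate for $W^*$'' that you invoke and explicitly leave unproven is not a refinement of the FTC argument but a different, term-by-term expansion of the odd part, in which the first-order odd-in-$y_1$ contributions carry the extra factor $(\oxi_1-\oxi_j)_1\sim|\oxi_1-\oxi_j|^2$ (and the odd-in-$y_3$ ones vanish for the $\oxi_j$), yielding $|W^*|\lesssim\la^{3/2}k\ln k\,|y|+\dots\lesssim C/k$, after which the cross term closes comfortably. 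As written, the hardest and most relevant case rests on an asserted estimate, so there is a genuine gap. (To your credit, you did isolate the borderline quadratic term, which the paper's own very terse proof passes over even though it also sits inside its $E^*$; your splitting $W=W(\oxi_1)+\widetilde W$ is the right device, but the cross term still has to be estimated, and that is precisely the missing piece.)
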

\begin{proof}
The idea is to identify in \eqref{e1} the main order terms using \eqref{e0}. Indeed, we write, for $|y|<\frac{\delta}{k\lambda}$,
\begin{equation}\begin{split}\label{errorSim}
\gamma^{-1} \, \lambda^{\frac{n+2}{2}}&E^s(\oxi_1+\lambda y):=pU(y)^{p-1}\lambda^{\frac{n-2}{2}}\left[\sum_{j\neq 1}\frac{2^{\frac{n-2}{2}}\lambda^{\frac{n-2}{2}}}{|\oxi_1-\oxi_j|^{n-2}}+\sum_{j=1}^k\frac{2^{\frac{n-2}{2}}\lambda^{\frac{n-2}{2}}}{|\oxi_1-\uxi_j|^{n-2}}-U(\oxi_1)\right]\\
&+ \la^{n+2 \over 2} \left[ \sum_{j\not= 1} \left(\frac{2^{\frac{n-2}{2}}\lambda^{\frac{n-2}{2}}}{|\oxi_1-\oxi_j|^{n-2}}\right)^p+ \sum_{j=1}^k \left(\frac{2^{\frac{n-2}{2}}\lambda^{\frac{n-2}{2}}}{|\oxi_1-\uxi_j|^{n-2}}\right)^p - U^p (\oxi_1 )\right],
\end{split}\end{equation}
that is even with respect to every $y_\alpha$, $\alpha=1,\ldots,n$. Let us define
$$E^*(\oxi_1+\lambda y):=E(\oxi_1+\lambda y)-E^s(\oxi_1+\lambda y).$$
Using \eqref{uno} (renaming $n-1=\tilde{n}-2$ and noticing that $\tilde{n}\geq 4$) we get that
\begin{equation*}
\bigg|\lambda^{\frac{n-2}{2}}\sum_{j\neq 1}\frac{(y,\oxi_1-\oxi_j)}{|\oxi_1-\oxi_j|^n}\lambda\bigg|\leq \lambda^{\frac{n}{2}}|y|\sum_{j\neq 1}\frac{1}{|\oxi_1-\oxi_j|^{n-1}}\leq C\lambda^{\frac{n}{2}}k^{n-1}|y|\leq 
\begin{cases}
C\frac{|y|}{k}\mbox{ if }n\geq 4,\\
C\frac{|y|}{k(\ln k)^3}\mbox{ if }n=3,
\end{cases}
\end{equation*}
and \eqref{errPoint} follows for $|y|<\frac{\delta}{\lambda k}$.
\end{proof}

\section{The linear theory}\label{linear}

\noindent Consider the linear problem
 \begin{equation}\label{linRn}
 L_0(\varphi)=h(y)\quad\hbox{in}\quad \R^n,\qquad L_0(\varphi):=\Delta \varphi+p\gamma U^{p-1}\varphi.
 \end{equation}
 It is known that
$$\ker\{L_0\}=\mbox{span}\{Z_1,Z_2,\ldots,Z_{n+1}\},$$
with
\begin{equation}\label{Zdef}
Z_\alpha:=\partial_{y_\alpha}U,\;\; \alpha=1,\ldots,n,\quad Z_{n+1}:=y\cdot\nabla U+\frac{n-2}{2}U.
\end{equation}
Defining the norm
\begin{equation}\label{normStar}
\|\varphi\|_*:=\|(1+|y|^{n-2})\varphi\|_{L^\infty(\R^n)},
\end{equation}
in \cite[Lemma 3.1]{dPMPP} the following existence result is proved.
\begin{lemma}\label{existLin}
Assume $\frac{n}{2}<q<n$ in the definition of $\|\cdot\|_{**}$. Let $h$ be a function such that $\|h\|_{**}<+\infty$ and
$$\int_{\R^n}Z_\alpha h=0\;\;\mbox{ for all }\;\alpha=1,\ldots,n+1.$$
Then \eqref{linRn} has a unique solution $\varphi$ with $\|\varphi\|_*<+\infty$ such that
$$\int_{\R^n}U^{p-1}Z_\alpha \varphi=0\;\;\mbox{ for all }\alpha=1,\ldots,n+1,\quad \mbox{ and }\quad \|\varphi\|_*\leq C\|h\|_{**},$$
for some constant $C$ depending only on $q$ and $n$.
\end{lemma}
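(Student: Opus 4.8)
The plan is to prove the statement in two steps, following the classical scheme for the linearized Yamabe operator: (i) an \emph{a priori} estimate $\|\varphi\|_*\le C\|h\|_{**}$ valid for every $\varphi$ with $\|\varphi\|_*<\infty$ solving $L_0(\varphi)=h$ and satisfying $\int_{\R^n}U^{p-1}Z_\alpha\varphi=0$; and (ii) the construction of such a $\varphi$ for any admissible $h$. Once (i) is available, uniqueness is immediate (apply it with $h=0$), and it also yields the claimed bound.

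\emph{Step (i): the a priori estimate, by contradiction.} If it failed there would be $h_m$, $\varphi_m$ with $L_0(\varphi_m)=h_m$, $\int_{\R^n}U^{p-1}Z_\alpha\varphi_m=0$, $\|\varphi_m\|_*=1$ and $\|h_m\|_{**}\to0$. Since $\|h_m\|_{**}<\infty$ with $\frac n2<q<n$ we have $h_m\in L^q_{\mathrm{loc}}$, so by interior $W^{2,q}$ estimates and the embedding $W^{2,q}\hookrightarrow C^0$ (here $q>\frac n2$ is used), together with Arzel\`a--Ascoli, along a subsequence $\varphi_m\to\varphi$ in $C^0_{\mathrm{loc}}(\R^n)$. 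The limit satisfies $L_0(\varphi)=0$ and $|\varphi(y)|\le(1+|y|)^{2-n}$, and, since $U^{p-1}Z_\alpha$ decays fast, dominated convergence gives $\int_{\R^n}U^{p-1}Z_\alpha\varphi=0$; by non-degeneracy of the bubble $\ker L_0=\mbox{span}\{Z_1,\dots,Z_{n+1}\}$, and the invertibility of the Gram matrix $\big(\int_{\R^n}U^{p-1}Z_\alpha Z_\beta\big)$ (it is diagonal with positive entries) then forces $\varphi\equiv0$. The crux is to upgrade this to $\|\varphi_m\|_*\to0$. Using the Newtonian potential representation
$$\varphi_m(y)=c_n\int_{\R^n}\frac{p\gamma U^{p-1}\varphi_m(z)+h_m(z)}{|y-z|^{n-2}}\,dz,$$
the bound $\|h_m\|_{L^1(\R^n)}\le C\|h_m\|_{**}$ (which holds precisely because $q>\frac n2$) shows the $h_m$-term is $o(1)(1+|y|)^{2-n}$; for the $U^{p-1}\varphi_m$-term, fix $R_0$ large so that $p\gamma U^{p-1}\le\epsilon|y|^{-2}$ on $|y|\ge R_0$ and compare with the $\Delta$-supersolution $|y|^{2-n}$ outside $B_{R_0}$ to get
$$\sup_{|y|\ge R_0}(1+|y|^{n-2})|\varphi_m(y)|\le C\Big(R_0^{n-2}\sup_{\partial B_{R_0}}|\varphi_m|+\|h_m\|_{**}+\epsilon\,\|\varphi_m\|_*\Big);$$
taking $\epsilon$ small absorbs the last term, and since $\varphi_m\to\varphi\equiv0$ uniformly on $\overline{B_{R_0}}$ the right-hand side is $o(1)$. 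Hence $\|\varphi_m\|_*\to0$, contradicting $\|\varphi_m\|_*=1$.

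\emph{Step (ii): existence.} With $\|h\|_{**}<\infty$ and $q>\frac n2$, H\"older's inequality gives $h\in L^{\frac{2n}{n+2}}(\R^n)$, which embeds into the dual of $\mathcal{D}^{1,2}(\R^n)$. Writing $L_0=-\Delta+(-p\gamma U^{p-1})$ on $\mathcal{D}^{1,2}(\R^n)$, the second operator is compact (multiplication by the fast-decaying $L^\infty$ weight $U^{p-1}$ composed with a compact Sobolev embedding), so $L_0$ is Fredholm of index $0$, self-adjoint, with kernel $\mbox{span}\{Z_1,\dots,Z_{n+1}\}$. The hypothesis $\int_{\R^n}Z_\alpha h=0$ is exactly orthogonality to the cokernel, hence there is $\varphi\in\mathcal{D}^{1,2}(\R^n)$ with $L_0(\varphi)=h$; subtracting the appropriate linear combination of the $Z_\alpha$ (invertibility of the Gram matrix again) we arrange $\int_{\R^n}U^{p-1}Z_\alpha\varphi=0$. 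Elliptic regularity gives $\varphi\in W^{2,q}_{\mathrm{loc}}\subset C^0$, and the Newtonian-potential estimate of Step (i), now with $U^{p-1}\varphi=O(|y|^{-4})\,O(|y|^{2-n})$ and $\|h\|_{L^1}<\infty$, yields $|\varphi(y)|\le C(1+|y|)^{2-n}$, i.e. $\|\varphi\|_*<\infty$. Step (i) then gives $\|\varphi\|_*\le C\|h\|_{**}$.

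The main obstacle is Step (i), and within it the control at spatial infinity: obtaining the decay rate $|y|^{2-n}$ uniformly, which is precisely where the assumption $q>\frac n2$ enters (it makes $h$ globally integrable with the correct mass bound), together with arranging the barrier argument so that the nonlocal term $p\gamma U^{p-1}\varphi$ can be absorbed into the left-hand side. (One may also bypass Fredholm theory by solving the problem with Lagrange multipliers on expanding balls $B_R$ with homogeneous Dirichlet data, proving the a priori estimate uniformly in $R$ by the same contradiction argument, and letting $R\to\infty$.)
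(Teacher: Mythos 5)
The paper does not actually prove this lemma: it is quoted verbatim from \cite[Lemma 3.1]{dPMPP}, so there is no in-paper argument to compare against. Your proposal is the standard proof of this statement and is, in outline, essentially the argument used in that reference: an a priori estimate obtained by contradiction (compactness on bounded sets via elliptic estimates, identification of the limit with an element of $\ker L_0=\mbox{span}\{Z_1,\dots,Z_{n+1}\}$ killed by the orthogonality conditions, and recovery of the weighted bound at infinity through the Newtonian potential representation, where $\frac n2<q<n$ is exactly what makes $\|h\|_{L^1}\le C\|h\|_{**}$ and the convolution estimates work), followed by existence via Fredholm theory in $\mathcal{D}^{1,2}(\R^n)$ under the orthogonality $\int_{\R^n}Z_\alpha h=0$, plus an adjustment by kernel elements using the invertibility of the Gram matrix. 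So the approach is correct and matches the cited source.

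Two points in your write-up are looser than they should be, though both are repairable by tools you already invoke. First, $|y|^{2-n}$ is harmonic, hence $(\Delta+p\gamma U^{p-1})|y|^{2-n}>0$; it is not a supersolution of the full linearized operator, so the comparison argument as phrased does not close. Either run the barrier with $|y|^{-\sigma}$, $0<\sigma<n-2$, and then bootstrap to the rate $|y|^{2-n}$, or (simpler, and consistent with the rest of your Step (i)) derive the displayed exterior estimate directly from the representation formula, splitting the convolution with $p\gamma U^{p-1}\varphi_m$ into $\{|z|\le R_0\}$ (controlled by the local uniform convergence $\varphi_m\to0$) and $\{|z|>R_0\}$ (which yields the factor $\epsilon=CR_0^{-2}$ to be absorbed). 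Second, in Step (ii) the sentence deducing $\|\varphi\|_*<\infty$ from ``$U^{p-1}\varphi=O(|y|^{-4})O(|y|^{2-n})$'' is circular as written, since the decay $|y|^{2-n}$ is what you are trying to establish; one should instead start from $\varphi\in\mathcal{D}^{1,2}\cap C^0$, get boundedness and vanishing at infinity from local elliptic estimates (the local norms of $h$ and $\varphi$ tend to zero), and then iterate the representation formula finitely many times to upgrade the decay to $(1+|y|)^{2-n}$ --- or simply use your own parenthetical alternative (solve on expanding balls with the uniform a priori estimate and pass to the limit), which avoids the issue altogether. With these adjustments the proof is complete.
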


We will also need a priori estimates of the gradient of such solution. 
\begin{lemma}\label{estGradient}
Let $\varphi$ be the solution of \eqref{linRn} predicted by Lemma \ref{existLin} and assume $\|(1+|y|^{n+2})h\|_{L^\infty(\R^n)}<+\infty$. Then, there exists $C>0$ depending only on $n$ such that
$$\|\nabla\varphi\|_{L^\infty(\R^n)}\leq C(\|\varphi\|_*+\|(1+|y|^{n+2})h\|_{L^\infty(\R^n)}).$$
\end{lemma}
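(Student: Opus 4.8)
The plan is to use interior elliptic estimates (Schauder or $W^{2,p}$ with Sobolev embedding) applied to the equation $\Delta \varphi = -p\gamma U^{p-1}\varphi + h$, localized on unit balls, and then to patch the resulting bounds using the decay of $\varphi$ encoded in $\|\varphi\|_*$ and the decay of the right-hand side. Concretely, fix $y_0 \in \R^n$ and work on $B_2 := B(y_0,2)$. Write $f := -p\gamma U^{p-1}\varphi + h$, which is the full right-hand side of \eqref{linRn}. First I would bound $\|f\|_{L^\infty(B_2)}$: since $0 < U(y) \le C(1+|y|)^{2-n}$, we have $|U^{p-1}\varphi| \le C(1+|y|)^{-4}\,(1+|y|)^{-(n-2)}\|\varphi\|_* = C(1+|y|)^{-(n+2)}\|\varphi\|_*$, and by hypothesis $|h| \le C(1+|y|)^{-(n+2)}\|(1+|y|^{n+2})h\|_{L^\infty}$; hence $\|f\|_{L^\infty(B_2)} \le C(1+|y_0|)^{-(n+2)}(\|\varphi\|_* + \|(1+|y|^{n+2})h\|_{L^\infty})$.

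Next, by standard interior $L^p$ estimates for the Laplacian, for any $p > n$,
$$
\|\varphi\|_{W^{2,p}(B(y_0,1))} \le C\big(\|\varphi\|_{L^p(B_2)} + \|f\|_{L^p(B_2)}\big),
$$
and Morrey's embedding $W^{2,p} \hookrightarrow C^1$ gives
$$
\|\nabla\varphi\|_{L^\infty(B(y_0,1))} \le C\big(\|\varphi\|_{L^\infty(B_2)} + \|f\|_{L^\infty(B_2)}\big).
$$
Now $\|\varphi\|_{L^\infty(B_2)} \le C(1+|y_0|)^{-(n-2)}\|\varphi\|_*$ from the definition of $\|\cdot\|_*$ (using that $1+|y|$ is comparable on $B_2$ for $|y_0|$ large, and trivially bounded for $|y_0|$ in a compact set). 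Combining the two displays with the bound on $\|f\|_{L^\infty(B_2)}$ from the previous paragraph yields
$$
|\nabla\varphi(y_0)| \le C\big((1+|y_0|)^{-(n-2)} + (1+|y_0|)^{-(n+2)}\big)\big(\|\varphi\|_* + \|(1+|y|^{n+2})h\|_{L^\infty}\big) \le C\big(\|\varphi\|_* + \|(1+|y|^{n+2})h\|_{L^\infty}\big),
$$
since the prefactor is bounded by a constant for all $y_0 \in \R^n$. Taking the supremum over $y_0$ gives the claim. (If one wishes to state the sharper weighted bound $\|(1+|y|^{n-1})\nabla\varphi\|_{L^\infty} \le C(\dots)$, the same argument delivers it, but only the unweighted version is needed here.)

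The argument is essentially routine; the only mild subtlety — and the one place to be careful — is bookkeeping the weights so that the local $L^\infty$ norms of $\varphi$ and $f$ on $B(y_0,2)$ are correctly tied to the global weighted norms $\|\varphi\|_*$ and $\|(1+|y|^{n+2})h\|_{L^\infty}$, uniformly in $y_0$; this works precisely because the weight $(1+|y|)$ oscillates by at most a bounded factor on any ball of radius $2$. No cancellation or structure of $U$ beyond its pointwise decay $U \le C(1+|y|)^{2-n}$ is used, so there is no genuine obstacle.
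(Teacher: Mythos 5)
Your argument is correct: the local $L^\infty$ bounds on $\varphi$ and on $f=-p\gamma U^{p-1}\varphi+h$ over $B(y_0,2)$, the uniform interior $W^{2,p}$ estimate plus Morrey embedding on unit balls, and the observation that the weight $(1+|y|)$ varies by a bounded factor on such balls together give $|\nabla\varphi(y_0)|\le C(1+|y_0|)^{-(n-2)}\bigl(\|\varphi\|_*+\|(1+|y|^{n+2})h\|_{L^\infty}\bigr)$, which is more than enough for the stated estimate. This is, however, a different route from the paper's. The paper only uses a standard interior estimate on $B_1$ and then handles the exterior region via the Kelvin transform: setting $\tilde{\varphi}(y)=|y|^{2-n}\varphi(|y|^{-2}y)$, which satisfies the same equation with right-hand side $\tilde h(y)=|y|^{-n-2}h(|y|^{-2}y)$ (here the Kelvin invariance of $U$ is used), one applies the interior estimate to $\tilde\varphi$ on $B_1$ and transfers it back through $|\nabla\varphi(y)|\le C|y|^{1-n}|\tilde\varphi(|y|^{-2}y)|+C|y|^{-n}|\nabla\tilde\varphi(|y|^{-2}y)|$. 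Your approach is more elementary and does not rely on the Kelvin structure, only on the pointwise decay $U\le C(1+|y|)^{2-n}$; the paper's approach buys a sharper by-product, namely the decay $|\nabla\varphi(y)|\le C|y|^{-(n-1)}$ at infinity, consistent with the symmetry framework used elsewhere in the paper. One small caveat: your parenthetical remark that ``the same argument delivers'' the weighted bound with weight $(1+|y|)^{n-1}$ is not accurate as stated, since fixed unit balls only yield the weight $(1+|y|)^{n-2}$; to reach $(1+|y|)^{n-1}$ you would need estimates on balls of radius comparable to $|y_0|$ (or the Kelvin transform). Since only the unweighted bound is claimed in the lemma, this does not affect the validity of your proof.
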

\begin{proof}
By standard elliptic estimates,
\begin{equation}\label{gradInt}
\|\nabla \varphi\|_{L^\infty(B_1)}\leq C\left(\|\varphi\|_{L^\infty(B_2)}+\|h\|_{L^\infty(B_2)}\right)\leq C\left(\|\varphi\|_*+\|(1+|y|^{n+2})h\|_{L^\infty(\R^n)}\right).
\end{equation}
Defining $\tilde{\varphi}(y):=|y|^{2-n}\varphi(|y|^{-2}y)$ it can be checked that
$$\Delta \tilde{\varphi}+p\gamma U^{p-1}\tilde{\varphi}=\tilde{h}\mbox{ in }\R^n\setminus\{0\},$$
with $\tilde{h}(y):=|y|^{-n-2}h(y |y|^{-2})$, and thus
$$\|\nabla \tilde{\varphi}\|_{L^\infty(B_1)}\leq C\left(\|\tilde{\varphi}\|_{L^\infty(B_2)}+\|\tilde{h}\|_{L^\infty(B_2)}\right).$$
Noticing that
$$\|\tilde{\varphi}\|_{L^\infty(B_2)}=\||y|^{n-2}\varphi\|_{L^\infty(\R^n\setminus B_{1/2})}\leq \|\varphi\|_*,$$
$$\|\tilde{h}\|_{L^\infty(B_2)}=\||y|^{n+2}h\|_{L^\infty(\R^n\setminus B_{1/2})}\leq \|(1+|y|^{n+2})h\|_{L^\infty(\R^n)},$$
we obtain
\begin{equation}\label{gradTilde}
\|\nabla \tilde{\varphi}\|_{L^\infty(B_1)}\leq C\left(\|\varphi\|_*+\|(1+|y|^{n+2})h\|_{L^\infty(\R^n)}\right).
\end{equation}
Writing $\varphi(y)=|y|^{-(n-2)}\tilde{\varphi}(|y|^{-2}y)$ it can be seen that
$$|\nabla\varphi(y)|\leq \frac{C}{|y|^{n-1}}|\tilde{\varphi}(|y|^{-2}y)|+\frac{C}{|y|^n}|\nabla\tilde{\varphi}(|y|^{-2}y)|,$$
and thus
\begin{equation}\label{gradExt}
\|\nabla \varphi\|_{L^\infty(\R^n\setminus B_1)}\leq C\left(\|\tilde{\varphi}\|_{L^\infty(B_1)}+\|\nabla \tilde{\varphi}\|_{L^\infty(B_1)}\right)
\end{equation}
Putting together \eqref{gradInt}, \eqref{gradTilde} and \eqref{gradExt} the estimate follows.
\end{proof}

\section{The gluing scheme}\label{gluing}
\noindent Our goal will be to find a solution of the form
\begin{equation}\label{ansatz1}u=\textbf{u}+\phi,\end{equation}
with $\phi$ a small function (in a sense that will be precised later). Thus, $u$ is a solution  of \eqref{prob} if and only if
\begin{equation}\label{probPhi}
\Delta\phi+p\gamma |\textbf{u}|^{p-1}\phi +E+\gamma N(\phi)=0,
\end{equation}
where
\begin{equation*}\begin{split}
N(\phi)&:=|\textbf{u}+\phi|^{p-1}(\textbf{u}+\phi)-|\textbf{u}|^{p-1}\textbf{u}-p|\textbf{u}|^{p-1}\phi,
\end{split}\end{equation*}
and $E$ was defined in \eqref{error}.

Let $\zeta(s)$ be a smooth function such that $\zeta(s)=1$ for $s<1$ and $\zeta(s)=0$ for $s>2$, and let $\delta >0$ be a fixed number independent of $k$. Let us define
\begin{equation}\label{ccut}
\overline{\zeta}_j(y):=\begin{cases}
\zeta(k \delta^{-1}|y|^{-2}|y-\bar \xi_j|y|^2|)\quad\hbox{ if }|y|>1,\\
\zeta(k \delta^{-1}|y-\bar \xi_j|)\quad\hbox{ if }|y|\leq 1,
\end{cases}\quad
\underline{\zeta}_j(y):=\overline{\zeta}_j(y_1,y_2,-y_3,\ldots,y_n)\end{equation}
for $j=1,\ldots,k$.
We notice that a function $\phi$ of the form
$$
\phi=\sum_{j=1}^k(\overline{\phi}_j+\underline{\phi}_j)+\psi
$$
is a solution of \eqref{probPhi} provided the functions $\overline{\phi}_j$, $\underline{\phi}_j$ and $\psi$ solve the following system of coupled non linear equations
\begin{equation}\label{system1}
\Delta \ophi_j+p\gamma |\textbf{u}|^{p-1}\ozeta_j\ophi_j+\ozeta_j\left[p\gamma |\textbf{u}|^{p-1}\psi+E+\gamma N(\phi)\right]=0, \; \;j=1,\ldots,k,
\end{equation}
\begin{equation}\label{system2}
\Delta \uphi_j+p\gamma |\textbf{u}|^{p-1}\uzeta_j\uphi_j+\uzeta_j\left[p\gamma |\textbf{u}|^{p-1}\psi+E+\gamma N(\phi)\right]=0, \; \;j=1,\ldots,k,
\end{equation}
\begin{equation}\label{system3}\begin{split}
\Delta\psi&+p\gamma U^{p-1}\psi+\left[p\gamma(|\textbf{u}|^{p-1}-U^{p-1})(1-\sum_{j=1}^k(\ozeta_j+\uzeta_j))\right.\\
&\left.+p\gamma U^{p-1}\sum_{j=1}^k(\ozeta_j+\uzeta_j)\right]\psi+ p\gamma |\textbf{u}|^{p-1}\sum_{j=1}^k(1-\ozeta_j)\ophi_j\\
&+p\gamma |\textbf{u}|^{p-1}\sum_{j=1}^k(1-\uzeta_j)\uphi_j+\left(1-\sum_{j=1}^k(\ozeta_j+\uzeta_j)\right)(E+\gamma N(\phi))=0.
\end{split}\end{equation}
Given our setting it is natural to ask for some symmetry properties on $\ophi_j$ and $\uphi_j$. In particular, denoting $\hat{y}:=(y_1,y_2)$ and $y':=(y_3,\ldots,y_n)$, we want them to satisfy
\begin{equation}\label{cond1}
\ophi_j(\hat{y},y')=\ophi_1(e^{2\pi\frac{ \textcolor{black}{(j-1)}}{k}i} \hat{y},y'),\qquad j=1,\ldots\,\textcolor{black}{k},
\end{equation}
where
\begin{equation}\begin{split}\label{cond2}
&\ophi_1(y_1,\ldots,y_\alpha,\ldots,y_n)=\ophi_1(y_1,\ldots,-y_\alpha,\ldots,y_n),\;\; \alpha=2,4,\ldots,n,\\
&\ophi_1(y)=|y|^{2-n}\ophi_1(|y|^{-2}y),
\end{split}\end{equation}
and
\begin{equation}\label{cond3}
\uphi_1(y)=\ophi_1(y_1,y_2,-y_3,\ldots,y_n).
\end{equation}
\begin{remark}\label{remSym}
The functions $\ophi_j$ and $\uphi_j$ are not even in the third coordinate separately but $\ophi_j+\uphi_j$ is so, that is,
$$(\ophi_j+\uphi_j)(y)=(\ophi_j+\uphi_j)(y_1,y_2,-y_3,\ldots,y_n).$$
Likewise, the functions $(\ozeta_j+\uzeta_j)$ and $(\ozeta_j\ophi_j+\uzeta_j\uphi_j)$ are even in the third coordinate.
\end{remark}
\noindent For $\rho>0$ small and fixed we assume in addition
\begin{equation}\label{cond4}
\|\overline{\ophi}_1\|_*\leq \rho,
\end{equation}
where $\overline{\ophi}_1(y):=\lambda^{\frac{n-2}{2}}\overline{\phi}_1(\oxi_1+\lambda y)$ and $\|\cdot\|_*$ is defined in \eqref{normStar}.

\begin{prop}\label{existPsi}
There exist constants $k_0$, $C$ and $\rho_0$ such that, for all $k\geq k_0$ and $\rho<\rho_0$, if $\ophi_j$ and $\uphi_j$ satisfy conditions \eqref{cond1}-\eqref{cond4} then there exists a unique solution $\psi=\Psi(\overline{\ophi}_1)$ of \eqref{system3} such that
\begin{equation}\label{sym1}
\psi(y_1,\ldots,y_\alpha,\ldots)=\psi(y_1,\ldots,-y_\alpha,\ldots),\;\;\alpha=3,\ldots,n,
\end{equation}
\begin{equation}\label{sym2}\psi(\hat{y},y')=\psi(e^{\frac{2\pi \textcolor{black}{(j-1)}}{k}i} \hat{y},y'),\qquad j=1,\ldots\,\textcolor{black}{k},
\end{equation}
\begin{equation}\label{sym3}\psi(y)=|y|^{2-n}\psi(|y|^{-2}y),
\end{equation}
and
\begin{equation*}\begin{split}
\|\psi\|_*&\leq C\left(\|\overline{\ophi}_1\|_*+k^{1-\frac{n}{q}}\right)\;\;\mbox{if }n\geq 4,\quad
\|\psi\|_*\leq C\left(\|\overline{\ophi}_1\|_*+(\ln k)^{-1}\right)\;\;\mbox{if }n=3.
\end{split}\end{equation*}
Furthermore, given two functions $\phi^1,\,\phi^2$ the operator $\Psi$ satisfies
$$\|\Psi(\phi^1)-\Psi(\phi^2)\|_*\leq C\|\phi^1-\phi^2\|_*.$$
\end{prop}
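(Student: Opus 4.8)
The plan is to solve the nonlinear equation \eqref{system3} for $\psi$ by a fixed point argument, treating the reduced linear operator on the left-hand side as an invertible perturbation of $L_0 = \Delta + p\gamma U^{p-1}$, and using Lemma \ref{existLin} (together with Lemma \ref{estGradient} where gradient bounds on $\psi$ are needed) as the linear solvability tool. Concretely, I would first rewrite \eqref{system3} in the form $L_0(\psi) = -\mathcal{M}(\psi)$, where $\mathcal{M}(\psi)$ collects (i) the potential terms $\big[p\gamma(|\textbf{u}|^{p-1}-U^{p-1})(1-\sum_j(\ozeta_j+\uzeta_j)) + p\gamma U^{p-1}\sum_j(\ozeta_j+\uzeta_j)\big]\psi$, (ii) the coupling terms $p\gamma|\textbf{u}|^{p-1}\sum_j\big[(1-\ozeta_j)\ophi_j + (1-\uzeta_j)\uphi_j\big]$, and (iii) the truncated error and nonlinearity $\big(1-\sum_j(\ozeta_j+\uzeta_j)\big)(E+\gamma N(\phi))$. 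The key estimates, all in the $\|\cdot\|_{**}$ norm, are: the potential term (i) has small $\|\cdot\|_{**}\to\|\cdot\|_{*}$ operator norm because $|\textbf{u}|^{p-1}-U^{p-1}$ is supported away from the origin where it is $O(\la^{(n-2)/2}\cdot(\text{sum of translated bubbles}))$, and $U^{p-1}$ times the cutoffs lives in the small balls $B(\oxi_j,\delta/k)$ where $U^{p-1}$ is already $O(k^{-4})$ small; the coupling term (ii) is controlled via \eqref{cond4} after rescaling, since $(1-\ozeta_j)$ vanishes on $B(\oxi_j,\delta/k)$ and $\|\ophi_j\|$ in the relevant norm is $\lesssim \rho\,\la^{(n-2)/2}$ up to the weight; term (iii) is bounded by $\|E\|_{**}$ via Proposition \ref{Pest2}, giving the $k^{1-n/q}$ (resp.\ $(\ln k)^{-1}$) contributions, plus the Lipschitz/smallness estimates for $N(\phi)$.

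Before invoking the linear theory I must arrange the orthogonality conditions $\int_{\R^n} Z_\alpha\, h = 0$, $\alpha = 1,\dots,n+1$, required by Lemma \ref{existLin}. Here the symmetries are decisive: the right-hand side $-\mathcal{M}(\psi)$ inherits evenness in $y_\alpha$ for $\alpha = 3,\dots,n$ (Remark \ref{remSym} gives that $\sum_j(\ozeta_j+\uzeta_j)$ and $\sum_j(\ozeta_j\ophi_j+\uzeta_j\uphi_j)$ are even in $y_3$, and the other evennesses are built into \eqref{cond2}), together with the $2\pi/k$-rotational invariance in $(y_1,y_2)$ from \eqref{cond1}, and the Kelvin invariance from \eqref{cond2}--\eqref{cond3} and \eqref{ikt}. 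Since $Z_\alpha = \partial_{y_\alpha}U$ is odd in $y_\alpha$, evenness of $h$ in $y_3,\dots,y_n$ kills $\int Z_\alpha h$ for $\alpha = 3,\dots,n$; the rotational invariance in $(y_1,y_2)$ kills $\int Z_1 h$ and $\int Z_2 h$ (for $k\geq 3$ the only rotation-invariant combination is zero); and Kelvin invariance of both $h$ and of the equation forces orthogonality to $Z_{n+1} = y\cdot\nabla U + \frac{n-2}{2}U$, which is the generator of the scaling that Kelvin conjugates to itself — more precisely, a Kelvin-invariant $h$ automatically satisfies $\int U^{p-1}Z_{n+1}\varphi$-type and $\int Z_{n+1}h$ identities by the change of variables $y\mapsto y/|y|^2$. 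So I would state and use a short lemma: functions with symmetries \eqref{sym1}--\eqref{sym3} are automatically $L^2$-orthogonal to $\ker L_0$, hence Lemma \ref{existLin} applies with no extra Lagrange multipliers, and the solution operator preserves these symmetries (by uniqueness, since the problem is equivariant).

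With the linear solvability and the symmetry bookkeeping in place, I would set up the map $\mathcal{T}: \psi \mapsto L_0^{-1}(-\mathcal{M}(\psi))$ on the ball $\{\|\psi\|_* \leq C_*(\|\overline{\ophi}_1\|_* + \varepsilon_k)\}$ inside the subspace of functions satisfying \eqref{sym1}--\eqref{sym3}, where $\varepsilon_k = k^{1-n/q}$ if $n\geq 4$ and $\varepsilon_k = (\ln k)^{-1}$ if $n=3$. The estimates above show $\|\mathcal{T}(\psi)\|_* \leq C\big(\text{small}\cdot\|\psi\|_* + \|\overline{\ophi}_1\|_* + \varepsilon_k + \text{(quadratic in }\psi, \ophi_1)\big)$, so for $k$ large and $\rho$ small $\mathcal{T}$ maps the ball into itself; the Lipschitz character of $N$ (together with the smallness of the potential and coupling coefficients) makes $\mathcal{T}$ a contraction, yielding the unique fixed point $\psi = \Psi(\overline{\ophi}_1)$ with the claimed bound, and the difference estimate $\|\Psi(\phi^1)-\Psi(\phi^2)\|_* \leq C\|\phi^1-\phi^2\|_*$ follows by feeding two data into the same contraction scheme and subtracting. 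The main obstacle I anticipate is not the abstract fixed point but the careful term-by-term verification that the variable-coefficient potential in (i) — in particular the piece $p\gamma(|\textbf{u}|^{p-1}-U^{p-1})$, which is not small pointwise near the bubbles — contributes a genuinely small operator norm once multiplied by $(1-\sum_j(\ozeta_j+\uzeta_j))$ and measured against the $\|\cdot\|_{**}$--$\|\cdot\|_*$ weights; this requires splitting $\R^n$ into the exterior region and annular regions around each $\oxi_j,\uxi_j$ and using \eqref{sumFuera}-type bounds exactly as in the error estimate, and it is the place where the choice $\frac n2 < q < n$ (and later the sharper range) is actually used.
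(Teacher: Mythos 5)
Your proposal is correct and follows essentially the same route as the paper: rewrite \eqref{system3} as a fixed point problem for the operator $\Delta+p\gamma U^{p-1}$ in the class of functions with the symmetries \eqref{sym1}--\eqref{sym3}, use evenness, the $2\pi/k$-rotational invariance and the Kelvin-type symmetry \eqref{symH} of the right-hand side to verify the orthogonality conditions of Lemma \ref{existLin}, estimate the potential, coupling, error and nonlinear terms in the $\|\cdot\|_{**}$ norm (via Proposition \ref{Pest2} and \eqref{cond4}), and conclude by a contraction mapping, exactly as in the paper's proof modeled on \cite[Lemma 4.1]{dPMPP}. The only minor caveat is to state the Kelvin symmetry for the right-hand side in the form \eqref{symH} (weight $|y|^{-n-2}$) rather than \eqref{sym3}, which you do implicitly when performing the change of variables.
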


\begin{proof}
We prove the result by combining a linear theory with a fixed point argument as in \cite[Lemma 4.1]{dPMPP}. Indeed, consider first the problem
\begin{equation}\label{lin}
\Delta \psi +p\gamma U^{p-1}\psi=h,
\end{equation}
with $h$ satisfying \eqref{sym1}, \eqref{sym2}, $\|h\|_{**}<+\infty$ and
\begin{equation}\label{symH}
h(y)=|y|^{-n-2}h(|y|^{-2}y).
\end{equation}
Proceeding as in \cite[Lemma 4.1]{dPMPP} one can apply Lemma \ref{existLin} to conclude the existence of a unique bounded solution $\psi=T(h)$ of \eqref{lin} satisfying symmetries \eqref{sym1}-\eqref{sym3} and
$$\|\psi\|_*\leq C\|h\|_{**},$$
where $C$ is a positive constant depending only on $n$ and $q$.

Let us denote
\begin{equation}\begin{split}\label{V1V2}
V(y)&:=\underbrace{p\gamma(|\textbf{u}|^{p-1}-U^{p-1})\left(1-\sum_{j=1}^k(\ozeta_j+\uzeta_j)\right)}_{V_1(y)}+\underbrace{p\gamma U^{p-1}\sum_{j=1}^k(\ozeta_j+\uzeta_j)}_{V_2(y)},
\end{split}\end{equation}
and
\begin{equation}\label{M}
M(\psi):=\left(1-\sum_{j=1}^k(\ozeta_j+\uzeta_j)\right)(E+\gamma N(\phi)).
\end{equation}
Thus, in order to solve \eqref{system3} by a fixed point argument, we write
$$\psi=-T\left(V\psi+ p\gamma |\textbf{u}|^{p-1}\left(\sum_{j=1}^k(1-\ozeta_j)\ophi_j+\sum_{j=1}^k(1-\uzeta_j)\uphi_j\right)+M(\psi)\right)=:\mathcal{M}(\psi),$$
where $\psi\in X$, the space of continuous functions with $\|\cdot\|_*<+\infty$ and satisfying \eqref{sym1}-\eqref{sym3}. Pointing out Remark \ref{remSym} and the special symmetries of $\textbf{u}$ it can be checked that
$$V\psi+ p\gamma |\textbf{u}|^{p-1}\left(\sum_{j=1}^k(1-\ozeta_j)\ophi_j+\sum_{j=1}^k(1-\uzeta_j)\uphi_j\right)+M(\psi)$$
satisfies \eqref{sym1}, \eqref{sym2} and \eqref{symH} for every $\psi\in X$ and thus $\mathcal{M}(\psi)$ is well defined. Let us see that $\mathcal{M}$ is actually a contraction mapping in the $\|\cdot\|_*$ norm in a small ball around the origin in $X$. Proceeding as in \cite[Lemma 4.1]{dPMPP} we see that
\begin{equation}\label{estV}
\|V\psi\|_{**}\leq \begin{cases}
 Ck^{1-\frac{n}{q}}\|\psi\|_{*} \quad \mbox{ if }n\geq 4,\\
\frac{C}{\ln k}\|\psi\|_* \quad \mbox{ if }n=3,
\end{cases}
\|U^{p-1}\sum_{j=1}^k(\ozeta_j+\uzeta_j)\|_{**}\leq
\begin{cases}
Ck^{1-\frac{n}{q}}\|\oophi_1\|_* \quad \mbox{ if }n\geq 4,\\
\frac{C}{\ln k}\|\oophi_1\|_*  \quad \mbox{ if }n=3,
\end{cases}
\end{equation}
whenever $|y-\oxi_j|>\frac{\delta}{k}$, $|y-\uxi_j|>\frac{\delta}{k}$. Using Proposition \ref{Pest2} we get
\begin{equation}\label{estM}
\|M(\psi)\|_{**}\leq\begin{cases}
Ck^{1-\frac{n}{q}}(1+\|\oophi_1\|_*^2)+C\|\psi\|_*^2 \quad \mbox{ if }n\geq 4,\\
C\frac{1}{\ln k}(1+\|\oophi_1\|_*^2)+C\|\psi\|_*^2 \quad \mbox{ if }n=3.
\end{cases}
\end{equation}
Similarly, for $\psi_1$, $\psi_2$ such that $\|\psi_1\|_*<\rho$, $\|\psi_2\|_*<\rho$ it can be seen that
$$\|M(\psi_1)-M(\psi_2)\|_{**}\leq C\rho \|\psi_1-\psi_2\|_*.$$
This estimate, together with \eqref{estV}-\eqref{estM}, allows us to conclude that for $\rho$ small enough (independent of $k$) the operator $\mathcal{M}$ is a contraction map in the set of functions $\psi\in X$ with
$$\|\psi\|_*\leq
\begin{cases}
C(\|\oophi_1\|_*+k^{1-\frac{n}{q}}) \quad \mbox{ if }n\geq 4,\\
C(\|\oophi_1\|_*+(\ln k)^{-1}) \quad \mbox{ if }n=3.
\end{cases}$$
Then the lemma follows by a fixed point argument.
\end{proof}
We can also establish a uniform bound on the gradient of the function.
\begin{prop}\label{improvedEstimates}
Let $\psi$ be the solution of \eqref{system3} provided by Proposition \ref{existPsi}. Then there exists $C>0$, depending only on $n$, such that
\begin{equation}\label{estPsiGrad}
\|\nabla \psi\|_{L^\infty(\R^n)}\leq C.
\end{equation}
\end{prop}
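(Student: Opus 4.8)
The plan is to derive the gradient bound \eqref{estPsiGrad} in the same spirit as Lemma \ref{estGradient}, but now applied to equation \eqref{system3} rather than the model equation \eqref{linRn}. First I would rewrite \eqref{system3} in the form $\Delta\psi + p\gamma U^{p-1}\psi = \mathcal{H}$, where $\mathcal{H}$ collects all the remaining terms: the potential term $\left[p\gamma(|\textbf{u}|^{p-1}-U^{p-1})(1-\sum_j(\ozeta_j+\uzeta_j))+p\gamma U^{p-1}\sum_j(\ozeta_j+\uzeta_j)\right]\psi$, the coupling terms $p\gamma|\textbf{u}|^{p-1}\sum_j(1-\ozeta_j)\ophi_j + p\gamma|\textbf{u}|^{p-1}\sum_j(1-\uzeta_j)\uphi_j$, and the term $(1-\sum_j(\ozeta_j+\uzeta_j))(E+\gamma N(\phi))$. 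By Proposition \ref{existPsi} we already control $\|\psi\|_*$, and the strategy of Proposition \ref{existPsi} (via the estimates \eqref{estV}, \eqref{estM} and Proposition \ref{Pest2}) shows $\|\mathcal{H}\|_{**}$ is small. The point is that in fact $\mathcal{H}$ satisfies the stronger pointwise bound $|\mathcal{H}(y)|\le C(1+|y|)^{-(n+2)}$, which is exactly the hypothesis required by Lemma \ref{estGradient}: the cut-off factors $1-\sum_j(\ozeta_j+\uzeta_j)$ localize away from the small balls $B(\oxi_j,\delta/k)$, $B(\uxi_j,\delta/k)$ where the potentially singular behaviour of $|\textbf{u}|^{p-1}$ and of $E$ concentrates, so every term of $\mathcal{H}$ decays like $|y|^{-(n+2)}$ at infinity and stays bounded (after weighting) on compact sets.

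The key steps, in order, are: (i) verify $\|(1+|y|^{n+2})\mathcal{H}\|_{L^\infty(\R^n)}\le C$. For the potential term one uses $\|\psi\|_*\le C$ together with the fact that $(|\textbf{u}|^{p-1}-U^{p-1})$ is supported, up to the cut-offs, where $|y-\oxi_j|>\delta/k$, $|y-\uxi_j|>\delta/k$, on which $|\textbf{u}|^{p-1}-U^{p-1}$ is controlled by the sums in \eqref{sumFuera}; the factor $(1+|y|^{n-2})^{-1}$ from $\|\psi\|_*$ supplies the needed decay. For the coupling terms, $(1-\ozeta_j)$ is supported in $|y-\oxi_j|>\delta/k$, and there $\|\ophi_j\|$ is controlled through \eqref{cond4} after rescaling, so these contribute an exponentially (in fact polynomially with a gain in $k$) small, rapidly decaying function. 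For $(1-\sum(\ozeta_j+\uzeta_j))E$ one uses the exterior estimate \eqref{errorFuera}, and for $(1-\sum(\ozeta_j+\uzeta_j))N(\phi)$ one uses the quadratic nature of $N$ together with the bounds on $\psi$ and the $\ophi_j$, $\uphi_j$. (ii) Apply Lemma \ref{estGradient} with $h=\mathcal{H}$ and $\varphi=\psi$ — but here one must be slightly careful, since $\psi$ solves \eqref{linRn} with right-hand side $\mathcal{H}$ only in the sense that it equals the operator $T$ applied to $\mathcal{H}$ (plus the orthogonality conditions are inherited); the proof of Lemma \ref{estGradient}, which only uses interior elliptic estimates, the Kelvin transform, and $\|\psi\|_*<\infty$, goes through verbatim. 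Conclude $\|\nabla\psi\|_{L^\infty}\le C(\|\psi\|_*+\|(1+|y|^{n+2})\mathcal{H}\|_{L^\infty})\le C$.

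The main obstacle I anticipate is step (i), specifically obtaining the pointwise $L^\infty$ bound with weight $(1+|y|)^{n+2}$ on the contribution of $(|\textbf{u}|^{p-1}-U^{p-1})\psi$ near the bubbles and on $N(\phi)$: the $\|\cdot\|_{**}$ (i.e. weighted $L^q$) estimates already established in Proposition \ref{existPsi} are genuinely weaker than an $L^\infty$ bound, so one cannot simply quote them and must redo these estimates pointwise. The cut-off functions $1-\sum_j(\ozeta_j+\uzeta_j)$ are what make this possible — they excise precisely the regions of concentration — but one has to check the book-keeping of the $k$ bubbles carefully, using \eqref{sumFuera} and the decay of $U^{p-1}$, to see that the sum over $j$ of the localized contributions still produces an $|y|^{-(n+2)}$ tail with a constant uniform in $k$. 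Once this pointwise bound is in hand, the rest is an immediate application of the argument already recorded in Lemma \ref{estGradient}.
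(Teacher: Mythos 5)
Your proposal is correct and follows essentially the same route as the paper: the paper's proof likewise bounds each term of \eqref{system3} pointwise with weight $(1+|y|)^{n+2}$ (using \eqref{errorFuera}, \eqref{sumFuera}, the $\|\cdot\|_*$-bounds on $\psi$ and $\oophi_1$, and the cut-offs) and then invokes Lemma \ref{estGradient}. Your remark that Lemma \ref{estGradient} applies because its proof only uses interior elliptic estimates and the Kelvin transform is exactly the justification implicit in the paper.
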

\begin{proof}
The goal is to estimate the terms in \eqref{system3} to apply the a priori estimate in Lemma \ref{estGradient}. Consider $V_1$ and $V_2$ defined in \eqref{V1V2}. Thus,
\begin{equation}\begin{split}\label{psiEst11}
(1+|y|^{n+2})|V_1\psi|&\leq C\|\psi\|_*(1+|y|^4)U^{p-2}\sum_{j=1}^k\left(\frac{\lambda^{\frac{n-2}{2}}}{|y-\oxi_j|^{n-2}}+\frac{\lambda^{\frac{n-2}{2}}}{|y-\uxi_j|^{n-2}}\right)\leq C\|\psi\|_*,\\
&(1+|y|^{n+2})|V_2\psi|\leq \|\psi\|_*(1+|y|^{4})U^{p-1}\leq C\|\psi\|_*.
\end{split}\end{equation}
Analogously, noticing that $(1+|y|^{n+2})U^p\approx 1$, we have
$$
(1+|y|^{n+2})|\textbf{u}|^{p-1}\sum_{j=1}^k(1-\ozeta_j)|\ophi_j|
+ |\textbf{u}|^{p-1}\sum_{j=1}^k(1-\uzeta_j)|\uphi_j|\leq C\|\oophi_1\|_*,
$$
and
\begin{equation}\label{psiEst41}
\begin{split}
(1+|y|^{n+2})\left(1-\sum_{j=1}^k(\ozeta_j+\uzeta_j)\right)|N(\phi)|&\leq (1+|y|^{n+2})U^{p-2}\left(\bigg|\sum_{j=1}^k(\ophi_j+\uphi_j)\bigg|^2+|\psi|^2\right)\\
&\leq C(\|\oophi_1\|_*^2+\|\psi\|_*^2).
\end{split}\end{equation}
On the other hand
$$(1+|y|^{n+2})(1-\sum_{j=1}^k(\ozeta_j+\uzeta_j))|E|\leq C,$$
as a consequence of \eqref{errorFuera} and \eqref{sumFuera}. Using the estimates in Proposition \ref{existPsi} and Lemma \ref{estGradient} we conclude \eqref{estPsiGrad}.
\end{proof}
Let $\psi=\Psi(\overline{\ophi}_1)$ given by Proposition \ref{existPsi}. Notice that, thanks to the imposed conditions \eqref{cond1} and \eqref{cond3}, solving the systems \eqref{system1} and \eqref{system2} can be reduced to solving the equation for $\ophi_1$, that is,
$$\Delta \ophi_1+p\gamma |\textbf{u}|^{p-1}\ozeta_1\ophi_1+\ozeta_1\left[p\gamma |\textbf{u}|^{p-1}\Psi(\overline{\ophi}_1)+E+\gamma N(\phi)\right]=0\mbox{ in }\R^n,$$
or equivalently
\begin{equation}\label{eq1}
\Delta \ophi_1+p\gamma |\overline{U}_1|^{p-1}\ophi_1+\ozeta_1 E+\gamma\mathcal{N}(\overline{\phi}_1,\phi)=0,
\end{equation}
where
$$\mathcal{N}(\overline{\phi}_1,\phi):=p\left(|\textbf{u}|^{p-1}\ozeta_1-|\overline{U}_1|^{p-1}\right)\ophi_1+\ozeta_1\left[p|\textbf{u}|^{p-1}\Psi(\overline{\ophi}_1)+N(\phi)\right].$$
Denote
$$\overline{Z}_\alpha(y):=\lambda^{-\frac{n-2}{2}}Z_\alpha\left(\frac{y-\oxi_1}{\lambda}\right),\;\;\alpha=1,\ldots,n+1$$
where $Z_\alpha$ was defined in \eqref{Zdef}.
In order to solve \eqref{eq1} we will deal first with a projected linear version. Given a general function $\overline{h}$ we consider
\begin{equation}\label{projLin}
\Delta \ophi+p\gamma |\overline{U}_1|^{p-1}\ophi+\oh=c_3\oU_1^{p-1}\oZ_3+c_{n+1}\oU_1^{p-1}\oZ_{n+1},
\end{equation}
where
$$c_3:=\frac{\int_{\R^n}\oh\oZ_3}{\int_{\R^n}\oU_1^{p-1}\oZ^2_3},\qquad c_{n+1}:=\frac{\int_{\R^n}\oh\oZ_{n+1}}{\int_{\R^n}\oU_1^{p-1}\oZ^2_{n+1}}.$$

\begin{lemma}\label{existProjLinear}
Suppose that $\oh$ is even with respect to $y_2,y_4,\ldots,y_n$ and satisfies \eqref{symH}, and assume that $h(y):=\lambda^{\frac{n+2}{2}}\oh(\oxi_1+\lambda y)$ satisfies $\|h\|_{**}<+\infty$.

Then problem \eqref{projLin} has a unique solution $\ophi=\oT(\oh)$ that is even with respect to $y_2,y_4,\ldots,y_n$ and satisfies
$$\ophi(y)=|y|^{2-n}\ophi(|y|^{-2}y),$$
$$\int_{\R^n}\oophi U^{p-1}Z_{n+1}=0,\qquad \int_{\R^n}\oophi U^{p-1}Z_3=0,\qquad \|\oophi\|_*\leq C\|h\|_{**},$$
with $\oophi(y):=\lambda^{\frac{n-2}{2}}\ophi(\oxi_1+\lambda y)$.
\end{lemma}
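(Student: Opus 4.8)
The plan is to pass to the rescaled variables around $\oxi_1$, turning \eqref{projLin} into a projected form of the linear problem \eqref{linRn}, to remove the kernel using the symmetry hypotheses on $\oh$ together with Lemma \ref{existLin}, and finally to transfer the bound and the symmetries back. Concretely, write $\oophi(y):=\la^{\frac{n-2}{2}}\ophi(\oxi_1+\la y)$ and $h(y):=\la^{\frac{n+2}{2}}\oh(\oxi_1+\la y)$. Since $\oU_1(\oxi_1+\la y)=\la^{-\frac{n-2}2}U(y)$, since $(n-2)(p-1)/2=2$, and since $\int_{\R^n}\oU_1^{p-1}\oZ_\alpha\oZ_\beta=\int_{\R^n}U^{p-1}Z_\alpha Z_\beta$ (so that the constants $c_3,c_{n+1}$ in \eqref{projLin} are the scale–invariant quantities $\big(\int_{\R^n}U^{p-1}Z_\alpha^2\big)^{-1}\int_{\R^n}hZ_\alpha$, $\alpha=3,n+1$), every term of \eqref{projLin} is homogeneous of degree $-\tfrac{n+2}2$ in $\la$; dividing out this factor, \eqref{projLin} is equivalent to
\[
L_0\,\oophi=\tilde h:=-h+c_3\,U^{p-1}Z_3+c_{n+1}\,U^{p-1}Z_{n+1}\qquad\text{in }\R^n .
\]

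Next I would check the solvability conditions $\int_{\R^n}\tilde h\,Z_\alpha=0$ for every $\alpha=1,\dots,n+1$, so that Lemma \ref{existLin} applies to $\tilde h$. For $\alpha\in\{3,n+1\}$ this is exactly the choice of $c_3,c_{n+1}$ together with the classical relations $\int_{\R^n}U^{p-1}Z_\alpha Z_\beta=0$ for $\alpha\neq\beta$. For $\alpha\in\{2,4,\dots,n\}$ it holds because $h$ is even in $y_\alpha$ — inherited from the evenness of $\oh$ in the corresponding original coordinate, which is legitimate since $(\oxi_1)_\alpha=0$ — whereas $Z_\alpha=\partial_{y_\alpha}U$ is odd in $y_\alpha$ and $U^{p-1}Z_3,U^{p-1}Z_{n+1}$ are even in $y_\alpha$. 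The delicate case is $\alpha=1$, where one uses the Kelvin identity \eqref{symH}: performing the change of variables $x\mapsto x/|x|^2$ in $\int_{\R^n}\oh\,\partial_{x_1}\oU_1\,dx$, and inserting the Kelvin transform of $\partial_{x_1}\oU_1$ — which, thanks to $\la^2+R^2=1$, is an explicit linear combination of $\oZ_1,\oZ_3,\oZ_{n+1}$ obtained by differentiating $\oU_1(x)=(2\la)^{\frac{n-2}2}(\la^2+|x-\oxi_1|^2)^{-\frac{n-2}2}$ — together with $\int_{\R^n}U^{p-1}Z_1Z_3=\int_{\R^n}U^{p-1}Z_1Z_{n+1}=0$, forces $\int_{\R^n}hZ_1=0$. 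Here it is relevant that, by Proposition \ref{symErrInt}, $\oh$ splits near $\oxi_1$ into a part symmetric in all the variables, which is automatically orthogonal to $Z_1$, plus a much smaller non–symmetric part to which the Kelvin relation is really applied. Granting this, Lemma \ref{existLin} produces a unique $\oophi$ with $\|\oophi\|_*<+\infty$, with $\int_{\R^n}U^{p-1}Z_\alpha\oophi=0$ for all $\alpha$ — in particular the two orthogonality conditions in the statement — and with $\|\oophi\|_*\le C\|\tilde h\|_{**}$.

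To conclude, the bound is upgraded to $\|\oophi\|_*\le C\|h\|_{**}$ via $\|\tilde h\|_{**}\le\|h\|_{**}+|c_3|\,\|U^{p-1}Z_3\|_{**}+|c_{n+1}|\,\|U^{p-1}Z_{n+1}\|_{**}$, noting that the last two norms are finite for $\tfrac n2<q<n$ and that Hölder's inequality in \eqref{normstar2} gives $|c_\alpha|\le C\|h\|_{**}$ because $(1+|y|)^{-(n+2)+\frac{2n}q}Z_\alpha\in L^{q'}(\R^n)$ for $\alpha=3,n+1$. The symmetries of the solution follow by uniqueness: $\tilde h$, $U^{p-1}Z_3$ and $U^{p-1}Z_{n+1}$ are even in $y_2,y_4,\dots,y_n$, and the solution operator of Lemma \ref{existLin} commutes with those reflections, so $\oophi$ is even in $y_2,y_4,\dots,y_n$; likewise $\tilde h$ carries the symmetry induced in the $y$–variable by the Kelvin transform (because $\oh$ satisfies \eqref{symH}, $\oU_1$ is Kelvin invariant, and the projection directions are chosen compatibly with it), whence $\oophi$ inherits it, that is, after undoing the rescaling, $\ophi(y)=|y|^{2-n}\ophi(|y|^{-2}y)$. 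Uniqueness of $\oT(\oh)$ then follows by taking the difference $w$ of two solutions: $L_0w=d_3U^{p-1}Z_3+d_{n+1}U^{p-1}Z_{n+1}$ with $w$ in the same symmetry class and $\int_{\R^n}U^{p-1}Z_3w=\int_{\R^n}U^{p-1}Z_{n+1}w=0$; pairing with $Z_3$ and $Z_{n+1}$ forces $d_3=d_{n+1}=0$, so $w\in\ker L_0$, and the evenness together with the Kelvin invariance and the two orthogonality conditions give $w\equiv0$.

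The main obstacle is precisely the orthogonality $\int_{\R^n}hZ_1=0$: in the rescaled variables the Kelvin symmetry is not a reflection but a conformal involution, and its action on $\mathrm{span}\{Z_1,Z_3,Z_{n+1}\}$ mixes all three generators, so one genuinely has to compute that action — equivalently the Kelvin transform of $\partial_{x_1}\oU_1$ — and then use the decomposition of $\oh$ in Proposition \ref{symErrInt} to control the non–symmetric contribution. Everything else is routine bookkeeping with the weighted norms \eqref{normstar2} and \eqref{normStar}, keeping an eye on compatibility with the stricter range $\tfrac n2<q<\tfrac n{2-2/(n-1)}$ that will be needed later.
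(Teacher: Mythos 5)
Your overall scheme (rescale around $\oxi_1$, account for the two projections, verify the orthogonality hypotheses of Lemma \ref{existLin}, transfer the bound and the symmetries back, and get uniqueness within the symmetry class) is the same as the paper's, and the even-index orthogonalities, the H\"older estimate for $c_3,c_{n+1}$, and the uniqueness argument are fine. The gap is exactly at the one non-routine step, the orthogonality against $Z_1$. You assert that \eqref{symH} ``forces $\int_{\R^n}hZ_1=0$'' for the \emph{original} $h$, invoking only the kernel relations $\int U^{p-1}Z_1Z_3=\int U^{p-1}Z_1Z_{n+1}=0$. That is not what the Kelvin symmetry gives. Writing $K[\varphi](y):=|y|^{2-n}\varphi(y/|y|^2)$ and using $\la^2+|\oxi_1|^2=1$, the change of variables $y\mapsto y/|y|^2$ together with \eqref{symH} yields $\int\oh\,\oZ_1=\int\oh\,K[\oZ_1]$, and a direct computation gives
\begin{equation*}
K[\oZ_1]=\bigl(1-2(\oxi_1)_1^2\bigr)\oZ_1-2(\oxi_1)_1(\oxi_1)_3\,\oZ_3-2(\oxi_1)_1\,\oZ_{n+1},
\end{equation*}
so the only information carried by \eqref{symH} is the single linear relation
\begin{equation*}
(\oxi_1)_1\int_{\R^n}\oh\,\oZ_1+(\oxi_1)_3\int_{\R^n}\oh\,\oZ_3+\int_{\R^n}\oh\,\oZ_{n+1}=0 .
\end{equation*}
Since in your setup you never normalize $h$ (you keep $c_3U^{p-1}Z_3+c_{n+1}U^{p-1}Z_{n+1}$ on the right and need $\int\tilde h Z_1=-\int hZ_1=0$), the projections $\int\oh\oZ_3$ and $\int\oh\oZ_{n+1}$ need not vanish and the relation above does not give $\int hZ_1=0$: for instance $\oh=\oU_1^{p-1}\bigl((\oxi_1)_3\oZ_1-(\oxi_1)_1\oZ_3\bigr)$ is even in $y_2,y_4,\dots,y_n$, satisfies \eqref{symH} (that combination is $K$-invariant), has $\|h\|_{**}<\infty$, but $\int\oh\oZ_1=(\oxi_1)_3\int\oU_1^{p-1}\oZ_1^2\neq0$. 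The paper's proof is organized precisely to avoid this: it first replaces $\oh$ by $\oh-c_3\oU_1^{p-1}\oZ_3-c_{n+1}\oU_1^{p-1}\oZ_{n+1}$, so that the $\oZ_3$ and $\oZ_{n+1}$ projections are zero, and only then uses \eqref{symH} for the $\alpha=1$ case, following the proof of Lemma 4.2 in \cite{dPMPP}. Your argument must be reorganized in that order, and the ``Kelvin step'' must be the relation displayed above, used with the other two projections already removed; the identities $\int U^{p-1}Z_1Z_3=\int U^{p-1}Z_1Z_{n+1}=0$ alone cannot substitute for this.

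A second, related problem is the appeal to Proposition \ref{symErrInt}. The lemma concerns an arbitrary $\oh$ with the stated parity and Kelvin properties, and it is later applied to $\ozeta_1E+\gamma\mathcal{N}(\overline\phi_1,\phi)$, not to $E$ alone; Proposition \ref{symErrInt} provides a symmetric/non-symmetric splitting only of the error $E$ near $\oxi_1$, so it is not available in the proof of this linear lemma and plays no role in the paper's argument here. Once the normalization of $\oh$ is done first, no such decomposition is needed, and the remainder of your write-up (evenness for $\alpha=2,4,\dots,n$, the bound $\|\oophi\|_*\le C\|h\|_{**}$, the transfer of the Kelvin and reflection symmetries, and uniqueness) goes through as in the paper.
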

\begin{proof}
Notice that, up to redefining $\oh$ as $\oh-c_3\oU_1^{p-1}\oZ_3-c_{n+1}\oU_1^{p-1}\oZ_{n+1},$
we can assume
$$\int_{\R^n}\oh\oZ_3=\int_{\R^n}\oh\oZ_{n+1}=0,$$
i.e., $c_3=c_{n+1}=0$ and thus equation \eqref{projLin} is equivalent to
$$\Delta \oophi+p\gamma|U|^{p-1}\oophi=-h \qquad \hbox{in }\mathbb{R}^n.$$
We want to apply \cite[Lemma 3.1]{dPMPP} to solve this problem, and therefore we need to prove that
$$\int_{\R^n} hZ_\alpha=0\,\mbox{ for all }\alpha=1,2,4,5,\ldots,n.$$
This follows straightforward for $\alpha=2,4,5,\ldots,n$ due to the evenness of $h$. The case $\alpha=1$ holds as a consequence of \eqref{symH} (see the proof of \cite[Lemma 4.2]{dPMPP}).
Then the result follows by \cite[Lemma 3.1]{dPMPP}.
\end{proof}
If instead of satisfying condition \eqref{symH} the function $h$ is even in all its coordinates we can prove a similar result (notice that in such case $c_3=0$).
\begin{lemma}\label{exisSim}
Suppose that $h(y):=\lambda^{\frac{n+2}{2}}\oh(\oxi_1+\lambda y)$ is even with respect to $y_\alpha$ for every $\alpha=1,\ldots, n$ and $\|h\|_{**}<+\infty$.
Then problem \eqref{projLin} has a unique solution $\ophi=\oT(\oh)$ that is even with respect to $y_\alpha$ for every $\alpha=1,\ldots,n$ and satisfies
$$\int_{\R^n}\oophi U^{p-1}Z_{n+1}=0,\qquad \|\oophi\|_*\leq C\|h\|_{**},$$
with $\oophi(y):=\lambda^{\frac{n-2}{2}}\ophi(\oxi_1+\lambda y)$.
\end{lemma}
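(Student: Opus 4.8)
The plan is to follow the proof of Lemma \ref{existProjLinear}, which becomes genuinely simpler here because the extra evenness of $h$ in $y_3$ makes the Kelvin constraint \eqref{symH} unnecessary. The first observation I would record is that, since $h(y)=\la^{\frac{n+2}{2}}\oh(\oxi_1+\la y)$ is even in $y_3$ while $Z_3=\partial_{y_3}U$ is odd in $y_3$, we have $\int_{\R^n}\oh\,\oZ_3=0$, hence $c_3=0$; thus the right-hand side of \eqref{projLin} reduces to the single term $c_{n+1}\oU_1^{p-1}\oZ_{n+1}$. In particular this already forces the solution operator to produce a function with the announced symmetries, so the remaining issue is purely the solvability of a problem with a one-dimensional projection.

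Next I would absorb that term into the data exactly as in Lemma \ref{existProjLinear}: replacing $\oh$ by $\oh-c_{n+1}\oU_1^{p-1}\oZ_{n+1}$ we may assume $\int_{\R^n}\oh\,\oZ_{n+1}=0$, i.e. $c_{n+1}=0$, so that in the expanded variable $y\mapsto\oxi_1+\la y$ the equation \eqref{projLin} is equivalent to $\Delta\oophi+p\gamma U^{p-1}\oophi=-h$ in $\R^n$. The subtracted function $\oU_1^{p-1}\oZ_{n+1}$ is even in each $y_\alpha$ (because $U$ is radial and $Z_{n+1}$ is even), so the modified $h$ is still even in every $y_\alpha$, $\alpha=1,\ldots,n$, and its $\|\cdot\|_{**}$-norm is still controlled by $\|h\|_{**}$, via the routine bound $|c_{n+1}|\le C\|h\|_{**}$. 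Now the hypotheses of Lemma \ref{existLin} are all met: $\int_{\R^n}hZ_\alpha=0$ for $\alpha=1,\ldots,n$ holds automatically since $Z_\alpha=\partial_{y_\alpha}U$ is odd in $y_\alpha$ and $h$ is even in $y_\alpha$, while $\int_{\R^n}hZ_{n+1}=0$ by the reduction. Lemma \ref{existLin} then gives a unique $\oophi$ with $\|\oophi\|_*<+\infty$, $\int_{\R^n}U^{p-1}Z_\alpha\oophi=0$ for $\alpha=1,\ldots,n+1$ (in particular for $\alpha=n+1$), and $\|\oophi\|_*\le C\|h\|_{**}$; undoing the change of variables yields $\ophi=\oT(\oh)$.

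It remains to get the evenness of the solution, which I would obtain from uniqueness: fix $\alpha\in\{1,\ldots,n\}$ and let $\oophi^{\alpha}(y):=\oophi(y_1,\ldots,-y_\alpha,\ldots,y_n)$. Since $\Delta$ commutes with this reflection, $U$ is radial, and $h$ is even in $y_\alpha$, the function $\oophi^{\alpha}$ solves the same equation $\Delta\oophi^{\alpha}+p\gamma U^{p-1}\oophi^{\alpha}=-h$; moreover, changing variables in the integrals and using that $Z_{n+1}$ and $Z_\beta$ ($\beta\neq\alpha$) are even while $Z_\alpha$ is odd in $y_\alpha$, one checks $\int_{\R^n}U^{p-1}Z_\beta\,\oophi^{\alpha}=\pm\int_{\R^n}U^{p-1}Z_\beta\,\oophi=0$ for all $\beta$, and $\|\oophi^{\alpha}\|_*=\|\oophi\|_*<+\infty$. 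By the uniqueness statement of Lemma \ref{existLin}, $\oophi^{\alpha}=\oophi$, so $\oophi$ — and hence $\ophi$ — is even in each $y_\alpha$. The only point that needs a small verification, and it is entirely routine (identical to the corresponding step for Lemma \ref{existProjLinear}), is the estimate $|c_{n+1}|\le C\|h\|_{**}$ that propagates the $\|\cdot\|_{**}$ control through the reduction; it follows from Hölder's inequality together with $\|U^{p-1}Z_{n+1}\|_{**}<+\infty$. Everything else is an immediate consequence of Lemma \ref{existLin} and the symmetry bookkeeping, so I do not expect any genuine obstacle in this lemma.
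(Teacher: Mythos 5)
Your proposal is correct and follows essentially the same route as the paper: absorb the projection terms into the data (noting $c_3=0$ by the evenness of $h$ in $y_3$), verify $\int_{\R^n}hZ_\alpha=0$ for $\alpha=1,\ldots,n$ from evenness, and invoke Lemma \ref{existLin} exactly as in Lemma \ref{existProjLinear}. The extra details you supply (the bound $|c_{n+1}|\le C\|h\|_{**}$ and the reflection-plus-uniqueness argument for the evenness of the solution) are routine verifications the paper leaves implicit, and they are carried out correctly.
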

\begin{proof}
The result follows as in the previous Lemma just by noticing that
$$\int_{\R^n}hZ_\alpha=0\mbox{ for all }\alpha=1,\ldots,n,$$
due to the evenness of $h$.
\end{proof}
As a consequence of these lemmas we are able to solve the projected version of \eqref{eq1}. Indeed, consider
\begin{equation}\label{eq2}
\Delta \ophi_1+p\gamma |\overline{U}_1|^{p-1}\ophi_1+\ozeta_1 E+\gamma\mathcal{N}(\overline{\phi}_1,\phi)=c_3\oU_1^{p-1}\oZ_3+c_{n+1}\oU_1^{p-1}\oZ_{n+1},
\end{equation}
with
\begin{equation}\label{c3cn1}
c_3:=\frac{\int_{\R^n}(\ozeta_1 E+\gamma\mathcal{N}(\overline{\phi}_1,\phi))\oZ_3}{\int_{\R^n}\oU_1^{p-1}\oZ^2_3},\qquad c_{n+1}:=\frac{\int_{\R^n}(\ozeta_1 E+\gamma\mathcal{N}(\overline{\phi}_1,\phi))\oZ_{n+1}}{\int_{\R^n}\oU_1^{p-1}\oZ^2_{n+1}}.
\end{equation}

\begin{prop}\label{existPhi1}
There exists a unique solution $\overline{\phi}_1=\overline{\phi}_1(\ell,t)$ of \eqref{eq2}, that satisfies
$$\|\oophi_1\|_*\leq C k^{-\frac{n}{q}}\mbox{ if }n\geq 4,\qquad \|\oophi_1\|_*\leq \frac{C}{k\ln k}\mbox{ if }n=3,$$
and
$$\|\overline{\overline{\mathcal{N}}}(\overline{\phi}_1,\phi)\|_{**}\leq Ck^{-\frac{2n}{q}}\mbox{ if }n\geq 4,\qquad \|\overline{\overline{\mathcal{N}}}(\overline{\phi}_1,\phi)\|_{**}\leq \frac{C}{(k\ln k)^2}\mbox{ if }n=3,$$
where $\oophi_1(y):=\lambda^\frac{n-2}{2}\ophi_1(\oxi_1+\lambda y)$ and $\overline{\overline{\mathcal{N}}}(\overline{\phi}_1,\phi)(y):=\lambda^\frac{n+2}{2}\mathcal{N}(\overline{\phi}_1,\phi)(\oxi_1+\lambda y)$. 
\end{prop}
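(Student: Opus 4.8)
The plan is to solve the projected equation \eqref{eq2} by a fixed-point argument built on the linear theory of Lemmas \ref{existProjLinear} and \ref{exisSim}, together with the symmetric/non-symmetric splitting of Proposition \ref{symErrInt}.

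\emph{Setting up the fixed point.} Given $\overline{\phi}_1$ satisfying the symmetries \eqref{cond2}, the functions $\ophi_j,\uphi_j$ are determined by \eqref{cond1}, \eqref{cond3} and $\psi=\Psi(\overline{\ophi}_1)$ is provided by Proposition \ref{existPsi}, so that $\phi=\sum_{j=1}^k(\ophi_j+\uphi_j)+\psi$ is a function of $\overline{\phi}_1$ alone. With this understood, \eqref{eq2} is equivalent to the fixed-point problem
$$\overline{\phi}_1 \;=\; -\,\oT\!\big(\ozeta_1 E + \gamma\,\mathcal{N}(\overline{\phi}_1,\phi)\big)\;=:\;\mathcal{A}(\overline{\phi}_1),$$
where $\oT$ is the operator produced by Lemma \ref{existProjLinear}, and by Lemma \ref{exisSim} on the part of the data that is even in all coordinates. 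One first checks, using the symmetries \eqref{ip}--\eqref{ir} of $\textbf{u}$, Remark \ref{remSym}, and the invariance properties of $\Psi$ in Proposition \ref{existPsi}, that $\ozeta_1 E+\gamma\mathcal{N}(\overline{\phi}_1,\phi)$ has the evenness in $y_2,y_4,\ldots,y_n$ and the Kelvin symmetry \eqref{symH} required to apply those lemmas; hence $\mathcal{A}$ is well defined on the space of functions obeying \eqref{cond2}.

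\emph{Decomposition and size of the data.} Write $E=E^s+E^*$ as in Proposition \ref{symErrInt} and split accordingly $\overline{\phi}_1=\overline{\phi}_1^{\,s}+\overline{\phi}_1^{\,*}$: the symmetric piece solves \eqref{eq2} with right-hand side built from $\ozeta_1 E^s$ and the part of $\gamma\mathcal{N}$ that is even in every $y_\alpha$, and is handled by Lemma \ref{exisSim} (there $c_3=0$ automatically); the non-symmetric piece absorbs $\ozeta_1 E^*$ and the remaining, smaller, part, via Lemma \ref{existProjLinear}. By \eqref{formufinal} and Proposition \ref{Pest2} the rescaled $\|\cdot\|_{**}$ norm of $\ozeta_1 E^s$ is $\le Ck^{-n/q}$ for $n\ge 4$ and $\le C(k\ln k)^{-1}$ for $n=3$, while by Proposition \ref{symErrInt} the one of $\ozeta_1 E^*$ carries the extra gain in $k$ recorded in \eqref{errPoint}. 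Lemma \ref{existProjLinear} (resp.\ Lemma \ref{exisSim}) then yields $\|\oophi_1\|_*\le C\|\la^{\frac{n+2}{2}}(\ozeta_1 E+\gamma\mathcal{N})(\oxi_1+\la\,\cdot\,)\|_{**}$.

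\emph{The nonlinear estimate.} The crucial step is to bound $\overline{\overline{\mathcal{N}}}(\overline{\phi}_1,\phi)$ in $\|\cdot\|_{**}$ for $\overline{\phi}_1$ in the ball $\|\oophi_1\|_*\le Ck^{-n/q}$ (resp.\ $\le C(k\ln k)^{-1}$). Writing
$$\mathcal{N}(\overline{\phi}_1,\phi)=p\big(|\textbf{u}|^{p-1}\ozeta_1-|\oU_1|^{p-1}\big)\ophi_1+\ozeta_1\big[p|\textbf{u}|^{p-1}\Psi(\overline{\ophi}_1)+N(\phi)\big],$$
one estimates each term after the change of variables $x=\oxi_1+\la y$. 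On $\mathrm{supp}\,\ozeta_1$ the difference $|\textbf{u}|^{p-1}-|\oU_1|^{p-1}$ is controlled by the bubble interactions, whose size after rescaling is, by \eqref{e0}, \eqref{uno}, \eqref{due}, of order $\la^{\frac{n-2}{2}}k^{n-2}=\ell\cdot\mathrm{const}$; combined with the factor $|\oU_1|^{p-2}$ and the weight this produces a bound quadratic in $\|\oophi_1\|_*$. The term with $\Psi$ is controlled through the bound on $\|\psi\|_*$ in Proposition \ref{existPsi} together with a sharper estimate for $\psi$ on $B(\oxi_1,\delta/k)$, and $N(\phi)$ is estimated by $|\oU_1|^{p-2}(|\ophi_1|^2+|\psi|^2)+|\phi|^p$ on $\mathrm{supp}\,\ozeta_1$. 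It is precisely here that the restricted range $\frac n2<q<\frac{n}{2-2/(n-1)}$ is used, so that all these contributions are $\le Ck^{-2n/q}$ (resp.\ $\le C(k\ln k)^{-2}$). A parallel computation of $\mathcal{A}(\overline{\phi}_1^{(1)})-\mathcal{A}(\overline{\phi}_1^{(2)})$ shows that for $k$ large $\mathcal{A}$ is a contraction on the ball of radius $Ck^{-n/q}$ (resp.\ $C(k\ln k)^{-1}$) in the rescaled $\|\cdot\|_*$ norm; the contraction mapping theorem yields the unique $\overline{\phi}_1=\overline{\phi}_1(\ell,t)$ with $\|\oophi_1\|_*\le Ck^{-n/q}$ (resp.\ $\le C(k\ln k)^{-1}$), and inserting this bound back into the nonlinear estimate gives $\|\overline{\overline{\mathcal{N}}}(\overline{\phi}_1,\phi)\|_{**}\le Ck^{-2n/q}$ (resp.\ $\le C(k\ln k)^{-2}$).

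\emph{Main obstacle.} I expect the hard part to be the nonlinear estimate. Since the interaction between the central bubble $U$, the $\oU_j$ and the $\uU_j$ is only $O(1)$ — not small — after rescaling near $\oxi_1$, the required quadratic smallness of $\mathcal{N}$ has to be extracted entirely from $\|\oophi_1\|_*$ and from the decay of $\psi$ near $\oxi_1$; bookkeeping the exact powers of $\la$ and $k$ across the change of variables, and obtaining an estimate for $\psi$ restricted to $B(\oxi_1,\delta/k)$ sharper than the global $\|\psi\|_*$, is the delicate heart of the argument and is what forces the stronger restriction on $q$.
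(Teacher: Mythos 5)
Your overall skeleton --- recasting \eqref{eq2} as a fixed point $\ophi_1=\oT\big(\ozeta_1E+\gamma\mathcal N(\overline\phi_1,\phi)\big)$ for the projected linear operator of Lemma \ref{existProjLinear}, run on a ball of radius $Ck^{-n/q}$ (resp.\ $C(k\ln k)^{-1}$), with $\phi$ determined from $\overline\phi_1$ through \eqref{cond1}, \eqref{cond3} and Proposition \ref{existPsi} --- is the paper's. But your central nonlinear estimate is mis-argued, and this is a genuine gap. You claim that after rescaling near $\oxi_1$ the interaction is of order $\la^{\frac{n-2}{2}}k^{n-2}=O(1)$ and that, combined with $|\oU_1|^{p-2}$ and the weight, the term $p\,\ozeta_1(|\textbf{u}|^{p-1}-|\oU_1|^{p-1})\ophi_1$ comes out \emph{quadratic} in $\|\oophi_1\|_*$. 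Neither assertion is correct: this term is linear in $\ophi_1$, and $\la^{\frac{n-2}{2}}k^{n-2}\sim\ell$ is the \emph{unrescaled} interaction. In the rescaled variables one has $\la^{\frac{n-2}{2}}\textbf{u}(\oxi_1+\la y)=-U(y)+O(\la^{\frac{n-2}{2}})$, because $\la^{\frac{n-2}{2}}\sum_{j\neq1}\oU_j(\oxi_1+\la y)\sim\la^{n-2}k^{n-2}\sim\la^{\frac{n-2}{2}}\ell$ and $\la^{\frac{n-2}{2}}U(\oxi_1+\la y)=O(\la^{\frac{n-2}{2}})$; hence the coefficient multiplying $\oophi_1$ is $O(\la^{\frac{n-2}{2}})\,U(y)^{p-2}$, which is exactly the paper's estimate \eqref{f1}, $|\tilde f_1|\le C\la^{\frac{n-2}{2}}U^{p-2}|\oophi_1|$, giving $\|\tilde f_1\|_{**}\le C\la^{\frac{n}{2q}}\|\oophi_1\|_*$ and similarly $\|\tilde f_i\|_{**}\le C\la^{\frac{n}{2q}}\big(\|\oophi_1\|_*+k^{1-\frac nq}\big)$ for the terms involving $\Psi(\overline{\ophi}_1)$ and $N(\phi)$. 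If the coefficient were genuinely $O(1)$, the weighted $L^q$ norm over $|y|<\delta/(\la k)$ would grow like a positive power of $k$, the map would not be a contraction with small constant, and no choice of $q\in\big(\tfrac n2,\tfrac n{2-2/(n-1)}\big)$ would produce the bound $\|\overline{\overline{\mathcal N}}(\overline\phi_1,\phi)\|_{**}\le Ck^{-2n/q}$. The smallness you must extract is this factor $\la^{\frac{n-2}{2}}$ (equivalently $\la^{\frac n{2q}}\sim k^{-n/q}$ after integration), not a quadratic gain in $\|\oophi_1\|_*$, and not a refined estimate of $\psi$ near $\oxi_1$.

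Separately, the symmetric/non-symmetric machinery you invoke ($E=E^s+E^*$ from Proposition \ref{symErrInt}, Lemma \ref{exisSim}, and a split $\overline\phi_1=\overline\phi_1^{\,s}+\overline\phi_1^{\,*}$) is neither used nor needed for this proposition. The paper proves Proposition \ref{existPhi1} by the direct fixed point above, decomposing only the nonlinearity as $f_1+f_2+f_3+f_4$ (plus $f_5=\ozeta_1E$, estimated via \eqref{formufinal} and Proposition \ref{Pest2}) and bounding each piece in $\|\cdot\|_{**}$. The decomposition of $\oophi_1$ into an even part and a smaller remainder is established \emph{afterwards}, in Proposition \ref{estSimPart}, and the sharper splitting of $\psi$ is Proposition \ref{symEstPsi}; both enter only in the reduction (Claim 6), where the cancellation against $\oZ_3$ is required. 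So beyond the structural detour, the proposal as written does not close the key estimate on which both the contraction and the stated bound on $\overline{\overline{\mathcal N}}$ rest.
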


\begin{proof}
We will solve \eqref{eq2} by means of a fixed point argument, writting
$$\ophi_1=\oT(\ozeta_1 E+\gamma\mathcal{N}(\overline{\phi}_1,\phi))=:\mathcal{M}(\phi_1),$$
where $\oT$ is the linear operator specified in Lemma \ref{existProjLinear}.
 To do so, we begin analyzing the nonlinear term, that can be decomposed as
$$\mathcal{N}(\overline{\phi}_1,\phi)=f_1+f_2+f_3+f_4,$$
where
$$f_1:=p\ozeta_1(|{\bf u}_2|^{p-1}-\oU_1^{p-1})\ophi_1,\quad f_2:=(\ozeta_1-1)\oU_1^{p-1}\ophi_1,$$
$$f_3:=\ozeta_1 p|{\bf u}_2|^{p-1}\Psi(\ophi_1),\quad f_4:=\ozeta_1N(\phi_1).$$
To estimate these terms we proceed in the same way as \cite[Proposition 4.1]{dPMPP}, so we just highlight the differences. Given a general function $f$, let us denote $\tilde{f}(y):=\lambda^\frac{n+2}{2}f(\oxi_1+\lambda y)$.
Assume first $n\geq 4$. Thus, noticing that
$$\sum_{j=1}^kU(y+\lambda^{-1}(\uxi_j-\oxi_1))\leq C\lambda^{n-2}\left(k^{n-2}\sum_{j=1}^k\frac{1}{j^{n-2}}+\frac{1}{(2\tau)^{n-2}}\right)\leq C\lambda^{n-2},$$
and using Proposition \ref{existPsi} one gets
\begin{equation}\label{f1}
|\tilde{f}_1(y)|\leq C\lambda^{\frac{n-2}{2}}U(y)^{p-2}|\oophi_1(y)|\;\;\hbox{ for }|y|<\frac{\delta}{\lambda k},\quad \|\tilde{f}_1\|_{**}\leq C \lambda ^{\frac{n}{2q}}\|\oophi_1\|_*,
\end{equation}
$$
|\tilde{f}_2(y)|\leq C U(y)^{p-1}|\oophi_1(y)|\;\;\hbox{ for }|y|>c\lambda^{-1/2},\quad\|\tilde{f}_2\|_{**}\leq C\lambda^{\frac{n}{2q}}\|\oophi_1\|_*,
$$
$$
|\tilde{f}_3(y)|\leq C U^{p-1}(y)\lambda^{\frac{n-2}{2}}|\psi(\oxi_1+\lambda y)|\;\;\hbox{ for }|y|>c\lambda^{-1/2},\quad \|\tilde{f}_3\|_{**}\leq C\lambda^{\frac{n}{2q}}(\|\oophi_1\|_*+k^{1-\frac{n}{q}}).
$$
Notice that
$$\tilde{N}(\phi)=|V_*+\hat{\phi}|^{p-1}(V_*+\hat{\phi})-|V_*|^{p-1}V_* -p|V_*|^{p-1}\hat{\phi},$$
where $\hat{\phi}(y):=\lambda^{\frac{n-2}{2}}\phi (\oxi_1+\lambda y)$ and
$$V_*(y):=-U(y)-\sum_{j\neq 1}U(y-\lambda^{-1}(\oxi_j-\oxi_1))-\sum_{j=1}^k U(y-\lambda^{-1}(\uxi_j-\oxi_1))+\lambda^\frac{n-2}{2}U(\oxi_1+\lambda y).$$
Hence
$$
|\tilde{f}_4(y)|\leq CU(y)^{p-2}(|\oophi_1(y)|^2+\lambda^{n-2}|\psi(\oxi_1+\lambda y)|^2)\;\;\hbox{ for }|y|<\frac{\delta}{\lambda k},\quad \|\tilde{f}_4\|_{**}\leq C\lambda^{\frac{n}{2q}}(\|\oophi_1\|_*+k^{1-\frac{n}{q}}).
$$
Finally, defining $f_5:=\ozeta_1E$ and using estimate \eqref{formufinal} we also have
$$\|\tilde{f}_5\|_{**}\leq C\lambda^{\frac{n}{2q}}.$$
In the case $n=3$ one has
\begin{equation}\label{ftilde1}
\|\tilde{f}_1\|_{**}\leq \frac{C}{k\ln k}\|\oophi_1\|_*, \quad \|\tilde{f}_2\|_{**}\leq \frac{C}{k\ln k}\|\oophi_1\|_*,\quad \|\tilde{f}_3\|_{**}\leq \frac{C}{k\ln k}\left(\|\oophi_1\|_*+\frac{1}{k\ln k}\right),
\end{equation}
\begin{equation}\label{ftilde2}
\|\tilde{f}_4\|_{**}\leq \frac{C}{k\ln k}\left(\|\oophi_1\|_*+\frac{1}{k\ln k}\right),\quad \|\tilde{f}_5\|_{**}\leq \frac{C}{k\ln k}.
\end{equation}
Applying Proposition \ref{Pest2}, estimates \eqref{f1}-\eqref{ftilde2}, Proposition \ref{existPsi} and Lemma \ref{existProjLinear}, we conclude that $\mathcal{M}$ is a contraction that maps functions $\overline{\ophi}$ with
$$\|\oophi\|_*\leq C k^{-\frac{n}{q}}\mbox{ if }n\geq 4,\qquad \|\oophi\|_*\leq \frac{C}{k\ln k}\mbox{ if }n=3,$$
into the same class of functions whenever $\frac{n}{2}<q<\frac{n}{2-\frac{2}{n-1}}$. Analogously, it can be proved the  Lipschitz character of the operators and thus, applying a fixed point argument, we conclude the proof.
\end{proof}

\begin{prop}\label{improvedEstimates2}
Let $\ophi_1$ be the solution of \eqref{eq2} provided by Proposition \ref{existPhi1}. Then there exists $C>0$, depending only on $n$, such that
$$|\oophi_1(y)|\leq C\frac{\lambda^{\frac{n-2}{2}}}{(1+|y|)^\alpha}\quad\mbox{ where }\begin{cases}
\alpha=2\;\;\mbox{ if }n\geq 5,\\
\alpha=1\;\;\mbox{ if }n=4,\\
0<\alpha<1\;\;\mbox{ if }n=3,
\end{cases}$$
where $\oophi_1(y):=\lambda^{\frac{n-2}{2}}\ophi_1(\oxi_1+\lambda y)$. 
\end{prop}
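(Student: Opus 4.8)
The plan is to upgrade the $\|\cdot\|_*$-bound of Proposition~\ref{existPhi1} to a pointwise weighted estimate, after noting that, rescaled around $\oxi_1$, the right-hand side of \eqref{eq2} decays like $(1+|y|)^{-4}$ with amplitude $\lambda^{\frac{n-2}2}$, and that the inverse of $L_0=\Delta+p\gamma U^{p-1}$ on the relevant orthogonality class sends such a datum into a function decaying exactly like $(1+|y|)^{-\alpha}$.

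First I would rewrite \eqref{eq2} in the variable $x=\oxi_1+\lambda y$: with $\oU_1(x)=\lambda^{-\frac{n-2}2}U(y)$ and $\oZ_\alpha(x)=\lambda^{-\frac{n-2}2}Z_\alpha(y)$ one gets $L_0\oophi_1=g$ in $\R^n$, where $g(y)=-\lambda^{\frac{n+2}2}\big(\ozeta_1E+\gamma\mathcal N(\ophi_1,\phi)\big)(\oxi_1+\lambda y)+c_3U^{p-1}Z_3+c_{n+1}U^{p-1}Z_{n+1}$; by Lemma~\ref{existProjLinear}, $\oophi_1$ lies in $\{\varphi:\int_{\R^n}U^{p-1}Z_\alpha\varphi=0,\ \alpha=1,\dots,n+1\}$ (the evenness in $y_2,y_4,\dots,y_n$ and the Kelvin symmetry \eqref{symH} handle $Z_1,Z_2,Z_4,\dots,Z_n$, while $Z_3,Z_{n+1}$ are removed by the projection), so Lemma~\ref{existLin} applies. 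The next step is a pointwise bound $|g(y)|\le C\lambda^{\frac{n-2}2}(1+|y|)^{-4}$ for all $y\in\R^n$: on $B(0,\delta/(\lambda k))$ the rescaled error obeys $|\lambda^{\frac{n+2}2}\gamma^{-1}E(\oxi_1+\lambda y)|\le C\big[\lambda^{\frac{n-2}2}(1+|y|)^{-4}+\lambda^{\frac{n+2}2}\big]$ (the estimate established just before \eqref{formufinal}), and there $\lambda^{\frac{n+2}2}\le C\lambda^{\frac{n-2}2}(1+|y|)^{-4}$ because $(1+|y|)^{-4}\ge c\lambda^2$ on that ball (whose radius is comparable to $\lambda^{-1/2}$), while $\ozeta_1E\equiv0$ outside; feeding $\|\oophi_1\|_*\le Ck^{-n/q}$ (resp.\ $C(k\ln k)^{-1}$) and Proposition~\ref{existPsi} into the pointwise bounds for $\tilde f_1,\dots,\tilde f_4$ from the proof of Proposition~\ref{existPhi1} controls the rescaled nonlinear term by $Ck^{-n/q}\lambda^{\frac{n-2}2}(1+|y|)^{-4}$; and $|c_3|,|c_{n+1}|\le C\lambda^{\frac{n-2}2}$ since $|\int_{\R^n}gZ_\alpha|\le C\lambda^{\frac{n-2}2}\int_{\R^n}U^{p-1}(1+|z|)^{2-n}\,dz<\infty$ and $\int_{\R^n}U^{p-1}Z_\alpha^2\asymp1$.

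It remains to prove the refined linear estimate: if $h$ satisfies the hypotheses of Lemma~\ref{existProjLinear} and $M:=\|(1+|y|)^4h\|_{L^\infty}<\infty$, then $\varphi:=\oT(h)$ satisfies $|\varphi(y)|\le CM(1+|y|)^{-\alpha}$ with $C=C(n)$ and $\alpha$ as in the statement; applied with $h=g$, $M\le C\lambda^{\frac{n-2}2}$, this gives the proposition. When $n=3$ or $4$ it follows at once from Lemma~\ref{existLin}: picking $q<(n+1)/(n-2)$ inside the admissible range makes the weight of $\|\cdot\|_{**}$ integrable to the power $q$ against $(1+|y|)^{-4}$ over all of $\R^n$, so $\|h\|_{**}\le CM$, $\|\varphi\|_*\le CM$, and $(1+|y|)^{-(n-2)}\le(1+|y|)^{-\alpha}$. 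For $n\ge5$, where $\|\cdot\|_{**}$ no longer sees this, I would use the Green function $\mathcal G(y,z)$ of $L_0$ on $\{U^{p-1}Z_\alpha\}^\perp$ — well defined since $0$ is not an eigenvalue of that restriction, symmetric, with the Newtonian bound $|\mathcal G(y,z)|\le C|y-z|^{2-n}$ (its singularity and decay being those of $\Delta^{-1}$ up to bounded, fast-decaying corrections from the potential and the finite-rank projection). Writing $\varphi(y)=\int_{\R^n}\mathcal G(y,z)h(z)\,dz$ and splitting over $\{|z|<|y|/2\}$, $\{|y-z|<|y|/2\}$ and $\{|z|>2|y|\}$, the bound $|h(z)|\le M(1+|z|)^{-4}$ gives, for $|y|\ge1$: the middle region contributes $\le CM|y|^{-4}\int_{|w|<|y|}|w|^{2-n}dw=O(M|y|^{-2})$; the last $\le CM\int_{|z|>2|y|}|z|^{-n-2}dz=O(M|y|^{-2})$; the first $\le CM|y|^{2-n}\int_{|z|<|y|}(1+|z|)^{-4}dz=O(M|y|^{-2})$ for $n\ge5$ (it would be $O(M|y|^{-2}\log|y|)$ for $n=4$ and $O(M|y|^{-1})$ for $n=3$). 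With $|\varphi(y)|\le CM$ for $|y|\le1$ this yields $|\varphi(y)|\le CM(1+|y|)^{-2}$, i.e.\ $\alpha=2$. (Equivalently, the exterior decay comes from the supersolution $W=C_0M(1+|y|)^{-\alpha}$ via $\Delta(1+|y|)^{-\alpha}=\alpha(\alpha+2-n)(1+|y|)^{-\alpha-2}(1+o(1))$: with $0<\alpha<n-2$ the sign is right and with $\alpha+2\le4$ one gets $L_0W\le-c_nC_0M(1+|y|)^{-4}\le-|h|$ for $|y|\ge R_0$ and $C_0$ large, a comparison principle for $L_0$ being available on $\{|y|>R_0\}$ because its potential decays (one can use $|y|^{-\sigma}$, $0<\sigma<n-2$ small, as an auxiliary positive supersolution of $-L_0$); the Green function is then what bounds $\varphi$ on the fixed ball $\{|y|\le R_0\}$ by $M$.)

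The real difficulty is precisely this last point for $n\ge5$: there $\|g\|_{**}\sim\lambda^{\frac{n-2}2}(\lambda k)^{-(n-2)+n/q}\sim k^{-n/q}\gg\lambda^{\frac{n-2}2}$, because the weight $(1+|y|)^{n+2-2n/q}$ over-weights the annulus $|y|\sim\delta/(\lambda k)$ on which $g$ is in fact tiny, so the $\|\cdot\|_*$-bound alone cannot detect that $\oophi_1$ has size $\lambda^{\frac{n-2}2}$ near the origin — only the pointwise (Green function, or barrier) representation does. The dimension thresholds for $\alpha$ are forced by the identity for $\Delta(1+|y|)^{-\alpha}$ above, which requires $0<\alpha<n-2$ for the correct sign and $\alpha+2\le4$ to beat the $(1+|y|)^{-4}$ source: this gives $\alpha=2$ if $n\ge5$, $\alpha=1$ if $n=4$ (where $\alpha=2$ is resonant) and any $0<\alpha<1$ if $n=3$.
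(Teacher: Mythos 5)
Your strategy coincides with the paper's up to its key step: after rescaling around $\oxi_1$, both you and the paper reduce \eqref{eq2} to a linear problem for $\oophi_1$ whose right-hand side is bounded pointwise by $C\lambda^{\frac{n-2}{2}}(1+|y|)^{-4}$ (you absorb the potential difference $p\gamma(|\textbf{u}|^{p-1}\ozeta_1-|\oU_1|^{p-1})$ into the data via the $\|\cdot\|_*$ bound of Proposition~\ref{existPhi1}, while the paper keeps it on the left as a perturbation $a(y)$ with $|a|\leq CU^{p-1}$ --- both are fine), and both then invoke a linear pointwise-decay estimate. The difference is how that estimate is obtained: the paper simply cites \cite[Lemma 3.2]{dPMPP} with $\nu=4$ for $n\geq 5$, $\nu=3$ for $n=4$ and $2<\nu<3$ for $n=3$, whereas you reconstruct it, via the $\|\cdot\|_{**}$ theory in low dimensions and a Green-function/barrier argument for $n\geq5$. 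Your diagnosis of why the $\|\cdot\|_{**}$--$\|\cdot\|_*$ theory alone cannot see the amplitude $\lambda^{\frac{n-2}{2}}$ when $n\geq5$, and the dimensional thresholds read off from $\Delta(1+|y|)^{-\alpha}$, are exactly the content of the cited lemma; re-deriving it is a legitimate, self-contained alternative, but then the uniform Newtonian bound $|\mathcal{G}(y,z)|\leq C|y-z|^{2-n}$ for the Green function of $L_0$ on the orthogonal complement of its kernel must actually be proved, not asserted: it is needed even just to get $|\oophi_1|\leq C\lambda^{\frac{n-2}{2}}$ on a fixed ball, where the a priori information $\|\oophi_1\|_*\leq Ck^{-n/q}$ is far too weak, and it is precisely what one would otherwise quote from \cite{dPMPP}.

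There is one concrete error: the claim that $n=4$ ``follows at once'' from Lemma~\ref{existLin}. For $|h|\leq M(1+|y|)^{-4}$ one has $\|h\|_{**}\leq CM$ only if $(n-2)q<n$, i.e.\ $q<\frac{n}{n-2}$ (not $q<\frac{n+1}{n-2}$); for $n=4$ this forces $q<2$, incompatible with the standing assumption $q>\frac{n}{2}=2$, so no admissible $q$ exists and that branch collapses (for $n=3$ every admissible $q<3$ works, so that case is correct and even gives $\alpha=1$). The repair is already inside your proposal: the Green-function/barrier computation you restrict to $n\geq5$ extends verbatim to $n=4$ and yields, as you note parenthetically, $O(M|y|^{-2}\log|y|)$, hence the stated $\alpha=1$; this resonance of $\alpha=n-2$ is exactly why the paper takes $\nu=3$ rather than $\nu=4$ in dimension $4$, and $2<\nu<3$ in dimension $3$. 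A last minor point: for $n=3$ the interior region has radius $\delta/(\lambda k)\sim k(\ln k)^2$, which exceeds $\lambda^{-1/2}\sim k\ln k$, so absorbing the constant $\lambda^{\frac{n+2}{2}}$ into $\lambda^{\frac{n-2}{2}}(1+|y|)^{-4}$ loses a power of $\ln k$ near the edge of that region; the slack $\alpha<1$ in the three-dimensional statement (and the choice $2<\nu<3$ in the paper) absorbs this, so it does not affect the conclusion.
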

\begin{proof}
Denote $L_0(\phi):=\Delta \phi+p\gamma|U|^{p-1}\phi$. Thus, \eqref{eq2} can be written in the form
$$L_0(\oophi_1)+a(y)\oophi_1=g(y)+c_3U^{p-1}Z_3+c_{n+1}U^{p-1}Z_{n+1},$$
with $a(y):=\lambda^2p\gamma(|\textbf{u}|\ozeta_1-|\oU_1|^{p-1})(\oxi_1+\lambda y)$. Hence
$$|a(y)|\leq CU^{p-1}, \qquad |g(y)|\leq C\frac{\lambda^\frac{n-2}{2}}{(1+|y|)^4}.$$
Applying \cite[Lemma 3.2]{dPMPP} with $\nu=4$ for $n\geq 5$, $\nu=3$ for $n=4$ and $2<\nu<3$ for $n=3$ we get the desired estimates on $\oophi_1$. 
\end{proof}
To perform the reduction procedure in section \ref{thm} we will need more precise estimates on the pointwise behavior of $\ophi_1$, in particular on the part that will not be orthogonal to the kernel, whose size is smaller.
\begin{prop}\label{estSimPart}
Let $\ophi_1$ be the solution of \eqref{eq2} provided by Proposition \ref{existPhi1} and denote $\oophi_1(y):=\lambda^{\frac{n-2}{2}}\ophi_1(\oxi_1+\lambda y)$. Then there exists a decomposition 
$\oophi_1=\overline{\overline{\phi^s_1}}+\overline{\overline{\phi^*_1}}$ such that $\overline{\overline{\phi^s_1}}$ is even with respect to $y_\alpha$ for every $\alpha=1,\ldots,n$ and
\begin{equation}\label{estPhiNonSym}|\overline{\overline{\phi^*_1}}(y)|\leq 
\begin{cases}
C \frac{\lambda^{\frac{n-2}{2}}}{k}\frac{1}{1+|y|}\mbox{ if }n\geq 4,\\
C\frac{\lambda^{1/2}}{k(\ln k)^3}\frac{1}{1+|y|^\alpha}\quad 0<\alpha<1,\quad \mbox{ if }n=3.
\end{cases}.
\end{equation}
\end{prop}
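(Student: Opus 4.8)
The idea is to carry the symmetric/non-symmetric splitting of Proposition~\ref{symErrInt} to the level of the correction itself, and then to gain the extra decay by the device already used in Proposition~\ref{improvedEstimates2}. Let $P$ denote the average over the coordinate reflections $\sigma\in\{-1,1\}^n$, $(Pw)(y):=2^{-n}\sum_{\sigma}w(\sigma_1y_1,\dots,\sigma_ny_n)$, and set
$$\overline{\overline{\phi^s_1}}:=P\,\overline{\overline{\phi_1}},\qquad \overline{\overline{\phi^*_1}}:=\overline{\overline{\phi_1}}-\overline{\overline{\phi^s_1}}.$$
By construction $\overline{\overline{\phi^s_1}}$ is even in every $y_\alpha$ and $\|\overline{\overline{\phi^s_1}}\|_*\le\|\overline{\overline{\phi_1}}\|_*$. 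Moreover $\overline{\overline{\phi_1}}$ is orthogonal in $L^2(U^{p-1}\,dy)$ to $Z_3$ and $Z_{n+1}$ by Lemma~\ref{existProjLinear}, to $Z_2,Z_4,\dots,Z_n$ by its evenness in those variables, and to $Z_1$ by its Kelvin invariance; hence both $\overline{\overline{\phi^s_1}}$ and $\overline{\overline{\phi^*_1}}$ are orthogonal to the whole of $\ker L_0$, and only the reflections in $y_1$ and $y_3$ are really at play. It remains to prove the bound \eqref{estPhiNonSym} for $\overline{\overline{\phi^*_1}}$.

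The first step is to write down the equation for $\overline{\overline{\phi^*_1}}$. Starting from the rescaled form of \eqref{eq2} recorded in the proof of Proposition~\ref{improvedEstimates2},
$$L_0(\overline{\overline{\phi_1}})+a(y)\,\overline{\overline{\phi_1}}=g(y)+c_3\,U^{p-1}Z_3+c_{n+1}\,U^{p-1}Z_{n+1},$$
with $|a|\le CU^{p-1}$ and $g=-\lambda^{\frac{n+2}{2}}\big(\overline\zeta_1E+\gamma(f_3+f_4)\big)(\oxi_1+\lambda y)$, I apply $P$ and subtract. Since $L_0$ commutes with the reflections (because $U$ is radial), since $P(bw)=b\,Pw$ for $b$ even in all variables, and since $Z_3=\partial_{y_3}U$ is odd in $y_3$ while $Z_{n+1}$ is even in all variables, one gets, writing $a=a^s+a^*$ with $a^s:=Pa$,
$$L_0(\overline{\overline{\phi^*_1}})+a^s\,\overline{\overline{\phi^*_1}}=g^*+c_3\,U^{p-1}Z_3-(I-P)\big(a^*\,\overline{\overline{\phi_1}}\big),$$
where $g^*:=(I-P)g$, $|a^s|\le CU^{p-1}$, the only projection term surviving the averaging is the one along $Z_3$, and $a^*$ is supported in $\{|y|\lesssim\delta/(\lambda k)\}$ (for $|y|$ larger $\overline\zeta_1=0$ and $a=-p\gamma U^{p-1}$ is even).

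The core of the argument is to bound this right-hand side by $C\,\frac{\lambda^{\frac{n-2}{2}}}{k}\,\frac{1}{1+|y|^{3}}$ when $n\ge4$, and by $C\,\frac{\lambda^{1/2}}{k(\ln k)^{3}}\,\frac{1}{1+|y|^{3}}$ when $n=3$. For the term coming from the error this is precisely Proposition~\ref{symErrInt}, since $\overline\zeta_1$ is radial in the rescaled variable and so $(\overline\zeta_1E)^*$ is controlled by $|\lambda^{\frac{n+2}{2}}E^*|$. For the contributions of $f_3=\overline\zeta_1p|\textbf{u}|^{p-1}\Psi(\overline{\overline{\phi_1}})$ and $f_4=\overline\zeta_1N(\phi)$ one uses the Taylor expansions \eqref{e0}, Proposition~\ref{existPsi}, the gradient bound $\|\nabla\psi\|_{L^\infty}\le C$ of Proposition~\ref{improvedEstimates}, and the Lipschitz dependence of $\Psi$: the leading even pieces (the constant value $\psi(\oxi_1)$ times $U^{p-1}$, the leading quadratic term in $\overline{\overline{\phi^s_1}}$, etc.) land in $\overline{\overline{\phi^s_1}}$, while the genuinely non-even remainders always carry an extra factor $\lambda|y|$ or $\|\overline{\overline{\phi^s_1}}\|_*\le Ck^{-n/q}$, so against the weights $U^{p-1}$, $U^{p-2}$ they are far below the target. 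The term $(I-P)(a^*\overline{\overline{\phi_1}})$ is controlled by the smallness of $a^*$ (which, by \eqref{e0} and the counting \eqref{due}, is of lower order) together with $\|\overline{\overline{\phi_1}}\|_*\le Ck^{-n/q}$ from Proposition~\ref{existPhi1}, and $c_3$ is of the right order because $\int E^s\,\overline{Z}_3=\int(a^s\overline{\overline{\phi^s_1}})\,\overline{Z}_3=0$ by parity, so that $c_3$ pairs $\overline{Z}_3$ only against the small non-even data already estimated while $U^{p-1}Z_3$ decays faster than $1/(1+|y|^{3})$. Those few right-hand side terms still containing $\overline{\overline{\phi^*_1}}$ carry a small prefactor (a power of $\lambda$, or $k^{-n/q}$), so inserting the already available bound $|\overline{\overline{\phi^*_1}}(y)|\le C\lambda^{\frac{n-2}{2}}(1+|y|)^{-\alpha}$ of Proposition~\ref{improvedEstimates2} makes them negligible, iterating a bounded number of times if necessary. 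Once the right-hand side bound is in force, applying \cite[Lemma~3.2]{dPMPP} exactly as in the proof of Proposition~\ref{improvedEstimates2} --- with potential $a^s$, $|a^s|\le CU^{p-1}$, and $\overline{\overline{\phi^*_1}}$ orthogonal to $\ker L_0$ --- with $\nu=3$ when $n\ge4$ and $2<\nu<3$ when $n=3$, yields \eqref{estPhiNonSym}.

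The main obstacle is the bookkeeping just described: one has to verify that passing a function of the form ``large even main part $+$ small non-even remainder'' through $N(\cdot)$, through multiplication by $|\textbf{u}|^{p-1}$, and through the operator $\Psi$ reproduces such a decomposition, with the non-even remainder bounded by (a small prefactor)$\,\times\,\|\overline{\overline{\phi^*_1}}\|_*$ plus data of size at most $\frac{\lambda^{\frac{n-2}{2}}}{k}$ (respectively $\frac{\lambda^{1/2}}{k(\ln k)^3}$). This hinges on the quantitative smallness of the non-even part of $\textbf{u}$ near $\oxi_1$ (from \eqref{e0}) and of $\psi$ restricted to the same ball (from Propositions~\ref{existPsi} and~\ref{improvedEstimates}), together with the parity cancellations above; keeping every weight sharp enough for the concluding application of \cite[Lemma~3.2]{dPMPP} to close is the delicate point.
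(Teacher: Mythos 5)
Your proposal is correct in substance, but it reaches the decomposition by a genuinely different route than the paper. You define $\overline{\overline{\phi^s_1}}$ as the reflection average $P\,\oophi_1$ of the already-constructed solution and derive the equation for the odd part by applying $I-P$ to the rescaled form of \eqref{eq2}; the paper instead \emph{constructs} the even part as the solution of an auxiliary symmetrized nonlinear projected problem (with data $\ozeta_1 E^s+\gamma\mathcal{N}^s$, built from $V^s$, $\psi(\oxi_1)$ and the even parts of the corrections), solved by a second fixed-point argument through Lemma \ref{exisSim}, and then sets $\overline{\phi_1^*}:=\ophi_1-\overline{\phi_1^s}$. Your averaging device buys several simplifications: no second fixed point is needed, the a priori bound of Proposition \ref{improvedEstimates2} transfers verbatim to both pieces, the $Z_{n+1}$-projection term disappears from the equation for $\overline{\overline{\phi^*_1}}$ by parity (only $c_3U^{p-1}Z_3$ survives), and orthogonality to $\ker L_0$ is immediate. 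What the paper's construction buys is an explicit equation and explicit expressions $\tilde f_i^s$, $\tilde f_i^*$ for the symmetric and non-symmetric parts of the nonlinearity, which are then reused directly in the reduction (Claim 6); in your scheme the same information is encoded in the bookkeeping of $(I-P)$ applied to each term, and the required estimates are the same ones the paper uses: Proposition \ref{symErrInt} for $E^*$, the bound $|V-V^s|\lesssim \lambda^{\frac{n-2}{2}}|y|/k$ (with the $(\ln k)^{3}$ gain for $n=3$), the gradient bound of Proposition \ref{improvedEstimates} for $\psi(\oxi_1+\lambda y)-\psi(\oxi_1)$, the smallness $\|\oophi_1\|_*\lesssim k^{-n/q}$ to absorb the terms still containing $\overline{\overline{\phi^*_1}}$, and finally \cite[Lemma 3.2]{dPMPP} with $\nu=3$ (resp. $2<\nu<3$). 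One caveat: your assertion that the rescaled cutoff $\ozeta_1(\oxi_1+\lambda\,\cdot)$ is exactly radial is not literally true (because of the Kelvin-modified definition in \eqref{ccut} and the fact that $B(\oxi_1,\delta/k)$ crosses the unit sphere), but its non-even part is of lower order and the paper's own proof makes the same implicit simplification, so this does not affect the conclusion.
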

\begin{proof}
The idea of the proof is to identify in the equation \eqref{eq2} the largest terms, that happen to be symmetric, and will produce a large but symmetric solution. The remaining terms will give a function $\overline{\overline{\phi_1^*}}$ non symmetric but smaller.
Denote
$$V^s:=\lambda^{\frac{n-2}{2}}\left[\sum_{j\neq 1}\frac{2^{\frac{n-2}{2}}\lambda^{\frac{n-2}{2}}}{|\oxi_1-\oxi_j|^{n-2}}+\sum_{j=1}^k\frac{2^{\frac{n-2}{2}}\lambda^{\frac{n-2}{2}}}{|\oxi_1-\uxi_j|^{n-2}}-U(\oxi_1)\right].$$
Consider $E^s$ given by \eqref{errorSim} and let us define
$$\lambda^{\frac{n-2}{2}}\mathcal{N}^s(\oxi_1+\lambda y):=\ozeta_1(\oxi_1+\lambda y)\left(\tilde{f}^s_1+\tilde{f}^s_3+\tilde{f}^s_4\right)+(\ozeta_1(\oxi_1+\lambda y)-1)\tilde{f}^s_2,$$
where 
$$\tilde{f}^s_1:=p\left[(U(y)+V^s)^{p-1}-U(y)^{p-1}\right]\overline{\overline{\phi_1^s}},\quad \tilde{f}^s_2:=U(y)^{p-1}\overline{\overline{\phi_1^s}},$$
$$\tilde{f}^s_3:=p\lambda^{\frac{n-2}{2}}(U(y)+V^s)^{p-1}\psi(\oxi_1),\quad \tilde{f}^s_4:=(U(y)+V^s)^{p-2}(\hat{\phi}^s)^2,$$
being 
$$\hat{\phi}^s:=\overline{\overline{\phi_1^s}}+\sum_{j\neq 1}\lambda^{\frac{n+2}{2}}\ophi_j^s(\oxi_1+\lambda y)+\sum_{j=1}^k\lambda^{\frac{n+2}{2}}\uphi_j^s(\oxi_1+\lambda y).$$
Notice that, if $\overline{\overline{\phi_1^s}}$ is even in all its coordinates, then $\mathcal{N}^s(\oxi_1+\lambda y)$ is also even and groups the largest terms of $\mathcal{N}(\oxi_1+\lambda y)$. Thus, proceeding as in the previous Proposition we can find a solution to
$$
\Delta \overline{\phi_1^s}+p\gamma |\overline{U}_1|^{p-1}\overline{\phi_1^s}+\ozeta_1 E^s+\gamma\mathcal{N}^s(\ophi_1,\phi)=c^s_{n+1}\oU_1^{p-1}\oZ_{n+1},
$$
with
\begin{equation*}
c^s_{n+1}:=\frac{\int_{\R^n}(\ozeta_1 E^s+\gamma\mathcal{N}^s(\ophi_1,\phi))\oZ_{n+1}}{\int_{\R^n}\oU_1^{p-1}\oZ^2_{n+1}},\quad \overline{\overline{\phi_1^s}}(y)=\lambda^{\frac{n-2}{2}}\overline{\phi^s_1}(\oxi_1+\lambda y),
\end{equation*}
by applying Lemma \ref{exisSim} to perform a fixed point argument in the set of functions $\overline{\overline{\phi^s}}$ which are even in all their coordinates and have size
$$\|\overline{\overline{\phi^s}}\|_*\leq Ck^{-\frac{n}{q}}\mbox{ if }n\geq 4,\quad \|\overline{\overline{\phi^s}}\|_*\leq \frac{C}{k\ln k}\mbox{ if }n= 4.$$
Furthermore, proceeding as in Proposition \ref{improvedEstimates2} we can conclude that
$$|\overline{\overline{\phi_1^s}}(y)|\leq C\frac{\lambda^{\frac{n-2}{2}}}{(1+|y|)^\alpha}\quad\mbox{ where }\begin{cases}
\alpha=2\;\;\mbox{ if }n\geq 5,\\
\alpha=1\;\;\mbox{ if }n=4,\\
0<\alpha<1\;\;\mbox{ if }n=3.
\end{cases}$$
Let us define $\overline{\phi_1^*}:=\ophi_1-\overline{\phi_1^s}$. Hence it solves
\begin{equation}\label{eqNonSim}
\Delta \overline{\phi_1^*}+p\gamma |\overline{U}_1|^{p-1}\overline{\phi_1^*}+\ozeta_1 E^*+\gamma\mathcal{N}^*(\ophi_1,\phi)=c_3\oU_1^{p-1}\oZ_3+c^*_{n+1}\oU_1^{p-1}\oZ_{n+1},
\end{equation}
where $E^*$ is defined in Proposition \ref{symErrInt},
$$ \mathcal{N}^*(\ophi_1,\phi):=\mathcal{N}(\ophi_1,\phi)-\mathcal{N}^s(\ophi_1,\phi),$$
and
\begin{equation*}
c_3:=\frac{\int_{\R^n}(\ozeta_1 E^*+\gamma\mathcal{N}^*(\ophi_1,\phi))\oZ_3}{\int_{\R^n}\oU_1^{p-1}\oZ^2_3},\quad c^*_{n+1}:=\frac{\int_{\R^n}(\ozeta_1 E^*+\gamma\mathcal{N}^*(\ophi_1,\phi))\oZ_{n+1}}{\int_{\R^n}\oU_1^{p-1}\oZ^2_{n+1}}=c_{n+1}-c^s_{n+1}.
\end{equation*}
The key point is that, without the previous symmetric part, the terms left are smaller and more precise estimates con be done, Indeed, denote
$$V(y):=\lambda^{\frac{n-2}{2}}\left[\sum_{j\neq 1}\oU_j(\oxi_1+\lambda y)+\sum_{j=1}^k\uU_j(\oxi_1+\lambda y)-U(\oxi_1+\lambda y)\right].$$
Thus we can write
$$\lambda^{\frac{n-2}{2}}\mathcal{N}^*(\oxi_1+\lambda y):=\ozeta_1(\oxi_1+\lambda y)\left(\tilde{f}^*_1+\tilde{f}^*_3+\tilde{f}^*_4\right)+(\ozeta_1(\oxi_1+\lambda y)-1)\tilde{f}^*_2,$$
where 
\begin{equation*}\begin{split}
\tilde{f}_1^*(y):=&\,p\left[(U(y)+V^s(y))^{p-1}-U^{p-1}(y)\right]\overline{\overline{\phi_1^*}}(y)\\
&+p\left[(U(y)+V(y))^{p-1}-(U(y)+V^s(y))^{p-1}\right]\oophi_1(y),
\end{split}\end{equation*}
$$\tilde{f}_2^*(y):=U(y)^{p-1}\overline{\overline{\phi_1^*}}(y),$$
$$\begin{aligned}
\tilde{f}_3^*(y):=&\,p\lambda^{\frac{n-2}{2}}\left(U(y)+V(y)\right)^{p-1}\lambda \nabla\psi(\eta)y\\
&+p\lambda^{\frac{n-2}{2}}\left[\left(U(y)+V(y)\right)^{p-1}-\left(U(y)+V^s(y)\right)^{p-1}\right]\psi(\oxi_1),
\end{aligned}$$
$$\tilde{f}^*_4(y):=\tilde{f}_4(y)-\tilde{f}^s_4(y),$$
with $\tilde{f}_4$ defined in Proposition \ref{existPhi1}. Noticing that
$$\bigg|\left[(U(y)+V(y))^{p-1}-(U(y)+V^s(y))^{p-1}\right]\oophi_1(y)\bigg|\leq CU^{p-1}(y)|V(y)-V^s|\|\oophi_1\|_*$$
and
$$|V(y)-V^s|\leq C \frac{\lambda^{\frac{n-2}{2}}|y|}{k}\mbox{ if }n\geq 4,\qquad   |V(y)-V^s|\leq C\frac{\lambda^{1/2}|y|}{k(\ln k)^3}\mbox{ if }n=3,$$
it can be proved that 
\begin{equation*}
\bigg|\left[(U(y)+V(y))^{p-1}-(U(y)+V^s(y))^{p-1}\right]\oophi_1(y)\bigg|\leq\begin{cases}
C \frac{\lambda^{\frac{n-2}{2}}}{k}\frac{1}{1+|y|^3}\mbox{ if }n\geq 4,\\
C\frac{\lambda^{1/2}}{k(\ln k)^3}\frac{1}{1+|y|^3}\mbox{ if }n=3,
\end{cases}
\end{equation*}
and the same bound can be proved for $\tilde{f}_3^*$ and $\tilde{f}_4^*$ by using the estimates in Proposition \ref{existPsi} and Proposition \ref{improvedEstimates}. Using this together with Proposition \ref{symErrInt} we can proceed as in the proof of Proposition \ref{improvedEstimates2}  to estimate the size of $|\overline{\overline{\phi_1^*}}|$. That is, we can rewrite problem \eqref{eqNonSim} as
$$L_0(\overline{\overline{\phi_1^*}})+a^*(y)\overline{\overline{\phi_1^*}}=g^*(y)+c_3U^{p-1}Z_3+c_{n+1}^*U^{p-1}Z_{n+1},$$
where
$$L_0(\phi):=\Delta \phi+p\gamma|U|^{p-1}\phi,\qquad |a^*(y)|\leq CU(y)^{p-1},$$
and
$$|g^*(y)|\leq \begin{cases}
C \frac{\lambda^{\frac{n-2}{2}}}{k}\frac{1}{1+|y|^3}\mbox{ if }n\geq 4,\\
C\frac{\lambda^{1/2}}{k(\ln k)^3}\frac{1}{1+|y|^3}\mbox{ if }n=3.
\end{cases}.$$
Applying again \cite[Lemma 3.2]{dPMPP} with $\nu=3$ for $n\geq 4$ and $2<\nu<3$ for $n=3$ we obtain \eqref{estPhiNonSym}.
\end{proof}
Likewise, we will need accurate estimates on the non symmetric part of $\psi$.
\begin{prop}\label{symEstPsi}
Let $\psi$ be the solution of \eqref{system3} provided by Proposition \ref{existPsi}. Then
$$\psi(\oxi_1+\lambda y)=\psi^s(y)+\psi^*(y), \qquad y\in B(0,\frac{\delta}{\lambda k}),$$
where $\psi^s$ is even with respect to $y_3$ and
$$|\psi^*(y)|\leq C \left(\|\oophi_1\|_*+\|\psi\|_*+\tau o_k(1)\right)\lambda |y|(1+|y|),$$
where $o_k(1)$ is a function that goes to 0 when $k\to \infty$.
\end{prop}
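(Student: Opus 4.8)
The plan is to localize \eqref{system3} around $\oxi_1$, rescale, and isolate the part of the resulting equation that is odd in the third variable. In the ball $B(\oxi_1,\frac{\delta}{k})$ every cut-off $\ozeta_j$ with $j\neq1$ and every $\uzeta_j$ vanishes, while $\ozeta_1\equiv1$, so there \eqref{system3} reduces to
\[
\Delta\psi+2p\gamma U^{p-1}\psi+p\gamma|\textbf{u}|^{p-1}\Big(\sum_{j\neq1}\ophi_j+\sum_{j=1}^{k}\uphi_j\Big)=0 .
\]
Setting $P(y):=\psi(\oxi_1+\la y)$ for $|y|<\frac{\delta}{\la k}$ and multiplying by $\la^{2}$, this becomes $\Delta P+a P+c\,Q=0$, where $a(y):=2p\gamma\la^{2}U(\oxi_1+\la y)^{p-1}$ satisfies $|a|\le C\la^{2}$, $c(y):=p\gamma\la^{2}|\textbf{u}(\oxi_1+\la y)|^{p-1}=p\gamma U(y)^{p-1}(1+o_k(1))$, and $Q(y):=\big(\sum_{j\neq1}\ophi_j+\sum_{j}\uphi_j\big)(\oxi_1+\la y)$.

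Let $\sigma$ be the reflection $y_3\mapsto -y_3$, which in the original variables is reflection across the hyperplane $\{y_3=R\tau\}$ through $\oxi_1$, and split $P=\psi^{s}+\psi^{*}$ into its even and odd parts under $\sigma$. Then $\psi(\oxi_1+\la y)=\psi^{s}(y)+\psi^{*}(y)$, $\psi^{s}$ is even in $y_3$, and $\psi^{*}$ is odd in $y_3$, hence vanishes on $\{y_3=0\}$. Writing $a=a^{s}+a^{*}$, $c=c^{s}+c^{*}$, $Q=Q^{s}+Q^{*}$, the odd part of the equation is
\[
\Delta\psi^{*}+a^{s}\psi^{*}=g^{*}:=-a^{*}\psi^{s}-c^{s}Q^{*}-c^{*}Q^{s}\qquad\text{in }B\big(0,\tfrac{\delta}{\la k}\big).
\]
A priori $\psi^{*}$ is only bounded by $C\|\psi\|_{*}$ on this ball, so the whole point is to show that $g^{*}$ is small, with a gain of a factor $\la|y|$, after which the pointwise bound on $\psi^{*}$ follows from the linear theory of \cite[Lemma 3.2]{dPMPP}, applied as in Propositions \ref{improvedEstimates2} and \ref{estSimPart} (with decay parameter $\nu=3$ for $n\geq4$ and $2<\nu<3$ for $n=3$), splitting $g^{*}$ into a decaying part and a term of size $\tau o_k(1)$ absorbed through the crude bound on $\psi^{*}$, whose boundary values on $\partial B(0,\frac{\delta}{\la k})$ are in any case $\le C\|\psi\|_{*}\le C\|\psi\|_{*}\la|y|(1+|y|)$ there.

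The estimate of $g^{*}$ is the heart of the matter, and rests on identifying the antisymmetry of each ingredient across $\{y_3=R\tau\}$. The central bubble $U$ is smooth near $\oxi_1$ and $\partial_{y_3}(U^{p-1})(\oxi_1)=O(\tau)$ (it is proportional to the third coordinate $R\tau$ of $\oxi_1$, since $U^{p-1}=c_n(1+|y|^{2})^{-2}$), so $|a^{*}(y)|\le C\tau\la^{3}|y|(1+|y|)$ and $|a^{*}\psi^{s}|\le C\,\tau\,o_k(1)\,\la|y|(1+|y|)$. In $c$, the profile $\oU_1$ is radial about $\oxi_1$ and each $\oU_j$ ($j\neq1$) is radial about a point of $\{y_3=R\tau\}$, hence both are symmetric across that hyperplane and drop out of $c^{*}$; what survives comes from the central $U$ (again with a $\tau$) and from $\sum_{j}\uU_j$, which near $\oxi_1$ is $O(\la^{n-2}k\,\tau^{3-n})$ (resp. $O(\la k\ln\frac{\pi}{\tau})$ if $n=3$) by \eqref{due}, so that $|c^{*}Q^{s}|\le C(\tau+\|\oophi_1\|_{*})\,o_k(1)\,\la|y|(1+|y|)$. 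For $Q^{*}$, the upper tails $\ophi_j$ ($j\neq1$) are centered on $\{y_3=R\tau\}$, so their antisymmetric part across it is a rotated copy of $\overline{\overline{\phi^*_1}}$ estimated in Proposition \ref{estSimPart}; summing over $j$ and applying the mean value theorem over the $2\la y_3$-shift — which produces the factor $\la|y|$ — together with the pointwise and gradient bounds for $\oophi_1$ from Propositions \ref{improvedEstimates2} and \ref{estSimPart}, yields a contribution $\le C\|\oophi_1\|_{*}\,\la|y|(1+|y|)$, while the lower tails $\uphi_j$, centered at height $-R\tau$ and hence evaluated at distance $\gtrsim\tau$ from their centers, contribute (with the sum dominated by $j=1$ at distance $2R\tau$) at most $C\|\oophi_1\|_{*}\,o_k(1)\,\la|y|(1+|y|)$. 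Collecting, $|g^{*}(y)|\le C(\|\oophi_1\|_{*}+\|\psi\|_{*}+\tau o_k(1))\,\la|y|(1+|y|)$, whence the stated bound on $\psi^{*}$.

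The main difficulty is exactly this bookkeeping: extracting the $\tau$-gain from the antisymmetry of the smooth potential $U^{p-1}$ and of the lower bubbles relative to the hyperplane through $\oxi_1$; invoking Proposition \ref{estSimPart} to see that the upper tails are almost symmetric across it; and controlling the sums $\sum_{j\neq1}$ and $\sum_{j}$ of tails near $\oxi_1$ uniformly, so that no logarithmic loss spoils the factor $\la$ in the final coefficient — all this while matching the interior estimate with the outer control of $\psi$ near $|y-\oxi_1|\sim\frac1k$.
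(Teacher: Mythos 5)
Your reduction to the ball $B(\oxi_1,\delta/k)$ is correct as far as it goes: there the equation \eqref{system3} does collapse to $\Delta\psi+2p\gamma U^{p-1}\psi+p\gamma|\textbf{u}|^{p-1}\big(\sum_{j\neq1}\ophi_j+\sum_{j=1}^k\uphi_j\big)=0$, and splitting the rescaled function into even and odd parts in $y_3$ is a legitimate choice of decomposition. The gap is in the last step. Knowing that $\psi^*$ solves $\Delta\psi^*+a^s\psi^*=g^*$ only in $B(0,\frac{\delta}{\la k})$, with $g^*$ small and with boundary values merely bounded by $C\|\psi\|_*$, does not yield $|\psi^*(y)|\le C(\cdots)\la|y|(1+|y|)$ in the interior. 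The majorant $\la|y|(1+|y|)$ is subharmonic, so the maximum principle cannot propagate the boundary bound inward; concretely, the homogeneous part with odd boundary data of size $\|\psi\|_*$ on a ball of radius $\rho=\frac{\delta}{\la k}$ can be as large as $\|\psi\|_*\,|y_3|/\rho\sim\|\psi\|_*\,\la k\,|y_3|$ near the centre (take $h(y)=\|\psi\|_*\,y_3/\rho$, harmonic, odd, admissible boundary data), which exceeds the claimed bound by a factor of order $k$ in the region $1\lesssim|y|\ll k$. Moreover \cite[Lemma 3.2]{dPMPP}, which you invoke, is a decay estimate for the linearization around the bubble posed on all of $\R^n$ and is used in Propositions \ref{improvedEstimates2} and \ref{estSimPart} for decaying functions; it does not apply to a Dirichlet problem on an expanding ball whose target bound grows like $|y|^2$.

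The missing ingredient is global. The smallness of the non-symmetric part of $\psi$ near $\oxi_1$ is not a consequence of the local equation plus a crude boundary bound: it encodes the fact that the whole right-hand side of \eqref{system3} --- in particular the exterior error $(1-\sum_j(\ozeta_j+\uzeta_j))E$, which drops out of your localized equation entirely --- is almost symmetric with respect to the hyperplane $x_3=\tau$. The paper's proof makes this quantitative by writing $\psi(\oxi_1+\la y)$ as a Newtonian potential of $W(\psi)$ (all the terms of \eqref{system3}) over $\R^n$, expanding the kernel into a part $A(x,y)$ even in $y_3$ plus $B_1+B_2$ with $|B_1|\sim\la|y_3|\,|x-\oxi_1|^{1-n}$, and estimating the global integrals of $W(\psi)$ against $B_1$ and $B_2$; the term $\tau o_k(1)$ in the statement arises precisely from the near-cancellation of the leading error terms against the odd kernel $B_1$. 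Your symmetry bookkeeping for $a^*$, $c^*$ and $Q^*$ is in the right spirit (it mirrors those kernel estimates), but without a global representation --- or some substitute that controls the trace of $\psi^*$ on $\partial B(0,\frac{\delta}{\la k})$ far better than $C\|\psi\|_*$ --- the stated interior estimate cannot be deduced.
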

\begin{proof}
Since $\psi$ is a solution of \eqref{system3} we can write, making the convolution with the fundamental solution of the Laplace equation,
\begin{equation*}\begin{split}
\psi(\oxi_1+\lambda y)=c_n\int_{\R^n}&\frac{1}{|\oxi_1+\lambda y-x|^{n-2}} W(\psi)(x) \,dx, \quad y\in B(0,\frac{\delta}{\lambda k}), 
\end{split}\end{equation*}
$$W(\psi):=V\psi+M(\psi)+p\gamma|{\bf{u}}|^{p-1}\sum_{j=1}^k((1-\ozeta_j)\ophi_j+(1-\uzeta_j)\uphi_j)-p\gamma U^{p-1}\psi,$$
where $c_n$ is a constant depending only on the dimension and $V$, $M$ were defined in \eqref{V1V2} and \eqref{M} respectively. Furthermore,
\begin{equation*}\begin{split}
\frac{1}{|\oxi_1+\lambda y-x|^{n-2}}=:A(x,y)+B_1(x,y)+B_2(x,y),
\end{split}\end{equation*}
where
$$A(x,y):=\frac{1}{|x-\oxi_1|^{n-2}}\left[1-\left(\frac{n-2}{2}\right)\frac{\lambda^2 y^2+2\lambda\sum_{i=1, i\neq 3}^n y_i(x-\oxi_1)_i}{|x-\oxi_1|^2}\right],$$
$$B_1(x,y):=-(n-2)\lambda y_3\frac{x_3-\tau}{|x-\oxi_1|^n},\qquad B_2(x,y):=O\left(\left(\frac{(\lambda^2 y^2+2\lambda (y,x-\oxi_1))^2}{|x-\oxi_1|^{n+2}}\right)\right).$$
Notice that $A(x,y)$ is even with respect to $y_3$ and thus we can define
$$\psi^s(y):=c_n\int_{\R^n}A(x,y) W(\psi)(x)\,dx,$$
that inherits this symmetry. Therefore we have to estimate
$$\psi^*(y):=c_n\int_{\R^n}(B_1(x,y)+B_2(x,y)) W(\psi)(x)\,dx.$$
Since $|W(\psi)(x)|\leq \frac{C}{(1+|x|)^4}$
and $|y|<c\lambda^{-1/2}$ it easily follows that
\begin{equation}\label{estB2}
\bigg|\int_{\R^n}B_2(x,y) W(\psi)(x)\,dx\bigg| \leq  \lambda \tau |y|^2 o_k(1).
\end{equation}
To estimate the term with $B_1(x,y)$ notice first that
\begin{equation}\label{psiEst1}
\bigg|\int_{\R^n}B_1(x,y)U^{p-1}\psi\,dx\bigg|\leq C\lambda |y|\int_{\R^n}\frac{1}{|x-\oxi_1|^{n-1}}\frac{1}{(1+|x|)^4}|\psi(x)|\,dx \leq C\lambda |y|\|\psi\|_*
\end{equation}
and likewise 
\begin{equation}\label{psiEst2}
\bigg|\int_{\R^n}B_1(x,y)V \psi \,dx \bigg|\leq C\lambda |y|\|\psi\|_*.
\end{equation}
Observing that
$$|\ophi_j(y)|\leq C\|\oophi_1\|_*\frac{\lambda^{\frac{n-2}{2}}}{|y-\oxi_j|^{n-2}},\qquad |\uphi_j(y)|\leq C\|\oophi_1\|_*\frac{\lambda^{\frac{n-2}{2}}}{|y-\uxi_j|^{n-2}},$$
we get
\begin{equation}\label{psiEst3}
\bigg|\int_{\R^n}B_1(x,y)|{\bf{u}}|^{p-1}\sum_{j=1}^k((1-\ozeta_j)\ophi_j+(1-\uzeta_j)\uphi_j)\bigg|\leq C \lambda |y| \|\oophi_1\|_*,
\end{equation}
\begin{equation}\label{psiEst4}
\bigg|\int_{\R^n}B_1(x,y)\left(1-\sum_{j=1}^k(\ozeta_j+\uzeta_j)\right)N(\phi)\bigg|\leq C \lambda |y| (\|\oophi_1\|_*+\|\psi\|_*),
\end{equation}
and thus the only term left is the one concerning the error, namely
$$\int_{\R^n}B_1(x,y)\left(1-\sum_{j=1}^k(\ozeta_j+\uzeta_j)\right)E\,dx=\underbrace{\int_{\R^n}B_1(x,y)\left(1-\sum_{j=1}^k\ozeta_j\right)E\,dx}_{B_{11}}-\underbrace{\sum_{j=1}^k\int_{\R^n}B_1(x,y)\uzeta_j E\,dx}_{B_{12}}.$$
Let $R>0$ large. Thus it can be seen that
$$B_{11}=\int_{B(\oxi_1,R)}B_1(x,y)(1-\sum_{j=1}^k\ozeta_j)E\,dx+O(|y| k^{-(n-1)}).$$
The desired estimate will follow by noticing that the largest terms of the error are orthogonal to $B_1(x,y)$. To see this, we write the error as
\begin{equation*}\begin{split}
\gamma^{-1}E\simeq &\,pU^{p-1}\left(\sum_{j=1}^k\oU_j+\sum_{j=1}^k\uU_j\right)-\sum_{j=1}^k\oU_j^p-\sum_{j=1}^k\uU_j^p\\
=&\,pU^{p-1}(\oxi_1)\sum_{j=1}^k\oU_j-\sum_{j=1}^k\oU_j^p+pU^{p-1}(\oxi_1)\sum_{j=1}^k\uU_j-\sum_{j=1}^k\uU_j^p\\
&\,+p(U^{p-1}-U^{p-1}(\oxi_1))\sum_{j=1}^k\oU_j+p(U^{p-1}-U^{p-1}(\oxi_1))\sum_{j=1}^k\uU_j.
\end{split}\end{equation*}
Notice first that, since $B_1(x,y)$ is odd with respect to the hyperplane $x_3=\tau$, there holds
$$\int_{B(\oxi_1,R)}B_1(x,y)(1-\sum_{j=1}^k\ozeta_j)\left(pU^{p-1}(\oxi_1)\sum_{j=1}^k\oU_j-\sum_{j=1}^k\oU_j^p\right)\,dx=0.$$
On the other hand, writting
\begin{equation*}\begin{split}\label{uUj}
\uU_j(x)=&\,\frac{\lambda^\frac{n-2}{2}}{(\lambda^2+|\oxi_1-\uxi_j|^2+|x-\oxi_1|^2)^{\frac{n-2}{2}}}\left[1-(n-2)\frac{(x-\oxi_1,\oxi_1-\uxi_j)}{\lambda^2+|\oxi_1-\uxi_j|^2+|x-\oxi_1|^2}\right.\\
&\,\left.+O\left(\left(\frac{(x-\oxi_1,\oxi_1-\uxi_j)}{\lambda^2+|\oxi_1-\uxi_j|^2+|x-\oxi_1|^2}\right)^2\right)\right],
\end{split}\end{equation*}
and applying again the evenness of its main terms with respect to $x_3=\tau$ we have that
\begin{equation*}\begin{split}
\bigg|\int_{B(\oxi_1,R)}&B_1(x,y)\left(1-\sum_{j=1}^k\ozeta_j\right)pU^{p-1}(\oxi_1)\sum_{j=1}^k\uU_j\,dx\bigg|\\
\leq &\,C\lambda^{1+\frac{n-2}{2}}|y|\int_{B(\oxi_1,R)}\frac{|x_3-\tau|}{|x-\oxi_1|^n}\sum_{j=1}^k\frac{\tau|x_3-\tau|}{(\lambda^2+|\oxi_1-\uxi_j|^2+|x-\oxi_1|^2)^{\frac{n}{2}}}\,dx\\
&\,+C\lambda^{1+\frac{n-2}{2}}|y|\int_{B(\oxi_1,R)}\frac{|x_3-\tau|}{|x-\oxi_1|^n}\sum_{j=1}^k\frac{|x-\oxi_1|^2|\oxi_1-\uxi_j|^2}{(\lambda^2+|\oxi_1-\uxi_j|^2+|x-\oxi_1|^2)^{\frac{n+2}{2}}}\,dx\\
\leq&\, C\lambda^{1+\frac{n-2}{2}}|y|\int_{B(\oxi_1,R)}\frac{1}{|x-\oxi_1|^{n-1}}\sum_{j=1}^k\left(\frac{\tau}{|\oxi_1-\uxi_j|^{n-1}}+\frac{1}{|\oxi_1-\uxi_j|^{n-2}}\right)\,dx\\
=&\, \lambda\tau |y| o_k(1),
\end{split}\end{equation*}
where in the last inequality we have used \eqref{due}. Proceeding analogously with the other terms it can be concluded that 
$$\int_{B(\oxi_1,R)}B_1(x,y)\left(1-\sum_{j=1}^k\ozeta_j\right)E\,dx=\lambda\tau o_k(1)|y|,$$
and therefore
\begin{equation}\label{B11}
B_{11}=\lambda\tau o_k(1)|y|.
\end{equation}
Likewise,
\begin{equation}\begin{split}\label{B12}
|B_{12}|\leq C\lambda |y|\sum_{j=1}^k\int_{B(\uxi_j,\frac{\delta}{k})}\frac{|x_3-\tau|}{|\oxi_1-\uxi_j|^n}\sum_{i=1}^k\frac{\lambda^{\frac{n-2}{2}}}{|\oxi_1-\uxi_j|^{n-2}}\,dx=\lambda\tau o_k(1)|y|.
\end{split}\end{equation}
Putting together \eqref{estB2}-\eqref{psiEst4} with \eqref{B11} and \eqref{B12} the result follows.
\end{proof}

\section{Proof of Theorem \ref{theo}}\label{thm}
\noindent The goal of this section is to find positive parameters $\ell$ and $t$ that enter in the definition of $\lambda$ and $\tau$ in \eqref{par1} and are independent of $k$, in such a way that  $c_3$ and $c_{n+1}$ (defined in \eqref{c3cn1}) vanish. In fact, if such a choice is possible, then the solution $\ophi_1$ found in Proposition \ref{existPhi1} solves \eqref{eq1} and thus, applying Proposition \ref{existPsi} and \eqref{cond1}-\eqref{cond3}, we can conclude that $\phi$ solves \eqref{probPhi} and therefore
$$u=\textbf{u}+\phi$$
is a solution of \eqref{prob}.

Thus, we want to prove the existence of  $\ell$ and $t$ so that
$$\oc_3(\ell, t):=\int_{\R^n}(\ozeta_1 E+\gamma\mathcal{N}(\ophi_1,\phi))\oZ_3=0,\;\; \oc_{n+1}(\ell, t):=\int_{\R^n}(\ozeta_1 E+\gamma\mathcal{N}(\ophi_1,\phi))\oZ_{n+1}=0.$$
It is worth pointing out that, due to some symmetry, the main order term of $\oc_3$ vanishes, what makes necessary an expansion of $\oc_3$ at lower order. This is usually a delicate issue and it requires sharper estimates on the non linear term, that is, a finer control on the size of the terms $\psi$ and $\ophi_1$ in the spirit of Proposition \ref{improvedEstimates2}. However in this case this type of estimates are not enough, since they do not produce a non linear term sufficiently small. We need to identify a precise decomposition of  $\ophi_1$ and $\psi$ in one symmetric part whose contribution to the computation of $\oc_3$ is zero, and a smaller non-symmetric part (see Claim 6). This decompositions were developed in Proposition \ref{estSimPart} and Proposition \ref{symEstPsi}.

What we obtain at the end is that, for $n\geq 4$,
\begin{equation}\begin{split}\label{systLT}
&\oc_{3}(\ell, t)=D_n\frac{t\ell^{\frac{n}{n-2}}}{k^{n+1-\frac{2}{n-1}}}\left[d_n\frac{\ell}{t^{n-1}}-1\right]+\frac{1}{k^{\alpha}}\Theta_{k}(\ell,t),\\
&\oc_{n+1}(\ell, t)=E_n\frac{\ell}{k^{n-2}}\left[e_n\ell^2 -1\right]+\frac{1}{k^{\beta}}\Theta_{k}(\ell,t),
\end{split}\end{equation}
where
$$\alpha>n+1-\frac{2}{n-1},\qquad \beta>n-2,$$
and
\begin{equation}\begin{split}\label{systLTn3}
&\oc_3(\ell,t)=F\frac{\ell^3 t}{\sqrt{\ln k}(k\ln k)^3}\left[f\frac{\ell}{t^2}-1\right]+\frac{1}{k^3(\ln k)^4}\Theta_k(\ell,t),\\
&\oc_4(\ell,t)=G\frac{\ell}{k\ln k}\left[g\ell^2-1\right]+\frac{1}{k\ln k}\frac{\ln\left(\frac{2\pi}{\sqrt{\ln k}}\right)}{\ln k}\Theta_k(\ell,t),
\end{split}\end{equation}
 for $n=3$, where $D_n, d_n, E_n, e_n, F, f, G, g$ are fixed positive numbers (depending only on $n$) and $\Theta_{k}(\ell,t)$ is a generic function, smooth on its variables, and uniformly bounded as $k\rightarrow \infty$. Hence, by a fixed point argument we can conclude the existence of $\ell$ and $t$ such that
\begin{equation}\label{cero}
\oc_3(\ell, t)=\oc_{n+1}(\ell, t)=0.
\end{equation}
By simplicity we detail the argument in the case of \eqref{systLTn3}. With abuse of notation on the function $\Theta$, that always stands for a generic function smooth on its variables and uniformly bounded as $k\rightarrow \infty$, \eqref{cero} is equivalent to
\begin{equation*}\begin{split}
&f\ell-t^2+o_k(1)\Theta_k(\ell,t)=0,\\
&g\ell^2-1+o_k(1)\Theta_k(\ell,t)=0.
\end{split}\end{equation*}
Defining $\rho:=t^2$ and $\eta:=\ell^2$ we can rewrite the system as
\begin{equation*}\begin{split}
&\rho=f\eta^{1/2}+o_k(1)\Theta_k(\eta,\rho),\\
&\eta=\frac{1}{g}+o_k(1)\Theta_k(\eta,\rho).
\end{split}\end{equation*}
Suppose $0<\rho\leq C$ fixed. Hence the second equation can be expressed as
$$\eta=F_\rho(\eta),\quad \mbox{ with }F_a(s):=\frac{1}{g}+o_k(1)\Theta_k(s,a).$$ 
Consider the set $X:=\{\eta\in \R:\,0<\eta\leq \frac{2}{g}\}$. Using the smoothness of $\Theta_k$ it is easy to see that $F_\rho$ maps $X$ into itself and that it is a contraction for $k$ large enough. Thus, for any fixed $\rho$ there exists a fixed point $\eta_\rho\in X$ such that $\eta_\rho=F_\rho(\eta_\rho)$.

Replacing on the first equation this translates into
$$\rho=f (F_\rho(\eta_\rho))^{1/2}+o_k(1)\Theta_k(\eta_\rho,\rho)=\frac{f}{g^{1/2}}+o_k(1)\Theta_k(\eta_\rho,\rho)=:G(\rho).$$
Considering the set $Y:=\{\rho\in\R:0<\rho\leq \frac{2f}{g^{1/2}}\}$ and using the smoothness of $\Theta$ it can be checked that $G$ is a contraction that maps the set $Y$ into itself, and therefore we conclude the existence of a fixed point $\rho = G(\rho)$, what concludes the argument.

The rest of the section is devoted to prove \eqref{systLT} and \eqref{systLTn3}.
Let us consider first the case of $\oc_{n+1}$, that follows analogously to \cite{dPMPP}. We write
$$\oc_{n+1}(\ell, t)=\int_{\R^n}E\oZ_{n+1}-\int_{\R^{n}}(1-\ozeta_1)E\oZ_{n+1}+\gamma\int_{\R^n}\mathcal{N}(\ophi_1,\phi)\oZ_{n+1}=0.$$
Thus, for $\ell$ and $t$ as in \eqref{par1} we have:

\noindent {\bf Claim 1:}
$$ \int_{\R^n}E\oZ_{n+1}=
\begin{cases}E_n\frac{\ell}{k^{n-2}}\left[e_n\ell^2 -1\right]+\frac{1}{k^{n-2+2\frac{n-3}{n-1}}}\Theta_k(\ell,t) \;\;\mbox{ if }n\geq 4,\\
G\frac{\ell}{k\ln k}\left[g\ell^2-1\right]+\frac{1}{k\ln k}\frac{\ln\left(\frac{2\pi}{\sqrt{\ln k}}\right)}{\ln k}\Theta_k(\ell,t)\;\;\mbox{ if }n=3.
\end{cases}$$

\noindent {\bf Claim 2:}
$$\int_{\R^{n}}(1-\ozeta_1)E\oZ_{n+1}=
\begin{cases}\frac{1}{k^{n-1}}\Theta_k(\ell,t) \;\;\mbox{ if }n\geq 4,\\
\frac{1}{(k\ln k)^2}\Theta_k(\ell,t)\;\;\mbox{ if }n=3.
\end{cases}$$

\noindent {\bf Claim 3:}
$$\int_{\R^n}\mathcal{N}(\ophi_1,\phi)\oZ_{n+1}=
\begin{cases}
\frac{1}{k^{n+\frac{n}{q}-3}}\Theta_k(\ell,t) \;\;\mbox{ if }n\geq 4,\\
\frac{1}{(k\ln k)^2}\Theta_k(\ell,t)\;\;\mbox{ if }n=3.
\end{cases}$$
\noindent Notice that these claims together give the second equation in \eqref{systLT} and \eqref{systLTn3}.

\noindent {\bf Proof of Claim 1.} We decompose
\begin{equation}\label{three}
\int_{\mathbb{R}^n}E\oZ_{n+1}=\int_{B(\oxi_1,\frac{\delta}{k})}E\oZ_{n+1}+\int_{Ext}E\oZ_{n+1}+\sum_{j\neq 1}\int_{B(\oxi_j,\frac{\delta}{k})}E\oZ_{n+1}+\sum_{j= 1}^k\int_{B(\uxi_j,\frac{\delta}{k})}E\oZ_{n+1},
\end{equation}
where $\delta$ is a positive constant independent of $k$ and
$$Ext:=\{\cap_{j=1}^k\{|y-\oxi_j|>\frac{\delta}{k}\}\}\cap\{\cap_{j=1}^k\{|y-\uxi_j|>\frac{\delta}{k}\}\}.$$
Denoting
$$V(y):=\lambda^{\frac{n-2}{2}}\left[\sum_{j\neq 1}\oU_j(\lambda y+\oxi_1)-\sum_{j=1}^k\uU_j(\lambda y+\oxi_1)-U(\lambda y+\oxi_1)\right]$$
we have, for some $s\in(0,1)$,
\begin{equation}\label{mainError}
\begin{split}
&\gamma^{-1}\int_{B(\oxi_1,\frac{\delta}{k})}E\oZ_{n+1}=\lambda^\frac{n+2}{2}\int_{B(0,\frac{\delta}{\lambda k})}E(\lambda y+\oxi_1)Z_{n+1}(y)\\
&\qquad=p\sum_{j\neq 1}\lambda^{\frac{n-2}{2}}\int_{B(0,\frac{\delta}{\lambda k})}U^{p-1}\oU_j(\lambda y+\oxi_1)Z_{n+1}+p\sum_{j= 1}^k\lambda^{\frac{n-2}{2}}\int_{B(0,\frac{\delta}{\lambda k})}U^{p-1}\uU_j(\lambda y+\oxi_1)Z_{n+1}\\
&\qquad\;\;-p\lambda^{\frac{n-2}{2}}\int_{B(0,\frac{\delta}{\lambda k})}U^{p-1}U(\lambda y+\oxi_1)Z_{n+1}+p\int_{B(0,\frac{\delta}{\lambda k})}\left[(U+sV)^{p-1}-U^{p-1}\right]VZ_{n+1}\\
&\qquad\;\;+\sum_{j\neq 1}\lambda^{\frac{n+2}{2}}\int_{B(0,\frac{\delta}{\lambda k})}\oU_j^p(\lambda y+\oxi_1)Z_{n+1}+\sum_{j= 1}^k\lambda^{\frac{n+2}{2}}\int_{B(0,\frac{\delta}{\lambda k})}\uU_j^p(\lambda y+\oxi_1)Z_{n+1}\\
&\qquad\;\;-\lambda^{\frac{n+2}{2}}\int_{B(0,\frac{\delta}{\lambda k})}U^p(\lambda y+\oxi_1)Z_{n+1}.
\end{split} \end{equation}
Thus, defining
$I_1:=\int_{\R^n}U^{p-1}Z_{n+1},$
from \eqref{e0} follows that
$$
\lambda^{\frac{n-2}{2}}\int_{B(0,\frac{\delta}{\lambda k})}U^{p-1}\oU_j(\lambda y+\oxi_1)Z_{n+1}=\frac{2^{\frac{n-2}{2}}\lambda^{n-2}I_1}{|\oxi_1-\oxi_j|^{n-2}}\left(1+\frac{\lambda^2}{|\oxi_1-\oxi_j|^2}\Theta_k(\ell,t)\right),
$$
$$
\lambda^{\frac{n-2}{2}}\int_{B(0,\frac{\delta}{\lambda k})}U^{p-1}\uU_j(\lambda y+\oxi_1)Z_{n+1}=\frac{2^{\frac{n-2}{2}}\lambda^{n-2}I_1}{|\oxi_1-\uxi_j|^{n-2}}\left(1+\frac{\lambda^2}{|\oxi_1-\uxi_j|^2}\Theta_k(\ell,t)\right),
$$
and
$$
\lambda^{\frac{n-2}{2}}\int_{B(0,\frac{\delta}{\lambda k})}U^{p-1}U(\lambda y+\oxi_1)Z_{n+1}=\lambda^{\frac{n-2}{2}}U(\oxi_1)I_1\left(1+\frac{\lambda^2}{1+|\oxi_1|^2}\Theta_k(\ell,t)\right),
$$
which are the main order terms in \eqref{mainError}. Indeed,
$$\begin{aligned}
\bigg|\sum_{j\neq 1}\lambda^{\frac{n+2}{2}}&\int_{B(0,\frac{\delta}{\lambda k})}\oU_j^p(\lambda y+\oxi_1)Z_{n+1}\bigg|\leq C\sum_{j\neq 1}\frac{\lambda^{n+2}}{|{\oxi}_j-{\oxi}_1|^{n+2}}\int_{B(0,\frac{\delta}{\lambda k})}\frac{1}{(1+|y|)^{n-2}}\\
&\leq C
\begin{cases}
(\lambda k)^{-2}\sum_{j\neq 1}\frac{\lambda^{n+2}}{|{\oxi}_j-{\oxi}_1|^{n+2}}\;\;\mbox{ if }n\geq 4,\\
|\ln (\lambda k)|\sum_{j\neq 1}\frac{\lambda^{n+2}}{|{\oxi}_j-{\oxi}_1|^{n+2}}\;\;\mbox{ if }n=3,
\end{cases}
\end{aligned}$$
$$
\begin{aligned}
\bigg|\sum_{j=1}^k\lambda^{\frac{n+2}{2}}&\int_{B(0,\frac{\delta}{\lambda k})}\uU_j^p(\lambda y+\uxi_1)Z_{n+1}\bigg|\leq C\sum_{j\neq 1}\frac{\lambda^{n+2}}{|{\uxi}_j-{\oxi}_1|^{n+2}}\int_{B(0,\frac{\delta}{\lambda k})}\frac{1}{(1+|y|)^{n-2}}\\
&\leq C
\begin{cases}
(\lambda k)^{-2}\sum_{j\neq 1}\frac{\lambda^{n+2}}{|{\uxi}_j-{\oxi}_1|^{n+2}}\;\;\mbox{ if }n\geq 4,\\
|\ln (\lambda k)|\sum_{j\neq 1}\frac{\lambda^{n+2}}{|{\uxi}_j-{\oxi}_1|^{n+2}}\;\;\mbox{ if }n=3,
\end{cases}
\end{aligned}$$
and
$$\begin{aligned}
\bigg|\lambda^{\frac{n+2}{2}}\int_{B(0,\frac{\delta}{\lambda k})}&U^p(\lambda y+\oxi_1)Z_{n+1}\,dy\bigg|\leq C\lambda^{\frac{n+2}{2}}\int_{B(0,\frac{\delta}{\lambda k})}\frac{1}{(1+|y|)^{n-2}}\\
&\leq C
\begin{cases}
\lambda^{\frac{n-2}{2}}k^{-2}\;\;\mbox{ if }n\geq 4,\\
\lambda^{\frac{n+2}{2}}|\ln (\lambda k)|\;\;\mbox{ if }n=3.
\end{cases}
\end{aligned}$$
Finally these three estimates, together with the mean value theorem, also imply
$$\begin{aligned}
\bigg|p\int_{B(0,\frac{\delta}{\lambda k})}&\left[(U+sV)^{p-1}-U^{p-1}\right]VZ_{n+1}\bigg|\\
&\leq C
\begin{cases}
\lambda^{\frac{n-2}{2}}k^{-2}+(\lambda k)^{-2}(\lambda k)^{n+2}\;\;\mbox{ if }n\geq 4,\\
\lambda^{\frac{n+2}{2}}|\ln (\lambda k)|+(\lambda k)^{n+2}|\ln (\lambda k)|\;\;\mbox{ if }n=3.
\end{cases}
\end{aligned}$$
Proceeding like in \cite[Proof of Claim 2]{dPMPP} we obtain the estimates of the other terms in \eqref{three}, that is,
\begin{equation}\begin{split}\label{Zfuera}
\bigg|\int_{Ext}E\oZ_{n+1}\bigg|\leq C\begin{cases}
\frac{1}{k^{n-1}}\;\;\mbox{ if }n\geq 4,\\
\frac{1}{(k\ln k)^2}\;\;\mbox{ if }n=3,
\end{cases}
\end{split}\end{equation}
\begin{equation}\label{errFuera2}
\bigg|\sum_{j\neq 1}\int_{B(\oxi_j,\frac{\delta}{k})}E\oZ_{n+1}\bigg|\leq C\begin{cases}
\frac{1}{k^{n-1}}\;\;\mbox{ if }n\geq 4,\\
\frac{1}{(k\ln k)^2}\;\;\mbox{ if }n=3,
\end{cases}
\bigg|\sum_{j=1}^k\int_{B(\uxi_j,\frac{\delta}{k})}E\oZ_{n+1}\bigg|\leq C\begin{cases}
\frac{1}{k^{n-1}}\;\;\mbox{ if }n\geq 4,\\
\frac{1}{(k\ln k)^2}\;\;\mbox{ if }n=3.
\end{cases}
\end{equation}
Claim 1 follows using estimates \eqref{uno} and \eqref{due}.

\noindent {\bf Proof of Claim 2.} Noticing that
$$\bigg|\int_{\R^n}(\overline{\zeta}_1-1)E\oZ_{n+1}\bigg|\leq C\bigg|\int_{\{|y-\oxi_1|\geq\frac{\delta}{k}\}}E\oZ_{n+1}\bigg|,$$
the result follows using \eqref{Zfuera} and  \eqref{errFuera2}.

\noindent {\bf Proof of Claim 3.} Decomposing the non linear term as in the proof of Proposition \ref{existPhi1} and using Proposition \ref{improvedEstimates2} it can be seen that
$$\bigg|\int_{\R^n}\mathcal{N}(\ophi_1,\phi)\oZ_{n+1}\bigg|\leq Ck^{3-n-\frac{n}{q}}\int_{\R^n}U^{p-1}|Z_{n+1}|,$$
and the claim holds for $n\geq 4$. If $n=3$ it follows from estimates \eqref{ftilde1}, \eqref{ftilde2} and the fact that
$$\bigg|\int_{\R^n}\mathcal{N}(\ophi_1,\phi)\oZ_{n+1}\bigg|\leq C\|\lambda^{\frac{n+2}{2}}\mathcal{N}(\ophi_1,\phi)(\lambda y+\oxi_1)\|_{**}\left(\int_{\R^n}\frac{dy}{(1+|y|)^{2n}}\right)^{\frac{q-1}{q}}.$$

\medskip

Next we proceed to compute  $\oc_3$. We write
$$\oc_{3}(\ell, t)=\int_{\R^n}E\oZ_{3}-\int_{\R^{n}}(1-\ozeta_1)E\oZ_{3}+\gamma\int_{\R^n}\mathcal{N}(\ophi_1,\phi)\oZ_{3}=0,$$
and we affirm that, for $\ell$ and $t$ as in \eqref{par1},

\noindent {\bf Claim 4:}
$$\int_{\R^n}E\oZ_3=\begin{cases}
D_n\frac{t\ell^{\frac{n}{n-2}}}{k^{n+1-\frac{2}{n-1}}}\left[d_n\frac{\ell}{t^{n-1}}-1\right]+\frac{1}{k^{n+1}}\Theta_k(\ell,t)\;\;\mbox{ if }n\geq 4,\\
F\frac{\ell^3 t}{\sqrt{\ln k}(k\ln k)^3}\left[f\frac{\ell}{t^2}-1\right]+\frac{1}{k^3(\ln k)^4}\Theta_k(\ell,t)\;\;\mbox{ if }n=3.
\end{cases}$$

\noindent {\bf Claim 5:}
$$\int_{\R^{n}}(1-\ozeta_1)E\oZ_{3}=
\begin{cases}
\frac{1}{k^{n+1}}\Theta_k(\ell,t)\;\;\mbox{ if }n\geq 4,\\
\frac{1}{k^3(\ln k)^4}\Theta_k(\ell,t)\;\;\mbox{ if }n=3.
\end{cases}$$

\noindent {\bf Claim 6:}
$$\int_{\R^n}\mathcal{N}(\ophi_1,\phi)\oZ_{3}=
\begin{cases}
\frac{1}{k^{\alpha}}\Theta_k(\ell,t)\;\;\mbox{ if }n\geq 4,\\
\frac{1}{k^3(\ln k)^4}\Theta_k(\ell,t)\;\;\mbox{ if }n=3,
\end{cases}$$
where $\alpha>n+1-\frac{2}{n-1}$.

These claims together imply the validity of the first equation in (\ref{systLT}) and (\ref{systLTn3}).

\noindent {\bf Proof of Claim 4.} We decompose again as
\begin{equation}\label{dec3}
\int_{\mathbb{R}^n}E\oZ_{3}=\int_{B(\oxi_1,\frac{\delta}{k})}E\oZ_{3}+\int_{Ext}E\oZ_{3}+\sum_{j\neq 1}\int_{B(\oxi_j,\frac{\delta}{k})}E\oZ_{3}+\sum_{j= 1}^k\int_{B(\uxi_j,\frac{\delta}{k})}E\oZ_{3},
\end{equation}
Proceeding as in \eqref{Zfuera} and \eqref{errFuera2} we get
\begin{equation}\label{fuera31}
\bigg|\int_{Ext}E\oZ_{3}\bigg|\leq C\lambda^{n-1}k^{n-1}\lambda^{-\frac{n-2}{2}}k^{-n\frac{q-1}{q}}\|(1+|y|)^{n+2-\frac{2n}{q}}E\|_{L^q(Ext)},
\end{equation}
\begin{equation}\label{fuera32}
\bigg|\sum_{j\neq 1}\int_{B(\oxi_j,\frac{\delta}{k})}E\oZ_{3}\bigg|\leq C\sum_{j\neq 1}\frac{\lambda^{n-1}(\lambda k)^{2-\frac{n}{q}}}{|\oxi_j-\oxi_1|^{n-1}}\|\, (1+ |y|)^{{n+2} - \frac {2n} q}\, \la^{n+2 \over 2} \gamma^{-1} E (\oxi_j + \la y ) \|_{L^q( |y| < {\delta \over \la k})},
\end{equation}
\begin{equation}\label{fuera33}
\bigg|\sum_{j= 1}^k\int_{B(\uxi_j,\frac{\delta}{k})}E\oZ_{3}\bigg|\leq C\sum_{j=1}^k\frac{\lambda^{n-1}(\lambda k)^{2-\frac{n}{q}}}{|\uxi_j-\oxi_1|^{n-1}}\|\, (1+ |y|)^{{n+2} - \frac {2n} q}\, \la^{n+2 \over 2} \gamma^{-1} E (\uxi_j + \la y ) \|_{L^q( |y| < {\delta \over \la k})}.
\end{equation}
For the first integral in \eqref{dec3} we separate as in \eqref{mainError}.
Noticing that $\oU_j(\lambda y +\oxi_1)$ is even with respect to the third coordinate, it follows that
\begin{equation}\label{1}
\lambda^{\frac{n-2}{2}}\int_{B(0,\frac{\delta}{\lambda k})}U^{p-1}\oU_j(\lambda y+\oxi_1) Z_{3}=0.
\end{equation}
Furthermore, using \eqref{e0},
$$
\lambda^{\frac{n-2}{2}}\int_{B(0,\frac{\delta}{\lambda k})}U^{p-1}\uU_j(\lambda y+\oxi_1) Z_{3}=c_n\frac{\lambda^{n-2}\tau\lambda I_2}{|\oxi_1-\uxi_j|^{n}}\left(1+\lambda^2\Theta_k(\ell,t)\right),
$$
and
$$
\lambda^{\frac{n-2}{2}}\int_{B(0,\frac{\delta}{\lambda k})}U^{p-1}U(\lambda y+\oxi_1) Z_3=\tilde{c}_n\lambda^{\frac{n-2}{2}}\tau\lambda I_2\left(1+\frac{\lambda^2}{1+|\oxi_1|^2}\Theta_k(\ell,t)\right),
$$
where
$I_2:=\int_{\R^n}U^{p-1}y_3Z_3.$
One also can compute the lower order terms
$$
\bigg|\sum_{j=1}^k\lambda^{\frac{n+2}{2}}\int_{B(0,\frac{\delta}{\lambda k})}\uU_j^p(\lambda y+\oxi_1) Z_{3}\bigg|\leq C (\lambda k)^{-1}(\lambda k)^{n+2}
$$
and
\begin{equation}\label{est12}
\bigg|\lambda^{\frac{n+2}{2}}\int_{B(0,\frac{\delta}{\lambda k})}U^p(\lambda y+\oxi_1) Z_{3}\,dy\bigg|\leq C\lambda^{\frac{n}{2}}k^{-1}.
\end{equation}
Thus, decomposing as in \eqref{mainError} Claim 4 is obtained from estimates \eqref{fuera31}-\eqref{est12} together with \eqref{formufinal}, \eqref{formufinal2} and \eqref{tre}.

\noindent {\bf Proof of Claim 5.} It follows straightforward from \eqref{fuera31}, \eqref{fuera32} and \eqref{fuera33}.

\noindent {\bf Proof of Claim 6.}
Assume first $n\geq 5$ and let us decompose $\mathcal{N}(\ophi_1,\phi)=f_1+f_2+f_3+f_4$ as in the proof of Proposition \ref{existPhi1}. Changing variables and using \eqref{f1} it can be seen that
$$
\bigg|\int_{\R^n}f_1(y)\oZ_3(y)\,dy \bigg|\leq C\lambda^{\frac{n-2}{2}}\int_{B(0,\frac{\delta}{\lambda k})}U(y)^{p-2}|\oophi_1(y)||Z_3|.
$$
Notice that in this region $\lambda^{1/2}\leq \frac{c}{|y|}$ and thus, by Proposition \ref{improvedEstimates2} we can write
$$|\oophi_1(y)|\leq C\frac{\lambda^{\beta}\lambda^{\frac{n-2}{2}-\beta}}{(1+|y|)^2}\leq C\frac{\lambda^{\beta}}{(1+|y|)^{n-2\beta}},$$
where $\beta:=\frac{3}{2}-\frac{1}{n-1}+\varepsilon$, $\varepsilon>0$ small. Replacing above we get
$$
\bigg|\int_{\R^n}f_1(y)\oZ_3(y)\,dy\bigg| \leq \frac{C}{k^{2\beta+n-2}}\int_{\R^n}\frac{1}{(1+|y|)^{-n+6}}\frac{|y_3|}{(1+|y|)^n}\frac{1}{(1+|y|)^{n-2\beta}}\,dy \leq \frac{C}{k^{2\beta+n-2}}
$$
for $\varepsilon$ small enough. Notice also that $2\beta+n-2>n+1-\frac{2}{n-1}$, that is the order of the main term. Likewise, using the estimate on $\oophi_1$,
$$
\bigg|\int_{\R^n}f_2(y)\oZ_3(y)\,dy\bigg| \leq C\lambda^{\frac{n-2}{2}}\int_{\{|y|>c\lambda^{-1/2}\}}\frac{U^{p-1}|Z_3|}{(1+|y|)^2}\,dy\leq C\lambda^{\frac{n-2}{2}+2}\int_{\R^n}\frac{|y_3|}{(1+|y|)^{n+2}}\,dy\leq \frac{C}{k^{n+2}}.
$$
To estimate the projection of $f_3$ we first point out that
$f_3 \approx \ozeta_1 p|\oU_1|^{p-1}\psi$. Due to the cancellation in \eqref{1} the main order term in Claim 4 is rather small, and this makes necessary sharp estimates on the size of the projection of the nonlinear term. Indeed, to prove this claim we will have to make use of the decomposition of $\psi$ in a large but symmetric part (that happens to be orthogonal to $\oZ_3$) and a non symmetric but small part specified in Proposition \ref{symEstPsi}. Thus,
$$\begin{aligned}
\int_{\R^n}f_3(y)\oZ_3(y)\,dy&\approx \lambda^{\frac{n-2}{2}}\int_{B(0,\frac{\delta}{\lambda k})}U(y)^{p-1}\psi(\oxi_1+\lambda y)Z_3\\
&=\lambda^{\frac{n-2}{2}}\int_{B(0,\frac{\delta}{\lambda k})}U^{p-1}\psi^s(y)Z_3\,dy+\lambda^{\frac{n-2}{2}}\int_{B(0,\frac{\delta}{\lambda k})}U^{p-1}\psi^*(y)Z_3\,dy.
\end{aligned}$$
The first integral in the right hand side vanishes due to the oddness of $Z_3$ and the evennes of $\psi^s$ in the third coordinate. The second can be estimated as
\begin{equation*}\begin{split}
\bigg|\lambda^{\frac{n-2}{2}}\int_{B(0,\frac{\delta}{\lambda k})}U^{p-1}\psi^*(y)Z_3\,dy\bigg|&\leq C\lambda^{\frac{n-2}{2}+1}(\|\oophi_1\|_*+\|\psi\|_*+\tau o_k(1))\int_{\R^n}U^{p-1}|y|(1+|y|)|Z_3|\,dy\\
&\leq \frac{C}{k^{n-1+\frac{n}{q}}},
\end{split}\end{equation*}
and hence, choosing $\frac{n}{2}<q<\frac{n}{2-\frac{2}{n-1}}$
we conclude that
$$\bigg|\int_{\R^n}f_3(y)\oZ_3(y)\,dy\bigg|\leq \frac{C}{k^\alpha}\quad \mbox{ with }\alpha>n+1-\frac{2}{n-1}.$$
Analogously, noticing that $f_4\approx |\oU_1|^{p-2}\left(\sum_{j=1}^k(\ophi_j+\uphi_j)+\psi\right)^2$, estimate
$$\bigg|\int_{\R^n}f_4(y)\oZ_3(y)\,dy\bigg|\leq \frac{C}{k^\alpha}\quad \mbox{ with }\alpha>n+1-\frac{2}{n-1},$$
follows by Proposition \ref{improvedEstimates}. This completes the proof of Claim 6 for $n\geq 5$.

Consider now the cases $n=3,4$. Using the decomposition of $\oophi_1$ found in Proposition \ref{estSimPart} we notice that 
$$\int_{\R^n}\tilde{f}_1^s(y)Z_3(y)\,dy =0,\qquad \int_{\R^n}\tilde{f}_2^s(y)Z_3(y)\,dy =0,$$
and we obtain, for $n=3$,
\begin{equation*}\begin{split}
&\bigg|\int_{\R^n}f_1(y)\oZ_3(y)\,dy\bigg|=\bigg|\int_{\R^n}\tilde{f}^*_1(y)Z_3(y)\,dy\bigg|\\
&\leq  C\left(\lambda^{\frac{1}{2}}\int_{B(0,\frac{\delta}{\lambda k})}U(y)^{p-2}|\overline{\overline{\phi_1^*}}(y)||Z_3(y)|\,dy+\frac{\lambda^{\frac{1}{2}}}{k(\ln k)^3}\int_{B(0,\frac{\delta}{\lambda k})}|y|U(y)^{p-2}|\oophi_1(y)||Z_3(y)|\,dy\right)\\
&\leq \frac{C}{k^3(\ln k)^5},
\end{split}\end{equation*}
and
\begin{equation*}\begin{split}
\bigg|\int_{\R^n}f_2(y)\oZ_3(y)\,dy\bigg|&=\bigg|\int_{\{|y|>c\lambda^{-1/2}\}}f_2^*(y)Z_3(y)\,dy\bigg|=\bigg|\int_{\{|y|>c\lambda^{-1/2}\}}U(y)^{p-1}\overline{\overline{\phi_1^*}}(y)Z_3(y)\,dy\bigg|\\
&\leq C\frac{\lambda^\frac{1}{2}\lambda}{k(\ln k)^3}\int_{\R^n}\frac{1}{(1+|y|)^2}\frac{|y_3|}{(1+|y|)^n}\leq\frac{C}{k^3(\ln k)^5},
\end{split}\end{equation*}
where in the last inequalities we have applied Proposition \ref{improvedEstimates2} and Proposition \ref{estSimPart}. 

Likewise, for $n=4$,
$$\bigg|\int_{\R^n}f_1(y)\oZ_3(y)\,dy\bigg|\leq \frac{C}{k^5},\qquad \bigg|\int_{\R^n}f_2(y)\oZ_3(y)\,dy\bigg|\leq \frac{C}{k^5}.$$
The terms involving $\tilde{f}_3$ and $\tilde{f}_4$ are estimated in a similar way and Claim 6 follows.

\section{Remark \ref{r16}: The general construction} \label{general}

\noindent This section is devoted to the constructions described in Remark \ref{r16}.
The first is the construction of the {\it doubling of the equatorial} $\Gamma$ with an even number of circles, which is done in subsection \ref{even}.
The second is a combination of the {\it doubling} and the {\it desingularization of the equatorial}
with an odd number of circles. This is done in subsection \ref{odd}.

\subsection{Even number of circles.} \label{even}
Let $m $ be a fixed integer. Let $\tau_i \in (0,1)$, $i=1, \ldots , m,$ and fix the  points
$$
 \overline P_i := (\sqrt{1-\tau_i^2 } , 0 , \tau_i , 0 , \ldots , 0) , \quad \underline P_i :=  (\sqrt{1-\tau_i^2 } , 0 , -\tau_i , 0 , \ldots , 0).
$$
Let $ \la_i \in (0,1)$, $i=1, \ldots , m,$ be  positive numbers, and define $R_i$ as
$
\la_i^2 + R_i^2 = 1.
$
We use the notation
$$
\la = (\la_1 , \ldots , \la_m) , \quad \tau = (\tau_1 , \ldots , \tau_m).
$$
Let $k$ be an integer number and
\begin{equation}\label{a11}
\textbf{u}_{2m} [\la, \tau ] (y) := U(y) -\sum_{j=1}^k \bigg[ \sum_{i=1}^m \underbrace{ \la^{-{n-2 \over 2}}  U\left( {y-\oxi_{ij} \over \la} \right)}_{\oU_{ij} (y)}
+ \sum_{i=1}^m \underbrace{ \la^{-{n-2 \over 2}}  U\left( {y- \uxi_{ij} \over \la} \right)}_{ \uU_{ij} (y) } \bigg]
\end{equation}
for $y \in \R^n$, 
where, for $i=1, \ldots , m$, and $j=1, \ldots , k$,
$$\begin{aligned}
\oxi_{ij} &:= R_i (\sqrt{1-\tau_i^2} \cos \theta_j , \sqrt{1-\tau_i^2} \sin \theta_j , \tau_i , 0, \ldots , 0 ), \\
 \uxi_{ij} &:= R_i (\sqrt{1-\tau_i^2} \cos \theta_j , \sqrt{1-\tau_i^2} \sin \theta_j , -\tau_i , 0, \ldots , 0 ),
 \quad {\mbox {with}} \quad \theta_j := 2\pi {j-1 \over k}.
\end{aligned}$$
Observe that the function \eqref{a11} satisfies the symmetries \eqref{ikt}, \eqref{ip} and \eqref{ir}.
We assume that the integer $k$ is large, and that the parameters  $\la $ and $\tau$ are given by
\begin{equation}\begin{split} \label{par1nn}
\la_i &:= {\ell_i^{2\over n-2} \over k^2} ,  \quad  \tau_i := {t_i \over k^{1-{2\over n-1}} },  \quad {\mbox {if}} \quad n\geq 4, \\
\la_i &:= {\ell_i^{2} \over k^2 (\ln k)^2 } , \quad  \tau_i := {t_i \over \sqrt{\ln k} },  \quad {\mbox {if}} \quad n=3,
\end{split}  \quad
\quad {\mbox {
where}} \quad
\eta <  \ell_i , \, t_i < \eta^{-1}
\end{equation}
for some $\eta$ small and fixed, independent of $k$, for any $k$ large enough. The {\it doubling of the equatorial} $\Gamma$ with an even number of circles is the content of next

\begin{theorem}\label{theo1}
Let $n\geq 3$ and let $k$ be a positive integer. Then for any sufficiently large $k$ there is a finite energy solution to \eqref{prob} of the form
$$u(y)=\textbf{u}_{2m} [\la, \tau ] (y)  +o_k(1)(1+|\la|^{-{n-2 \over 2}}),$$
where
the term $o_k(1)\rightarrow 0$ uniformly on compact sets of $\R^n$ as  $k \to \infty$.
\end{theorem}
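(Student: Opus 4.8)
The plan is to repeat, essentially line by line, the Lyapunov--Schmidt scheme of Sections \ref{secError}--\ref{thm}, now with the single mirror pair of circles replaced by $m$ of them, and then to solve a $2m$-dimensional reduced system. I would start from the approximation $\textbf{u}_{2m}=\textbf{u}_{2m}[\la,\tau]$ of \eqref{a11}--\eqref{par1nn}, which by construction enjoys the same symmetries \eqref{ikt}, \eqref{ip}, \eqref{ir} as $\textbf{u}$, so that it suffices to describe the error $E:=\Delta\textbf{u}_{2m}+\gamma|\textbf{u}_{2m}|^{p-1}\textbf{u}_{2m}$ on each ball $B(\oxi_{i1},\delta/k)$, $i=1,\dots,m$, and on the exterior region. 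The only genuinely new ingredient at this stage is the asymptotics of the lattice sums coupling \emph{distinct} circles: since all $\tau_i$ share the order $k^{-(1-2/(n-1))}$ (resp. $(\ln k)^{-1/2}$ for $n=3$) and all $\la_i$ the order $k^{-2}$ (resp. $k^{-2}(\ln k)^{-2}$), one shows, as in the Appendix and keeping $|\tau_i-\tau_{i'}|$ bounded away from $0$,
\begin{align*}
\sum_{j=2}^k\frac{1}{|\oxi_{i1}-\oxi_{ij}|^{n-2}}&\sim A_n\,k^{n-2}\big(1+O(\tau_i^2)\big),\\
\sum_{j=1}^k\frac{1}{|\oxi_{i1}-\uxi_{i'j}|^{n-2}}&\sim\frac{B_n\,k}{(\tau_i+\tau_{i'})^{n-3}},\qquad
\sum_{j=1}^k\frac{1}{|\oxi_{i1}-\oxi_{i'j}|^{n-2}}\sim\frac{B_n\,k}{|\tau_i-\tau_{i'}|^{n-3}}\quad(i'\neq i),
\end{align*}
with the obvious logarithmic modifications when $n=3$. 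Hence Proposition \ref{Pest2} persists with the bound $\|E\|_{**}\le Ck^{1-n/q}$ for $n\ge4$ (resp. $C|\ln k|^{-1}$ for $n=3$), and Proposition \ref{symErrInt} persists near every $\oU_{i1}$ with the same symmetric/non-symmetric splitting $E=E^s+E^*$ and the same size of $E^*$.

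The linear theory of Section \ref{linear} involves only the standard bubble $U$ and requires no change. In the gluing step I would look for $u=\textbf{u}_{2m}+\phi$ with $\phi=\sum_{i=1}^m\sum_{j=1}^k(\overline\phi_{ij}+\underline\phi_{ij})+\psi$, imposing on each pair $\overline\phi_{ij},\underline\phi_{ij}$ the conditions \eqref{cond1}--\eqref{cond3} relative to the $i$-th circle and on $\psi$ the global symmetries \eqref{sym1}--\eqref{sym3}. Then Proposition \ref{existPsi} goes through verbatim, yielding $\psi=\Psi(\overline\phi_{11},\dots,\overline\phi_{m1})$ with the same estimates, and for each $i$ the inner equations for $\overline\phi_{ij},\underline\phi_{ij}$ reduce, exactly as in \eqref{eq1}--\eqref{eq2}, to a single projected problem for $\overline\phi_{i1}$ carrying two Lagrange multipliers $\oc_3^{(i)},\oc_{n+1}^{(i)}$, associated with the kernel directions $\partial_{y_3}$ and the dilation of $\oU_{i1}$ (the other kernel directions being annihilated by the imposed symmetries). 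Propositions \ref{existPhi1}, \ref{improvedEstimates2}, \ref{estSimPart} and \ref{symEstPsi} then produce, for each $i$, a function $\overline\phi_{i1}=\overline\phi_{i1}(\ell,t)$, with $\ell=(\ell_1,\dots,\ell_m)$ and $t=(t_1,\dots,t_m)$, enjoying the same pointwise bounds and the same symmetric/non-symmetric decomposition.

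It then remains to choose $\ell,t$ with $\eta<\ell_i,t_i<\eta^{-1}$ making all $2m$ multipliers vanish. Repeating the computations of Section \ref{thm} for each $i$ gives, for $n\ge4$,
\begin{align*}
\oc_{n+1}^{(i)}(\ell,t)&=E_n\frac{\ell_i}{k^{n-2}}\big[e_n\ell_i^2-1\big]+\frac{1}{k^\beta}\Theta_k(\ell,t),\qquad \beta>n-2,\\
\oc_3^{(i)}(\ell,t)&=\frac{D_n\,t_i\,\ell_i^{n/(n-2)}}{k^{\,n+1-2/(n-1)}}\big[d_n\,\ell_i\,\mathcal{G}_i(t)-1\big]+\frac{1}{k^\alpha}\Theta_k(\ell,t),\qquad \alpha>n+1-\tfrac{2}{n-1},
\end{align*}
together with the analogues for $n=3$ (carrying the $\ln k$ weights of \eqref{systLTn3}); here $\mathcal{G}_i(t)$ is smooth and positive, reduces to $t_i^{-(n-1)}$ when $m=1$, and gathers the repulsion of the $i$-th upper circle by all the remaining $2m-1$ circles --- its own mirror at $-\tau_i$ and the circles at heights $\pm\tau_{i'}$, $i'\neq i$ --- through terms built from $(t_i+t_{i'})^{-(n-2)}$ and $\mathrm{sign}(t_i-t_{i'})|t_i-t_{i'}|^{-(n-2)}$ (logarithmic for $n=3$). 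The leading system therefore \emph{decouples} in $\ell$: each $\oc_{n+1}^{(i)}=0$ forces $e_n\ell_i^2=1$ at main order (the inter-circle corrections being of lower order in this equation), while the equations $\oc_3^{(i)}=0$ reduce, for the $t_i$, to the equilibrium system of $m$ positive charges in a harmonic confining potential with Riesz $|\cdot|^{-(n-2)}$ (logarithmic if $n=3$) mutual repulsion and image charges at $-t_i$. This configuration problem has a solution with the $t_i$ pairwise distinct (a critical point of the associated reduced energy, which is coercive and blows up as two charges collide or one escapes to $0$ or to $\infty$; for $n=3$ it is the strictly convex Gaussian Fekete problem, so the equilibrium is unique and nondegenerate, and for $n\ge4$ one argues by degree). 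A fixed-point/degree argument absorbing the $\Theta_k$ remainders, exactly as in the last part of Section \ref{thm}, then provides $(\ell^*,t^*)$ in the required range killing every $\oc_3^{(i)},\oc_{n+1}^{(i)}$, so that $u=\textbf{u}_{2m}+\phi$ solves \eqref{prob} and has the claimed form.

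I expect the main obstacle to be this last step. Unlike in Theorem \ref{theo}, the reduced system is genuinely coupled, so one must (i) carry out the Appendix-type lattice-sum estimates for \emph{every} pair of circles, verifying that the remainders $\Theta_k$ stay uniformly small as long as the $t_i$ remain separated; and (ii) solve, and --- for the contraction/implicit-function step --- control the linearisation of, the limiting equilibrium configuration of the $t_i$. Once this configuration problem is settled (it is classical when $n=3$), the rest of Sections \ref{secError}--\ref{thm} transfers with purely notational changes.
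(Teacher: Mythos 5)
Your proposal follows the same overall route as the paper's Section \ref{general}: the approximation \eqref{a11} with the same symmetry classes, the transfer of the error, linear and gluing estimates of Sections \ref{secError}--\ref{gluing} (including the symmetric/non-symmetric splittings), and a final reduction to annihilating the $2m$ projections $c_{i3}$, $c_{i,n+1}$ as in \eqref{sisnn}. The genuine difference is the structure you assign to the reduced system. The paper asserts that \eqref{sisnn} decouples at main order into $m$ copies of the scalar system of Theorem \ref{theo}, i.e.\ $a_n\ell_i^2-1\approx 0$ and $b_n\ell_i/t_i^{n-1}-1\approx 0$, with all inter-circle effects relegated to the remainders; you instead keep the interactions between distinct pairs at leading order in the $\oc_3^{(i)}$ equations through your function $\mathcal{G}_i(t)$. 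Your bookkeeping is the sharper one: since all $\tau_i$ and all mutual separations $|\tau_i\pm\tau_{i'}|$ share the order $k^{-(n-3)/(n-1)}$ (resp.\ $(\ln k)^{-1/2}$ for $n=3$), the sums $\sum_j|\oxi_{i1}-\oxi_{i'j}|^{-(n-2)}$ and $\sum_j|\oxi_{i1}-\uxi_{i'j}|^{-(n-2)}$ have exactly the size of the mirror sum \eqref{due}, and their projections onto $\oZ_{i3}$ enter at the same order $k^{-(n+1)+\frac{2}{n-1}}$ as the two terms kept in \eqref{systLT}; only the dilation equations $c_{i,n+1}=0$ truly decouple. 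So the coupled equilibrium problem for $(t_1,\dots,t_m)$ that you write down is the correct limit system, and solving it (rather than $m$ independent copies of the one-pair balance) is what the final step really requires; on this point your route is a refinement of, not merely a transcription of, the paper's argument.

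Two caveats on your treatment of that limit problem. First, the signs: all the scaled bubbles carry the same (negative) sign, so in the reduced energy the circles \emph{attract} one another and their images, while the quadratic term pushes them away from the equator; the ``repulsive charges in a harmonic confining potential'' functional you invoke is the \emph{negative} of the reduced energy, which in fact tends to $-\infty$ at collisions, so as literally phrased (``coercive, blows up as two charges collide'') your variational statement is wrong. The critical-point equations are of course unchanged, and minimising the sign-flipped energy on the ordered chamber $0<t_1<\dots<t_m$ — where it is strictly convex and coercive, for $n\ge 4$ as well as in the logarithmic case $n=3$, because $s\mapsto s^{-(n-2)}$ and $s\mapsto -\ln s$ are convex — yields a unique nondegenerate configuration with distinct $t_i$; say this explicitly, since this nondegeneracy (uniform invertibility of the linearised $2m\times 2m$ system) is precisely what lets you absorb the $\Theta_k$ remainders by a fixed point or the implicit function theorem. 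Second, as you yourself note, the Appendix-type expansions and the smallness and Lipschitz bounds on $\Theta_k$ are only valid while $k|\tau_i-\tau_{i'}|\to\infty$, i.e.\ while the $t_i$ stay separated; this is consistent because the limiting configuration has pairwise distinct $t_i$, but the fixed-point region must be chosen inside such a separation zone. With these two points made precise, your argument is complete and, in the reduction step, more careful than the decoupled form displayed in the paper.
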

The solution in Theorem \ref{theo1} has the form
$$u (y) = \textbf{u}_{2m} [\la, \tau ] (y) + \phi (y ), \quad \phi=  \psi + \sum_{j=1}^k \sum_{i=1}^m ( \bar \phi_{ij} + \underline \phi_{ij} )$$
where $\overline{\phi}_{ij}$, $\underline{\phi}_{ij}$, $i=1, \ldots , m$, $j=1, \ldots , k$, and $\psi$ solve the following system of coupled non linear equations
\begin{equation}\label{system1nn}
\Delta \ophi_{ij}+p\gamma |\textbf{u}_{2m}|^{p-1}\ozeta_{ij}\ophi_{ij}+\ozeta_{ij}\left[p\gamma |\textbf{u}_{2m}|^{p-1}\psi+E_{2m}+\gamma N(\phi)\right]=0, \; \;
\end{equation}
\begin{equation}\label{system2nn}
\Delta \uphi_{ij}+p\gamma |\textbf{u}_{2m}|^{p-1}\uzeta_{ij}\uphi_{ij}+\uzeta_{ij}\left[p\gamma |\textbf{u}_{2m}|^{p-1}\psi+E_{2m}+\gamma N(\phi)\right]=0, \; \;
\end{equation}
\begin{equation}\label{system3nn}\begin{split}
\Delta\psi&+p\gamma U^{p-1}\psi+\left[p\gamma(|\textbf{u}_{2m}|^{p-1}-U^{p-1})(1-\sum_{j=1}^k \sum_{i=1}^m(\ozeta_{ij}+\uzeta_{ij}))\right.\\
&\left.+p\gamma U^{p-1}[\sum_{j=1}^k \sum_{i=1}^m(\ozeta_{ij}+\uzeta_{ij})\right]\psi+ p\gamma |\textbf{u}_{2m}|^{p-1}\sum_{j=1}^k \sum_{i=1}^m (1-\ozeta_{ij})\ophi_{ij}\\
&+p\gamma |\textbf{u}_{2m}|^{p-1}\sum_{j=1}^k \sum_{i=1}^m (1-\uzeta_{ij})\uphi_{ij}+\left(1-\sum_{j=1}^k \sum_{i=1}^m(\ozeta_{ij}+\uzeta_{ij})\right)(E_{2m}+\gamma N(\phi))=0.
\end{split}\end{equation}
The functions $\overline \zeta_{ij}$ are defined as $\ozeta_j$ in \eqref{ccut} with $\bar \xi_j$ replaced by  $\bar \xi_{ij}$,
and 
$$\underline \zeta_{ij}(y):=\overline{\zeta}_{ij}(y_1,y_2,-y_3,\ldots,y_n),\quad 
E_{2m} (y) := \Delta \textbf{u}_{2m} + \gamma |\textbf{u}_{2m}|^{p-1} \textbf{u}_{2m}, \quad y \in \R^n,
$$
and $
N(\phi):=|\textbf{u}_{2m}+\phi|^{p-1}(\textbf{u}_{2m}+\phi)-|\textbf{u}_{2m}|^{p-1}\textbf{u}_{2m}-p|\textbf{u}_{2m}|^{p-1}\phi $.
One can prove that
$$
\| E_{2m} \|_{**} \leq C k^{1-\frac{n}{q}}\;\;\mbox{if }n\geq 4,\quad
\| E_{2m} \|_{**} \leq C(\ln k)^{-1}\;\;\mbox{if }n=3.
$$
Denoting $\hat{y}:=(y_1,y_2)$ and $y':=(y_3,\ldots,y_n)$,
we assume that the functions
$\ophi_{ij}$ and $\uphi_{ij}$ satisfy
$$
\ophi_{ij}(\hat{y},y')=\ophi_{i1}(e^{2\pi\frac{ \textcolor{black}{(j-1)}}{k}i} \hat{y},y'),\quad
\quad
\ophi_{i1}(y)=|y|^{2-n}\ophi_{i1}(|y|^{-2}y),
$$
$$
\ophi_{i1}(y_1,\ldots,y_\alpha,\ldots,y_n)=\ophi_{i1}(y_1,\ldots,-y_\alpha,\ldots,y_n),\;\; \alpha=2,4,\ldots,n,
$$
and
$$
\uphi_{ij}(y)=\ophi_{ij}(y_1,y_2,-y_3,\ldots,y_n).
$$
Moreover
$$(\ophi_{ij}+\uphi_{ij})(y)=(\ophi_{ij}+\uphi_{ij})(y_1,y_2,-y_3,\ldots,y_n),$$
as well as $(\ozeta_{ij}+\uzeta_{ij})$ and $(\ozeta_{ij}\ophi_{ij}+\uzeta_{ij}\uphi_{ij})$.

\noindent For $\rho>0$ small and fixed we assume in addition
$$
\sum_{i=1}^m \|\overline{\ophi}_{i1}\|_*\leq \rho,
$$
where $\overline{\ophi}_{i1}(y):=\lambda^{\frac{n-2}{2}}\overline{\phi}_{i1}(\oxi_{i1}+\lambda y)$ and $\|\cdot\|_*$ is defined in \eqref{normStar}.

Arguing as in Proposition \ref{existPsi}, one proves that there exists a unique solution $\psi=\Psi(\overline{\ophi}_{11} , \ldots , \overline{\ophi}_{m1})$ of \eqref{system3nn}, satisfying \eqref{sym1}, \eqref{sym2} and \eqref{sym3}. Besides
\begin{equation*}\begin{split}
\|\psi\|_*&\leq C\left(\sum_{i=1}^m \|\overline{\ophi}_{i1}\|_*+k^{1-\frac{n}{q}}\right)\;\;\mbox{if }n\geq 4,\quad
\|\psi\|_*\leq C\left( \sum_{i=1}^m \|\overline{\ophi}_{i1}\|_*+(\ln k)^{-1}\right)\;\;\mbox{if }n=3.
\end{split}\end{equation*}
We replace the solution $\psi=\Psi(\overline{\ophi}_{11} , \ldots , \overline{\ophi}_{m1})$ of \eqref{system3nn} in \eqref{system1nn} and \eqref{system2nn}. Using the symmetries we described before, it is enough to solve \eqref{system1nn} for $j=1$. We are thus left with a system of $m$
equations in $\overline \phi_1 = (\overline \phi_{11} , \ldots , \overline \phi_{m1} )$  unknowns
$$
\Delta \ophi_{i1}+p\gamma |\textbf{u}_{2m}|^{p-1}\ozeta_{i1}\ophi_{i1}+\ozeta_{i1}\left[p\gamma |\textbf{u}_{2m}|^{p-1}\psi+E_{2m}+\gamma N(\phi)\right]=0, \quad i=1, \ldots , m.
$$
Instead of solving it directly, we first solve the auxiliary problem
\begin{equation}\label{phiEven}
\Delta \ophi_{i1}+p\gamma |\overline{U}_{i1}|^{p-1}\ophi_{i1}+\ozeta_{i1} E_{2m} +\gamma\mathcal{N}_i(\overline{\phi}_1,\phi)=c_{i3}\oU_{i1}^{p-1}\oZ_{i3}+c_{i,n+1}\oU_{i1}^{p-1}\oZ_{i,n+1},\end{equation}
where
$$\mathcal{N}_i(\overline{\phi}_1,\phi):=p(|\textbf{u}_{2m}|^{p-1}\ozeta_1-|\overline{U}_{i1}|^{p-1})\ophi_{i1}+\ozeta_1\left[p|\textbf{u}_{2m}|^{p-1}\Psi(\overline{\ophi}_1)
+N(\phi)\right],$$
$$\overline{Z}_{i\alpha}(y):=\lambda^{-\frac{n-2}{2}}Z_\alpha\left(\frac{y-\oxi_{i1}}{\lambda}\right),\;\;\alpha=3, n+1$$
and
$$
c_{i3}:=\frac{\int_{\R^n}(\ozeta_{i1} E_{2m}+\gamma\mathcal{N}_i(\overline{\phi}_1,\phi))\oZ_{i3}}{\int_{\R^n}\oU_{i1}^{p-1}\oZ^2_{i3}},\qquad c_{i, n+1}:=\frac{\int_{\R^n}(\ozeta_{i1} E_{2m} +\gamma\mathcal{N}_i (\overline{\phi}_1,\phi))\oZ_{i, n+1}}{\int_{\R^n}\oU_{i1}^{p-1}\oZ^2_{i,n+1}}.
$$
Arguing as in Proposition \ref{existPhi1}, one proves that
there exists a unique solution $\overline{\phi}_{i1}=\overline{\phi}_{i1}(l,t)$ of \eqref{phiEven}, that satisfies
$$\|\oophi_{i1}\|_*\leq C k^{-\frac{n}{q}}\mbox{ if }n\geq 4,\qquad \|\oophi_{i1}\|_*\leq \frac{C}{k\ln k}\mbox{ if }n=3,$$
and
$$\|\overline{\overline{\mathcal{N}}}(\overline{\phi}_{i1},\phi)\|_{**}\leq Ck^{-\frac{2n}{q}}\mbox{ if }n\geq 4,\qquad \|\overline{\overline{\mathcal{N}}}(\overline{\phi}_{i1},\phi)\|_{**}\leq \frac{C}{(k\ln k)^2}\mbox{ if }n=3,$$
where $\oophi_{i1}(y):=\lambda^\frac{n-2}{2}\ophi_{i1}(\oxi_{i1}+\lambda y)$ and $\overline{\overline{\mathcal{N}}}(\overline{\phi}_{i1},\phi)(y):=\lambda^\frac{n+2}{2}\mathcal{N}(\overline{\phi}_{i1},\phi)(\oxi_{i1}+\lambda y)$. Furthermore, proceeding as in Proposition \ref{estSimPart} and Proposition \ref{symEstPsi}, there exist  decompositions
$$\oophi_{i1}=\overline{\overline{\phi_{i1}^s}}+\overline{\overline{\phi_{i1}^*}},\qquad \psi(\oxi_{i1}+\lambda y)=\psi^s(y)+\psi^*(y),$$
where $\overline{\overline{\phi_{i1}^s}}$ and $\psi^s$ are even with respect to $y_3$
and
$$|\overline{\overline{\phi_{i1}^*}}(y)|\leq \begin{cases}
C\frac{\lambda^{\frac{n-2}{2}}}{k}\frac{1}{1+|y|}\mbox{ if }n\geq 4,\\
C\frac{\lambda^{1/2}}{k(\ln k)^3}\frac{1}{1+|y|^\alpha},\quad 0<\alpha<1,\quad\mbox{ if }n\geq 3,
\end{cases}\qquad |\psi^*(y)|\leq C\lambda k^{1-\frac{n}{q}}|y|(1+|y|).$$

In order to complete the proof of Theorem \ref{theo1}, we need to find
 positive parameters $\ell_1 , \ldots \ell_m$ and $t_1 , \ldots , t_m$   entering in the definition of $\lambda$ and $\tau$ in \eqref{par1nn}
 so that for all $i=1, \ldots , m$
 \begin{equation}\label{sisnn}
 c_{i3} (\ell, t) = c_{i, n+1} (\ell , t) = 0, \quad \ell = (\ell_1 , \ldots \ell_m), \quad t= ( t_1 , \ldots , t_m ).
 \end{equation}
 In dimension $n\geq 4$, this system decouples and becomes
$$\begin{aligned}
&a_n \ell_i^2-1+\frac{1}{k^{2 {n-3 \over n-1}}}\Theta_{i, n+1, k}(\ell_1 , \ldots \ell_m ,t_1 , \ldots , t_m ) =0, \quad i=1, \ldots , m, \\
&b_n{ \ell_i \over t_i^{n-1} }-1+\frac{1}{k^{n-3 \over n-1} }\Theta_{i, 3, k}(\ell_1 , \ldots \ell_m ,t_1 , \ldots , t_m)=0, \quad i=1, \ldots , m,
\end{aligned}$$
where $a_n$, $b_n$ are positive constants that are independent of $k$, and $\Theta_{i, n+1, k}$, $\Theta_{i, 3, k}$ are smooth functions of their argument, which are uniformly bounded, together with their first derivatives, as $k \to \infty$.

In dimension $n=3$, the system becomes
$$\begin{aligned}
&a_3 \ell_i^2-1+\frac{\ln \ln k }{\ln k }\Theta_{i, 4, k}(\ell_1 , \ldots \ell_m ,t_1 , \ldots , t_m ) =0, \quad i=1, \ldots , m, \\
&b_3{ \ell_i \over t_i^{2} }-1+\frac{\ln \ln k }{\ln k } \Theta_{i, 3, k}(\ell_1 , \ldots \ell_m ,t_1 , \ldots , t_m)=0, \quad i=1, \ldots , m,
\end{aligned}$$
where $a_3$, $b_3$ are positive constants, and $\Theta_{i, 4, k}$, $\Theta_{i, 3, k}$ are smooth functions of their argument, which are uniformly bounded, together with their first derivatives, as $k \to \infty$.
A fixed point argument gives the existence of $\ell$ and $t$ solutions to \eqref{sisnn}.
This concludes the proof of Theorem \ref{theo1}.

\subsection{Odd number of circles.} \label{odd}
Let $ \mu \in (0,1)$ and define $R $ so that
$
\mu^2 + R^2 = 1.
$
Let $k$, $m$ be  integer numbers and
\begin{equation}\label{a11n}
\textbf{u}_{2m+1} [\mu, \la, \tau ] (y) := U(y) -\sum_{j=1}^k \left[ U_j (y) + \sum_{i=1}^m \left( \oU_{ij} (y)
+  \uU_{ij} (y) \right) \right]
\end{equation}
where $\la$, $\tau$, $\oU_{ij}$ and $\uU_{ij}$ are defined at the beginning of subsection \ref{even} and \eqref{a11}, while
$$
U_j (y) := \mu^{-{n-2 \over 2}} U({y-\xi_j \over \mu} ), \quad \xi_j:= R ( \cos \theta_j ,  \sin \theta_j , 0 , 0, \ldots , 0 ) \quad \theta_j := 2\pi {j-1 \over k}.
$$
The function \eqref{a11n} satisfies the symmetries \eqref{ikt}, \eqref{ip} and \eqref{ir}.
We assume that the integer $k$ is large, and that the parameters  $\mu$, $\la $ and $\tau$ are given by
\begin{equation}\begin{split} \label{par1nnn}
\mu &:= {\ell^{2\over n-2} \over k^2}    \quad {\mbox {if }}  n\geq 4, \quad
\mu := {\ell^{2} \over k^2 (\ln k)^2 }    \quad {\mbox {if }}  n=3,\\
\la_i &:= {\ell_i^{2\over n-2} \over k^2} ,  \quad  \tau_i := {t_i \over k^{1-{2\over n-1}} },  \quad {\mbox {if }}  n\geq 4, \quad 
\la_i := {\ell_i^{2} \over k^2 (\ln k)^2 } , \quad  \tau_i := {t_i \over \sqrt{\ln k} },  \quad {\mbox {if }}  n=3,
\end{split}
\end{equation}
where $\eta <  \ell , \ell_i \, , \, t_i < \eta^{-1}$ for some $\eta$ small and fixed, independent of $k$, for any $k$ large enough. We have
\begin{theorem}\label{theo2}
Let $n\geq 3$ and let $k$ be a positive integer. Then for any sufficiently large $k$ there is a finite energy solution to \eqref{prob} of the form
$$u(y)=\textbf{u}_{2m+1} [\mu, \la, \tau ] (y)  +o_k(1)(1+(\mu + |\la|)^{-{n-2 \over 2}}),$$
where
the term $o_k(1)\rightarrow 0$ uniformly on compact sets of $\R^n$ as  $k \to \infty$.
\end{theorem}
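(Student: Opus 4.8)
The plan is to run the same Lyapunov--Schmidt scheme used for Theorems \ref{theo} and \ref{theo1}, now carrying, on top of the $2m$ families of doubling bubbles $\oU_{ij},\uU_{ij}$, the extra family of $k$ equatorial bubbles $U_j$ with its own scaling $\mu$. I would look for a solution $u=\textbf{u}_{2m+1}[\mu,\la,\tau]+\phi$ with
\[
\phi=\psi+\sum_{j=1}^k\Big[\phi_j^{0}+\sum_{i=1}^m(\ophi_{ij}+\uphi_{ij})\Big],
\]
where $\phi_j^{0}$ is the correction localized near $U_j$, cut off by functions $\zeta_j^{0}$ defined as $\ozeta_j$ in \eqref{ccut} but centred at $\xi_j$; since $U_j$ lies on the hyperplane $\{y_3=0\}$, $\phi_j^{0}$ is required to be even in $y_3$ and needs no reflected partner. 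Imposing on the corrections the rotational symmetry under $\hat y\mapsto e^{2\pi(j-1)i/k}\hat y$, the evenness of $\phi_1^{0}$ and $\ophi_{i1}$ in $y_\alpha$ for the appropriate indices, Kelvin invariance, and the relations \eqref{cond1}--\eqref{cond3} for the doubling corrections, the equation $\Delta\phi+p\gamma|\textbf{u}_{2m+1}|^{p-1}\phi+E_{2m+1}+\gamma N(\phi)=0$ decouples, exactly as in Section \ref{gluing}, into an equation for $\psi$, an equation for $\phi_1^{0}$, and $m$ pairs of equations for $\ophi_{i1},\uphi_{i1}$.

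First I would record the error estimate. Because $\mu=O(k^{-2})$ (resp. $O((k\ln k)^{-2})$ if $n=3$), the bubbles $U_j$ desingularize $\Gamma$ precisely as in \cite{dPMPP}, so the new contributions to $E_{2m+1}$ have the same order as those already analysed in Proposition \ref{Pest2}; a computation identical to the one there gives $\|E_{2m+1}\|_{**}\le Ck^{1-n/q}$ for $n\ge4$ and $\le C|\ln k|^{-1}$ for $n=3$. Next, the contraction argument of Proposition \ref{existPsi} produces a unique $\psi=\Psi(\overline{\overline{\phi_1^{0}}};\oophi_{11},\dots,\oophi_{m1})$ of the analogue of \eqref{system3nn} with $\|\psi\|_*\le C(\|\overline{\overline{\phi_1^{0}}}\|_*+\sum_i\|\oophi_{i1}\|_*+k^{1-n/q})$ (resp. with $(\ln k)^{-1}$), together with the gradient bound $\|\nabla\psi\|_{L^\infty}\le C$ as in Proposition \ref{improvedEstimates}. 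Then one sets up the projected problems: for $\phi_1^{0}$, projecting only against the kernel elements surviving the symmetries (as in \cite[Proposition 4.1]{dPMPP}, this leaves a single scalar obstruction, from the dilation $Z_{n+1}$), and for each $\ophi_{i1}$, projecting against $\oZ_{i3}$ and $\oZ_{i,n+1}$. Solving all of them simultaneously by one fixed point in the product space yields $\|\overline{\overline{\phi_1^{0}}}\|_*,\|\oophi_{i1}\|_*\le Ck^{-n/q}$ (resp. $C/(k\ln k)$), and then the refined decompositions $\oophi_{i1}=\overline{\overline{\phi_{i1}^s}}+\overline{\overline{\phi_{i1}^*}}$ and $\psi(\oxi_{i1}+\la y)=\psi^s+\psi^*$ of Propositions \ref{estSimPart} and \ref{symEstPsi}, with the non-symmetric parts of size $O(\lambda^{(n-2)/2}/k)$ and $O(\lambda^{1/2}/(k(\ln k)^3))$ respectively.

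The delicate step, exactly as for Theorem \ref{theo}, is the finite-dimensional reduction: one must choose $\ell$ (entering $\mu$) and $\ell_i,t_i$ (entering $\la_i,\tau_i$) so that every projection constant vanishes. At main order this system decouples: the $\phi_1^{0}$-obstruction gives a single scalar equation in $\ell$ of the same type solved in \cite[Section 4]{dPMPP} for the pure desingularization; the $2m$ pairs coming from the doubling families take the shape of \eqref{systLT}--\eqref{systLTn3}, namely $a_n(c_n\ell_i^2-1)+(\text{smaller})=0$ and $b_n(\ell_i t_i^{-(n-1)}-1)+(\text{smaller})=0$ (with the corresponding $\ln k$ versions when $n=3$). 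The only point requiring genuine extra care is the $\oZ_{i3}$-equation, whose main order vanishes because $\oU_{ij}(\la y+\oxi_{i1})$, $U_j(\la y+\oxi_{i1})$ and $U(\la y+\oxi_{i1})$ are all even in $y_3$, so the leading contribution of their projection against the odd function $Z_3$ cancels; this forces a lower-order expansion in which the crude $\|\cdot\|_*$-bound on the nonlinear term is not good enough. There one reuses verbatim the mechanism of Propositions \ref{estSimPart} and \ref{symEstPsi}: only the non-symmetric parts $\overline{\overline{\phi_{i1}^*}}$ and $\psi^*$ contribute to $\oc_{i3}$, and they are small enough. The new check, absent in Theorems \ref{theo} and \ref{theo1}, is that the equatorial bubbles $U_j$—being invariant under $y_3\mapsto-y_3$—do not spoil this cancellation: in the analogue of the proof of Proposition \ref{symEstPsi}, the contribution of the $U_j$'s to $\int B_1(x,y)\,E\,dx$ is again odd about $\{x_3=0\}$ at main order and hence enters only at the acceptable order $\lambda\tau\,o_k(1)|y|$. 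With all of this, the reduced system is \eqref{systLT}--\eqref{systLTn3} augmented by the single equatorial equation, all remainders being of strictly higher order in $1/k$, and a fixed-point argument as at the start of Section \ref{thm} produces $\ell,\ell_i,t_i$ annihilating every projection, whence $u=\textbf{u}_{2m+1}+\phi$ solves \eqref{prob}. The main obstacle I anticipate is precisely the bookkeeping of the mutual interaction between the $\mu$-scale and the $\la$-scale bubbles—in particular verifying that $U_j$ contributes no non-symmetric term of size comparable to the already small main order of $\oc_{i3}$—but it is settled by the oddness argument just described.
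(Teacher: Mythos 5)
Your proposal follows essentially the same route as the paper's (deliberately brief) proof of Theorem \ref{theo2}: the same splitting of $\phi$ into $\psi$, a single $y_3$-even correction per equatorial bubble projected only onto the dilation direction $Z_{n+1}$, and the $m$ doubling corrections projected onto $\oZ_{i3},\oZ_{i,n+1}$, with the symmetric/non-symmetric decompositions of Propositions \ref{estSimPart} and \ref{symEstPsi} used to control the delicate $\oZ_{i3}$ projections, leading to the same decoupled reduced system in $\ell,\ell_i,t_i$ solved by a fixed point. The only cosmetic imprecision is that the relevant oddness for the cross term involving the $U_j$'s is with respect to the hyperplane $x_3=\tau_i$ through $\oxi_{i1}$ (with the $U_j$ interaction estimated as the $\uU_j$ one, via the analogue of \eqref{due}), not $\{x_3=0\}$, but this does not affect the argument.
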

The solution in Theorem \ref{theo2} has the form
$$
u (y) = \textbf{u}_{2m+1} [\mu, \la, \tau ] (y) + \phi (y ), \quad \phi=  \psi + \sum_{j=1}^k [  \phi_j + \sum_{i=1}^m ( \bar \phi_{ij} + \underline \phi_{ij} )],
$$
with
\begin{equation}\label{casa}
\phi_j (\hat{y},y')=\phi_{1}(e^{2\pi\frac{ \textcolor{black}{(j-1)}}{k}i} \hat{y},y'), \quad \ophi_{ij}(\hat{y},y')=\ophi_{i1}(e^{2\pi\frac{ \textcolor{black}{(j-1)}}{k}i} \hat{y},y'),
\end{equation}
where $\hat{y}:=(y_1,y_2)$ and $y':=(y_3,\ldots,y_n)$. The functions $ \phi_j$,
$\ophi_{ij}$ and $\uphi_{ij}$ also satisfy
$$
\phi_{1}(y)=|y|^{2-n}\phi_{1}(|y|^{-2}y), \quad 
\ophi_{i1}(y)=|y|^{2-n}\ophi_{i1}(|y|^{-2}y),
$$
$$
\ophi_{i1}(y_1,\ldots,y_\alpha,\ldots,y_n)=\ophi_{i1}(y_1,\ldots,-y_\alpha,\ldots,y_n),\;\; \alpha=2,4,\ldots,n,
$$
and
$$
\uphi_{ij}(y)=\ophi_{ij}(y_1,y_2,-y_3,\ldots,y_n).
$$
Moreover
$$
\phi_j (y) =  \phi_j (y_1,y_2,-y_3,\ldots,y_n), \quad
(\ophi_{ij}+\uphi_{ij})(y)=(\ophi_{ij}+\uphi_{ij})(y_1,y_2,-y_3,\ldots,y_n).$$
Thanks to \eqref{casa}, it is enough to describe $ \phi_1$, $\overline{\phi}_{i1}$,  $i=1, \ldots , m$.  The functions $ \phi_1$, $\overline{\phi}_{i1}$ and $\psi$ solve the following system of coupled non linear equations
$$\begin{aligned}
\Delta\psi&+p\gamma U^{p-1}\psi+\left[p\gamma(|\textbf{u}_{2m+1}|^{p-1}-U^{p-1})\left(1-\sum_{j=1}^k \left[\zeta_j +\sum_{i=1}^m(\ozeta_{ij}+\uzeta_{ij})\right]\right)\right.\\
&\left.+p\gamma U^{p-1}\left[\sum_{j=1}^k [ \zeta_j + \sum_{i=1}^m(\ozeta_{ij}+\uzeta_{ij})\right]\right]\psi+ p\gamma |\textbf{u}_{2m+1}|^{p-1}\sum_{j=1}^k \sum_{i=1}^m (1-\ozeta_{ij})\ophi_{ij}\\
&+p\gamma |\textbf{u}_{2m+1}|^{p-1}\sum_{j=1}^k \sum_{i=1}^m (1-\uzeta_{ij})\uphi_{ij}
+p\gamma |\textbf{u}_{2m+1}|^{p-1}\sum_{j=1}^k  (1-\zeta_{j})\phi_{j} \\
&+\left(1-\sum_{j=1}^k \left[\zeta_j +\sum_{i=1}^m(\ozeta_{ij}+\uzeta_{ij})\right]\right)(E_{2m +1}+\gamma N(\phi))=0,
\end{aligned}$$
$$
\Delta \phi_{1}+p\gamma |{U}_{1}|^{p-1}\phi_{1}+\zeta_{1} E_{2m +1} +\gamma\mathcal{N}(\phi)=c_{n+1}U_{1}^{p-1}Z_{n+1},
$$
and, for $i=1, \ldots , m$,
$$
\Delta \ophi_{i1}+p\gamma |\overline{U}_{i1}|^{p-1}\ophi_{i1}+\ozeta_{i1} E_{2m+1} +\gamma\mathcal{N}_i(\phi)=c_{i3}\oU_{i1}^{p-1}\oZ_{i3}+c_{i,n+1}\oU_{i1}^{p-1}\oZ_{i,n+1}.
$$
Here
$$
E_{2m+1} (y) := \Delta \textbf{u}_{2m+1} + \gamma |\textbf{u}_{2m+1}|^{p-1} \textbf{u}_{2m+1}, \quad y \in \R^n,
$$
and
$
N(\phi):=|\textbf{u}_{2m+1}+\phi|^{p-1}(\textbf{u}_{2m+1}+\phi)-|\textbf{u}_{2m+1}|^{p-1}\textbf{u}_{2m+1}-p|\textbf{u}_{2m+1}|^{p-1}\phi $. For any $j$, 
$\zeta_j$,
$\overline \zeta_{ij}$ are defined as $\zeta_j$ in \eqref{ccut} with $\bar \xi_j$ replaced respectively by $\xi_j$ and $\bar \xi_{ij}$,
and $\underline \zeta_{ij}(y):=\overline{\zeta}_{ij}(y_1,y_2,-y_3,\ldots,y_n)$.
Moreover,
$$\mathcal{N}(\phi):=p(|\textbf{u}_{2m+1}|^{p-1}\zeta_1-|\overline{U}_{1}|^{p-1})\phi_{1}+\zeta_1\left[p|\textbf{u}_{2m+1}|^{p-1}\psi
+N(\phi)\right],$$
$$\mathcal{N}_i(\phi):=p(|\textbf{u}_{2m+1 }|^{p-1}\ozeta_{i1}-|\overline{U}_{i1}|^{p-1})\ophi_{i1}+\ozeta_{i1}\left[p|\textbf{u}_{2m+1}|^{p-1}\psi
+N(\phi)\right],$$
$$
 c_{ n+1}:=\frac{\int_{\R^n}(\zeta_{1} E_{2m+1} +\gamma\mathcal{N} (\phi))Z_{ n+1}}{\int_{\R^n}U_{1}^{p-1}Z^2_{n+1}},
$$
and
$$
c_{i3}:=\frac{\int_{\R^n}(\ozeta_{i1} E_{2m+1}+\gamma\mathcal{N}_i(\phi))\oZ_{i3}}{\int_{\R^n}\oU_{i1}^{p-1}\oZ^2_{i3}},\qquad c_{i, n+1}:=\frac{\int_{\R^n}(\ozeta_{i1} E_{2m+1} +\gamma\mathcal{N}_i (\phi))\oZ_{i, n+1}}{\int_{\R^n}\oU_{i1}^{p-1}\oZ^2_{i,n+1}}.
$$
where
$$ Z_\alpha (y) := \mu^{-{n-2 \over 2}} Z_\alpha  \left(\frac{y-\xi_{1}}{\mu}\right), \quad \overline{Z}_{i\alpha}(y):=\lambda^{-\frac{n-2}{2}}Z_\alpha\left(\frac{y-\oxi_{i1}}{\lambda}\right),\;\;\alpha=3, n+1 .$$
It can be proved that
$$\begin{aligned}
\|\psi\|_*&\leq C\left(\sum_{i=1}^m \|\overline{\ophi}_{i1}\|_*+k^{1-\frac{n}{q}}\right)\;\;\mbox{if }n\geq 4,\quad
\|\psi\|_*\leq C\left( \sum_{i=1}^m \|\overline{\ophi}_{i1}\|_*+(\ln k)^{-1}\right)\;\;\mbox{if }n=3,
\end{aligned}
$$
$$\|\oophi_1\|_*\leq C k^{-\frac{n}{q}}\mbox{ if }n\geq 4,\qquad \|\oophi_1\|_*\leq \frac{C}{k\ln k}\mbox{ if }n=3,$$
$$\|\oophi_{i1}\|_*\leq C k^{-\frac{n}{q}}\mbox{ if }n\geq 4,\qquad \|\oophi_{i1}\|_*\leq \frac{C}{k\ln k}\mbox{ if }n=3,$$
where $\oophi_{1}(y):=\mu^\frac{n-2}{2}\phi_{1}(\xi_{1}+\mu y)$ and $\oophi_{i1}(y):=\lambda^\frac{n-2}{2}\ophi_{i1}(\oxi_{i1}+\lambda y)$, and the corresponding estimates on their non symmetric part.

In order to complete the proof of Theorem \ref{theo2}, we need to find
 positive parameters $\ell, \ell_1 , \ldots \ell_m$ and $t_1 , \ldots , t_m$   entering in the definition of $\mu$, $\lambda$ and $\tau$ in \eqref{par1nnn}
 so that for all $i=1, \ldots , m$
 \begin{equation}\label{sisnnn}
  c_{n+1} (\bar \ell, t)  = c_{i3} (\bar \ell, t) = c_{i, n+1} (\bar \ell , t) = 0, \quad \bar \ell = (\ell, \ell_1 , \ldots \ell_m), \quad t= ( t_1 , \ldots , t_m ).
 \end{equation}
 In dimension $n\geq 4$, this system decouples at main order and becomes
 $$\begin{aligned}
 &a_n \ell^2-1+\frac{1}{k^{2 {n-3 \over n-1}}}\Theta_{n+1, k}(\ell, \ell_1 , \ldots \ell_m ,t_1 , \ldots , t_m ) =0, \\
&a_n \ell_i^2 -1+\frac{1}{k^{2 {n-3 \over n-1}}}\Theta_{i, n+1, k}(\ell, \ell_1 , \ldots \ell_m ,t_1 , \ldots , t_m ) =0, \quad i=1, \ldots , m, \\
&b_n{ \ell_i \over t_i^{n-1} }-1+\frac{1}{k^{n-3 \over n-1} }\Theta_{i, 3, k}(\ell, \ell_1 , \ldots \ell_m ,t_1 , \ldots , t_m)=0, \quad i=1, \ldots , m,
\end{aligned}$$
where $a_n$, $b_n$ are positive constants, and $\Theta_{n+1, k}$, $\Theta_{i, n+1, k}$, $\Theta_{i, 3, k}$ are smooth functions of their argument, which are uniformly bounded, together with their first derivatives, as $k \to \infty$.

In dimension $n=3$, system \eqref{sisnnn} decouples and becomes
 $$\begin{aligned}
 &a_3 \ell^2-1+\frac{\ln \ln k }{\ln k }\Theta_{4, k}(\ell, \ell_1 , \ldots \ell_m ,t_1 , \ldots , t_m ) =0,  \\
&a_3 \ell_i^2-1+\frac{\ln \ln k }{\ln k }\Theta_{i, 4, k}(\ell, \ell_1 , \ldots \ell_m ,t_1 , \ldots , t_m ) =0, \quad i=1, \ldots , m, \\
&b_3{ \ell_i \over t_i^{2} }-1+\frac{\ln \ln k }{\ln k } \Theta_{i, 3, k}(\ell, \ell_1 , \ldots \ell_m ,t_1 , \ldots , t_m)=0, \quad i=1, \ldots , m,
\end{aligned}$$
where $a_3$, $b_3$ are positive constants, and $\Theta_{4, k}$, $\Theta_{i,4,k} $ $\Theta_{3, k}$ are smooth functions of their argument, which are uniformly bounded, together with their first derivatives, as $k \to \infty$.
A fixed point argument gives the existence of $\ell$ and $t$ solutions to \eqref{sisnnn}.

\section*{Appendix A: Some useful computations}

\noindent
{\bf Proof of \eqref{uno} and \eqref{due}.} By definition
$$
\sum_{j=2}^k {1\over |\oxi_1 - \oxi_j|^{n-2}} = {1\over R^{n-2} (1-\tau^2)^{n-2 \over 2}} \, \left(\sum_{j=2}^k {1\over [2 \, (1-\cos \theta_j)]^{n-2 \over 2} } \right) , \quad \theta_j = 2\pi {j-1 \over k}.
$$
Using the symmetry of the construction we have 
\begin{equation*}\begin{split}
\sum_{j=2}^k {1\over [2(1-\cos \theta_j)]^{n-2 \over 2} }&= \begin{cases}
\displaystyle 2\sum_{j=2}^{\frac{k}{2}} {1\over [2(1-\cos \theta_j)]^{n-2 \over 2}} +\frac{1}{4^{\frac{n-2}{2}}}\quad \mbox{ if $k$ even},\\
 \displaystyle2\sum_{j=2}^{\frac{k-1}{2}} {1\over [2(1-\cos \theta_j)]^{n-2 \over 2}}\quad \mbox{ if $k$ odd},
 \end{cases}
\end{split}
\end{equation*}
where $0<\theta_j<\pi$. Hence,
$$2(1-\cos\theta_j)=\theta_j^2\left[1-\cos(\eta_j)\frac{\theta_j^2}{12}\right]\quad \mbox{for some }0\leq \eta_j\leq \theta_j,$$
and
$$1-\frac{\pi^2}{12}\leq 1-\cos(\eta_j)\frac{\theta_j^2}{12}\leq 1+\frac{\pi^2}{12}.$$
Then, for $k $ large and odd, we get
\begin{equation*}\begin{split}
\sum_{j=2}^k {1\over [2(1-\cos \theta_j)]^{n-2 \over 2} }&= 2\sum_{j=2}^{\frac{k-1}{2}}\frac{1}{\theta_j^{n-2}}+2\sum_{j=2}^{\frac{k-1}{2}}\frac{1}{\theta_j^{n-2}}\frac{1-\left[1-\cos(\eta_j)\frac{\theta_j^2}{12}\right]^{\frac{n-2}{2}}}{\left[1-\cos(\eta_j)\frac{\theta_j^2}{12}\right]^{\frac{n-2}{2}}}\\
&=
\begin{cases}k^{n-2}   A_n \left(1+  O( \sigma_k ) \right) \quad
{\mbox {if}} \quad n \geq 4,\\
  k  \ln k  A_3 \left(1+ O(\sigma_k ) \right) \quad {\mbox {if}} \quad n =3,
  \end{cases}
\end{split}
\end{equation*}
where
$$A_n:=  {2\over (2\pi)^{n-2}} \, \sum_{j=1}^\infty j^{2-n},\;A_3 := \pi^{-1}\;\hbox{ and }\;
\sigma_k := \begin{cases} \begin{split}
k^{-2} & \quad  {\mbox {if}} \quad n>5, \\
k^{-2} \ln k & \quad  {\mbox {if}} \quad n=5, \\
k^{-1} & \quad {\mbox {if}} \quad n=4,\\
(\ln k)^{-1} & \quad {\mbox {if}} \quad n=3.
\end{split}
\end{cases}
$$
The case of $k$ even can be analogously treated, and therefore \eqref{uno} follows.

In the same spirit we also observe that, for $n\geq 4$ and $k$ odd, we have
\begin{equation*}\begin{split}
\sum_{j=1}^k {1\over |\oxi_1 -  \uxi_j|^{n-2}} &= {2\over R^{n-2} } \sum_{j=1}^{\frac{k-1}{2}} {1\over [4 \tau^2 + 2 (1-\tau^2) (1-\cos \theta_j ) ]^{n-2 \over 2} }\\
&= {2\over (2R)^{n-2} \tau^{n-2 }  } \left(\sum_{j=1}^{\frac{k-1}{2}}   {1\over [1  + \pi^2 {1-\tau^2 \over \tau^2} ({j-1 \over k} )^2]^{n-2 \over 2} } \right) (1+ O(\tau^2) ),
\end{split}
\end{equation*}
and for $n=3$,
\begin{equation*}\begin{split}
\sum_{j=1}^k {1\over |\oxi_1 -  \uxi_j|}
&= {1\over R \tau  } \left(\sum_{j=1}^{\frac{k-1}{2}}   {1\over [1  + \pi^2 {1-\tau^2 \over \tau^2} ({j-1 \over k} )^2]^{n-2 \over 2} } \right) (1+ O(|\ln \tau|^{-1}) ).
\end{split}
\end{equation*}
If $n \geq 4$, using  \eqref{par1} we get
\begin{equation}
\begin{split}\label{e3}
\sum_{j=1}^{\frac{k-1}{2}}   {1\over [1  + \pi^2 {1-\tau^2 \over \tau^2} ({j-1 \over k} )^2]^{n-2 \over 2} } &=  \left(\int_0^{\frac{k-3}{2}} {dx \over (1 + \pi^2 {1-\tau^2 \over \tau^2} ({x \over k} )^2)^{n-2 \over 2}  } \right) \, (1+ O ((k\tau )^{-2})) \\
&=  {k \tau \over   \pi \, \sqrt{1-\tau^2} } \left( \int_0^{\pi {\sqrt{1-\tau^2} \over \tau}  {{k-3} \over{2 k}} } {ds \over (1+ s^2)^{n-2 \over 2} }\right)  \, (1+ O ((k\tau )^{-2}) )\\
&= {k \tau \over   \pi \, \sqrt{1-\tau^2} }  \left( \int_0^{\infty} {ds \over (1+ s^2)^{n-2 \over 2} }\right)  \, (1+ O ((k\tau )^{-2})+ O(\tau^{n-3} ) ).
\end{split}
\end{equation}
If $n=3$, 
\begin{equation}
\begin{split}\label{e4}
\sum_{j=1}^{\frac{k-1}{2}}   &{1\over [1  + \pi^2 {1-\tau^2 \over \tau^2} ({j-1 \over k} )^2]^{1 \over 2} } =
{k \tau \over   \pi \, \sqrt{1-\tau^2} } \left( \int_0^{\pi {\sqrt{1-\tau^2} \over \tau}  {k-3 \over 2k} } {ds \over (1+ s^2)^{1 \over 2} }\right)  \, (1+ O ((k\tau )^{-2}) )\\
&\quad=  {k \tau \over   \pi \, \sqrt{1-\tau^2} } \left( \ln ( \pi {\sqrt{1-\tau^2} \over \tau}  {k-3 \over 2k} + \sqrt{1+  \pi^2 {1-\tau^2 \over \tau^2 }  \left({k-3 \over 2k}\right)^2 } ) \right)  \, (1+ O ((k\tau )^{-2}) )\\
&\quad = {k \tau \over   \pi \, \sqrt{1-\tau^2} } \ln \left( {\pi \over \tau} \right) \,  (1+ O(\tau^2 |\ln \tau |^{-1}  )).
\end{split}
\end{equation}
Combining \eqref{e3} and \eqref{e4} we obtain the validity of \eqref{due} for $k$ odd. The even case follows in the same way.

Analogous computations provide
\begin{equation}\begin{split} \label{tre}
\sum_{j=1}^k {1\over |\oxi_1 - \uxi_j|^{n}} =
\begin{cases}  C_n \, {k \over \tau^{n-1}} \, \left(1+  O( (\tau k)^{-2}) \right) \quad
{\mbox {if}} \quad n \geq 5,\\
C_n \, {k \over \tau^{n-1}} \, \left(1+  O( \tau^{n-1}) \right) \quad
{\mbox {if}} \quad n =3,4,
\end{cases}
\quad C_n := {2\over 2^{n} \pi}  \int_0^\infty {ds \over (1+ s^2)^{n \over 2}}.
\end{split}
\end{equation}

\section*{Appendix B: linear independence of the functions $z_j(y)$.}

We give here the proof of the linear independence of the functions $z_j(y)$, $j = 0,\ldots, 4n-3$, defined in Remark \ref{li}, in the case $k$ even. Indeed, we claim that, if there are  $4n-2$ constants $c_j$ with
\begin{equation}\label{unoli}
\sum_{j=0}^{4n-3} c_j z_j (y) = 0 , \quad \forall y \in \R^n,
\end{equation}
then $c_j = 0 $ for all $j$.	The result will follow from evaluating this expression in different points, properly chosen. Notice that, by definition, \eqref{unoli} can be written as
\begin{equation}\begin{split}\label{uno_bis}
&\left[c_0-2y_1c_{n+1}-2y_2c_{n+2}-2y_3c_{n+3}\right]z_0(y)+\left[c_1+|y|^2c_{n+1}-\sum_{\alpha=2}^ny_\alpha c_{n+\alpha+2}\right]z_1(y)\\
&\quad+\left[c_2+|y|^2c_{n+2}+c_{n+4}y_1-\sum_{\alpha=3}^nc_{2n+\alpha}y_\alpha\right]z_2(y)\\
&\quad+\left[c_3+|y|^2c_{n+3}+c_{n+5}y_1+c_{2n+3}y_2-\sum_{\alpha=4}^nc_{3n+\alpha-3}y_\alpha\right]z_3(y)\\
&\quad+\sum_{l=4}^n\left[c_l+c_{n+l+2}y_1+c_{2n+l}y_2+c_{3n+l-3}y_3\right]z_l(y)=0, \quad \forall y \in \R^n.
\end{split}\end{equation}
Since our solution $u $ satisfies
$$
u(y_1 ,  \ldots , y_j , \ldots, y_n) = u(y_1 ,  \ldots ,- y_j , \ldots, y_n), \quad \forall j=1, \ldots n,
$$
we have that necessarily, for every $j=1, \ldots n$,
$$
z_j (q) = 0 \quad \forall q= (q_1 , \ldots , q_n) \mbox{ with } q_j =0.
$$
Take a point of the form $q=(r, 0,...,0)$, with $r >0$. Thus, evaluating \eqref{uno_bis} at $q$ one gets\begin{equation}\label{xi1}
	c_0z_0(q)+c_1 z_1(q)+c_{n+1}(-2r z_0(q)+r^2z_1(q))=0.
\end{equation}
Given the decay of the functions $z_0$ and $z_1$, there are three constants $a$, $b$ and $c$ such that
\begin{equation}\label{du}
\begin{aligned}
	r^{n-2} z_0(q)&\sim -{n-2 \over 2} 2^{n-2 \over 2} a , \quad r^{n-1} z_1(q)\sim -{n-2 \over 2} 2^{n-2 \over 2} b ,\\
	r^{n-3} &\left(-2r z_0(q)+r^2z_1(q)\right) \sim -{n-2 \over 2} 2^{n-2 \over 2} c, \quad {\mbox {as}} \quad r \to \infty.
\end{aligned}
\end{equation}
The constants $a$, $b$ and $c$ can be computed explicitly
\begin{align*}
	a&= (1+ o_k (1) ) , \quad
	b= 2 (1+ o_k (1) ),\\
	c&= 4 k \lambda^{n-2 \over 2}  (1+ o_k (1) ),
\end{align*}
where $\lim_{k\to \infty} o_k (1) =0$. 
Evaluating \eqref{xi1} at three points $(r_1,0,...,0)$, $(r_2,0,...,0)$, $(r_3,0,...,0)$, with $r_i$ large, we arrive at a system which at main order looks like
$$\left(
\begin{array}{ccc}
	a r_1 & b & c r_1^2\\
	a r_2 & b & c r_2^2\\
	a r_3 & b & c r_3^3
\end{array}
\right)
\left(
\begin{array}{c} 
	c_0 \\ c_1 \\ c_{n+1}
\end{array}
\right)
=
\left(
\begin{array}{c} 
	0 \\ 0 \\ 0
\end{array}
\right).$$
Choosing properly $r_1\neq r_2\neq r_3$ it can be seen that the determinant of the matrix is not zero and thus necessarily
$$c_0=c_1=c_{n+1}=0.$$
Let us take now $q=(q_1, q_2,0,...,0)$. From \eqref{uno_bis} we obtain 
\begin{equation}\label{uu}c_2z_2(q)+c_{n+2}(-2q_2 z_0(q)+z_2(q)|q|^2)+c_{n+4}(-q_2z_1(q) + q_1 z_2 (q) )=0.
\end{equation}
If $q_2=0$, then $-q_2z_1(q) + q_1 z_2 (q) =0.$
Since $u (y)$ is invariant under rotations of angle ${2\pi \over k }$ in the $(y_1,y_2)$-plane, we have that $-q_2z_1(q) + q_1 z_2 (q) =0$ for all points of the form
$q= (e^{{2\pi \over k} (j-1) i} \tilde{r},0,0, 0 , ..)$, with $\tilde{r}=(r,0)$, for some  $j=2, \ldots , k$. Evaluating \eqref{uu} at these points $q$, we get 
$$
c_2z_2(q)+c_{n+2}(-2q_2 z_0(q)+z_2(q)|q|^2)=0.
$$
Replacing in the above equation the points $q_1= (e^{{2\pi \over k} (j-1) i} \tilde{r}_1,0,0, 0 , ..)$, $q_2= (e^{{2\pi \over k} (j-1) i} \tilde{r}_2,0,0, 0 , ..)$, with $\tilde{r}_1=(r_1,0)$, $\tilde{r}_2=(r_2,0)$, $r_1\not= r_2$, we get a $2\times 2$ system in $c_2$ and $c_{n+2}$. Arguing as before and choosing $r_1$, $r_2$ properly, we obtain  $c_2=c_{n+2}=0$. Thus so far we have proven that
$$
c_0=c_1=c_{n+1}=c_2=c_{n+2}=0.
$$


\medskip
Next we evaluate \eqref{uno_bis} at points of the form $q=(q_1,0,q_3,0,...,0)$ we deduce
$$c_3z_3(q)+c_{n+3}(-2q_3z_0(q)+|q|^2z_3(q))+c_{n+5}(q_1 z_3 (q) - q_3z_1(q))=0.$$
Taking now $\bar q = (-q_1,0,-q_3,0,...,0)$ and using that $z_0(\bar q)=z_0(q)$, $z_1(\bar q)= -z_1 (q)$, $z_3 (\bar q) = - z_3 (q)$, we obtain
$$c_3z_3(q)+c_{n+3}(-2q_3z_0(q)+|q|^2z_3(q))=0.$$
Similar decay rates as in \eqref{du} allow us to choose two different points of the form $q=(q_1,0,q_3,0,...,0)$ that inserted in the above equation produce an invertible $2\times 2$ system. This gives $c_3=c_{n+3}=0$.

\medskip
Evaluate \eqref{uno_bis} at $q=(q_1,q_2, 0,0,0..0)$ to get
$$
c_{n+4} [ q_2 z_1 (q) - q_2 z_2(q) ]=0.
$$
Since our solution $u$ is not radially symmetric in the $(y_1, y_2)$-plane, the function $y_2 z_1(y) - y_1 z_2 (y)$ is not identically zero. Thus we get $c_{n+4}=0$. Arguing similarly, we prove that $c_{n+5}=c_{2n+3}=0$ (we use \eqref{uno_bis} for $q=(q_1,0,q_3, 0,0,..)$ and $q=(0,q_2,q_3,0,0,...0)$ respectively, and the fact that the solution is not radially symmetric in the $(y_1,y_3)$ and $(y_2,y_3)$ planes).

\medskip
Let now $\ell =4, \ldots, n$ and evaluate \eqref{uno_bis} at points of the form $q=(q_1, 0, .., 0 , q_\ell , ...)$ We obtain
$$
c_\ell z_\ell (q) + c_{2n+\ell} (-q_\ell z_1 (q) + q_1 z_\ell (q)) =0.
$$
Since $u$ is not radially symmetric in the $(y_1,y_\ell)$-plane, the function 
$y \to -y_\ell z_1 (y) + y_1 z_\ell (y)$ is not identically zero. Choose now two different points of the form $q=(q_1,0,q_\ell,0,...,0)$ that inserted in the above equation produce an invertible $2\times 2$ system. This gives $c_\ell=c_{2n+\ell}=0$, for $\ell =4,\ldots,n$.

Hence, so far we have proven that
$$
c_0=c_1=.....=c_{3n}=0.
$$
It remains to see that
$$
\sum_{j=4}^n c_{3n+j-3} [ y_3 z_j(y) - y_j z_3 (y) ] = 0 \quad \forall y,
$$
implies $c_{3n+4}= \ldots = c_{4n-3} = 0$. Let us evaluate this linear combination at the points  $ q_{3,j} = (0,0, q_3, 0,..., q_j , 0, ...)$, for $j =4, \ldots , n$. We get 
$$
c_{3n+j -3 } (q_3 z_j (q_{3,j} ) - q_j z_j (q_{3,j} )) = 0.
$$
Since $u$ is not radially symmetric in the $(y_3, y_j)$-plane, we can choose $q_{3,j}$ to show that 
 $c_{3n+j - 3} = 0 $ for all $j=4,\ldots,n$. We then conclude that \eqref{uno_bis} implies that
 $$
 c_j=0, \quad {\mbox {for all}} \quad j=0,1, \ldots , 4n-3.
 $$

\end{document}